\def\thesection{\arabic{section}}
\def\theequation{\thesection.\arabic{equation}}
\newcommand{\ds} {\displaystyle}
\newcommand{\e}{\epsilon}
\newcommand{\al} {\alpha}
\newcommand{\ba} {\beta}
\newcommand{\Ga} {\Gamma}
\newcommand{\Om} {\Omega}
\newcommand{\ra} {\rightarrow}
\newcommand{\De} {\Delta}
\newcommand{\la} {\lambda}
\newcommand{\La} {\Lambda}
\newcommand{\noi} {\noindent}
\newcommand{\mb} {\mathbb}
\newcommand{\mc} {\mathcal}
\newcommand{\ld} {\langle}
\newcommand{\rd} {\rangle}
\newcommand{\io} {\int_{\Om}}
\newcommand{\h}{Y}
\newcommand{\aw}{K_{\la,\mu}(u,w)}
\newcommand{\bw}{b(x)u_{+}^{\al}w_{+}^{\ba} dx}
\newcommand{\ak}{K_{\la,\mu}(u_k,w_k) }
\newcommand{\bk}{b(x)(u_k)_{+}^{\al}(w_{k})_{+}^{\ba}}
\newcommand{\fg}{\la \int_{\Om} f(x)|u|^{1-q} dx+\mu \int_{\Om}g(x)|w|^{1-q} dx}
\newcommand{\uw}{\|(u,w)\|^2}
\newcommand{\uk}{\|(u_k,w_k)\|^2}
\newcommand{\n}{\mc N_{\la,\mu}}
\newcommand{\ee}{(u,w)}
\def\theequation{\@arabic{\c@section}.\@arabic{\c@equation}}
\def\QED{\hfill {$\square$}\goodbreak \medskip}
\newtheorem{Theorem}{Theorem}[section]
\newtheorem{Lemma}[Theorem]{Lemma}
\newtheorem{Corollary}[Theorem]{Corollary}
\newtheorem{Definition}[Theorem]{Definition}
\begin{document}
{\vspace{0.01in}}

\title
{Multiplicity results of fractional-Laplace system with sign-changing and singular nonlinearity}

\author{
{\bf  Sarika Goyal\footnote{email: sarika1.iitd@gmail.com}} \\
{\small Department of Mathematics}, \\{\small Indian Institute of Technology Delhi}\\
{\small Hauz Khas}, {\small New Delhi-16, India.}\\
 }
\date{}

\maketitle

\begin{abstract}

In this article, we study the following fractional-Laplacian system with singular nonlinearity
 \begin{equation*}
 (P_{\la,\mu}) \left\{
\begin{array}{lr}
(-\De)^s u =  \la f(x) u^{-q}+  \frac{\al}{\al+\ba}b(x) u^{\al-1} w^\ba\; \text{in}\;
\Om \\
(-\De)^s w =  \mu g(x) w^{-q}+ \frac{\ba}{\al+\ba} b(x) u^{\al} w^{\ba-1}\; \text{in}\;
\Om \\
 \quad \quad u, w>0\;\text{in}\;\Om, \quad u = w = 0 \; \mbox{in}\; \mb R^n \setminus\Om,\\
\end{array}
\quad \right.
\end{equation*}
where $\Om$ is a bounded domain in $\mb R^n$ with smooth
boundary $\partial \Om$, $n>2s$, $s\in(0,1)$, $0<q<1$, $\al>1$, $\ba>1$ satisfy $2<\al+\ba< 2_{s}^*-1$ with $2_{s}^*=\frac{2n}{n-2s}$, the pair of parameters $(\la,\mu)\in \mb R^2\setminus\{(0,0)\}$. The weight functions $f,g: \Om\subset\mb R^n \ra \mb R$ such that $0<f$, $g\in L^{\frac{\al+\ba}{\al+\ba-1+q}}(\Om)$, and $b:\Om\subset\mb R^n \ra \mb R$ is a sign-changing function such that $b(x)\in L^{\infty}(\Om)$. Using variational methods, we show existence and
multiplicity of positive solutions of $(P_{\la,\mu})$ with respect to the pair of parameters $(\la,\mu)$.
\medskip

\noi \textbf{Key words:} Fractional Laplacian system, singular nonlinearity, sign-changing weight function, Variational methods.

\medskip

\noi \textit{2010 Mathematics Subject Classification:} 35A15, 35J75, 35R11.
\end{abstract}

\bigskip
\vfill\eject

\section{Introduction}

\setcounter{equation}{0} Let $s\in (0,1)$ and let $0\in\Om\subset \mb
R^n$ is a bounded domain with smooth boundary, $n>2s$. Then we
consider the following fractional system with singular nonlinearity:
 \begin{equation*}
 (P_{\la,\mu}) \left\{
\begin{array}{lr}
(-\De)^s u =  \la f(x) u^{-q}+  \frac{\al}{\al+\ba}b(x) u^{\al-1}w^\ba\; \text{in}\;
\Om \\
(-\De)^s w =  \mu g(x) w^{-q}+ \frac{\ba}{\al+\ba} b(x) u^{\al} w^{\ba-1}\; \text{in}\;
\Om \\
 \quad \quad u, w>0\;\text{in}\;\Om, \quad u = w = 0 \; \mbox{in}\; \mb R^n \setminus\Om.\\
\end{array}
\quad \right.
\end{equation*}
Here, $(-\De)^{s}$ is the fractional Laplacian operator
defined as
\begin{equation*}
(-\De)^{s} u(x)= -\frac{1}{2} \int_{\mb
R^n}\frac{u(x+y)+u(x-y)-2u(x)}{|y|^{n+2s}} dy \;\text{for all} \;
x\in \mb R^n.
\end{equation*}
We assume the following assumptions on $f$ and $g$:
 \begin{enumerate}
  \item[(a1)] $f,g:   \Om\subset\mb R^n \ra \mb R$ such that $0< f,$ $g\in L^{q^*}(\Om)$, where $q^*=\frac{\al+\ba}{\al+\ba-1+q}$.
   \item[(b1)] $b:\Om\subset\mb R^n \ra \mb R$ is a sign-changing function such that $b^{+}=\max\{f,0\}\not\equiv 0$ and $b(x)\in L^{\infty}(\Om)$.
\end{enumerate}
Also the pair of parameters $(\la,\mu)\in \mb R^2\setminus\{(0,0)\}$, $0<q <1$ and $\al>1$, $\ba>1$ satisfy $2<\al+\ba<2^{*}_{s}-1$, with $2^{*}_{s}= \frac{2n}{n-2s}$, known as fractional critical Sobolev exponent.

\noindent In this work, we prove the existence of multiple non-negative solutions for a system of fractional operator with
singular and sign changing nonlinearity by studying the nature of Nehari manifold with respect to the parameter $\lambda$ and $\mu$. These same result can be easily extended to $p-$fractional Laplacian operator $(-\De)_{p}^s$, defined as
\begin{align*}
  (-\De)_{p}^s =& 2 \lim_{\e\ra 0} \int_{\mb R^n \setminus B_{\e}(x)} \frac{|u(y)-u(x)|^{p-2}(u(y)-u(x))}{|x-y|^{n+ps}}dy .
\end{align*}
This definition is consistent, up to a normalization constant depending on $n,$ $s$, with linear Laplacian fractional $(-\De)^s$, for the case $p=2$.

\noindent The natural space to look for solutions of the problem $(P_{\lambda,\mu})$ is the product space
$W^{s,p}_{0}(\Omega)\times W^{s,p}_{0}(\Omega)$. In order to study $(P_{\lambda,\mu})$, it is important to encode the `boundary condition'
$u=v=0$ in $\mathbb{R}^n\setminus\Omega$ in the weak formulation. Servadei and Valdinoci in \cite{mp} have  introduced
the new function spaces  to study the variational functionals related to {the} fractional Laplacian by observing the interaction between $\Omega$ and
$\mathbb{R}^n\setminus\Omega$.

\noindent For $u=v$, $\alpha=\beta$, $\al+\ba=r$, $\lambda=\mu$ and $f=g$, the problem $(P_{\lambda,\mu})$ reduces  to the following fractional equation with singular nonlinearities
\begin{equation*}
 (P_{\la}) \left\{
\begin{array}{lr}
(-\De)_{p}^s =  f(x) w^{-q}+ \la b(x) w^r\; \text{in}\;
\Om , \quad \quad w>0\;\text{in}\;\Om, \quad w = 0 \; \mbox{in}\; \mb R^n \setminus\Om,\\
\end{array}
\quad \right.
\end{equation*}
In \cite{cv}, the author studied the existence and multiplicity of non-negative solutions to problem $(P_{\la})$ for sign changing and singular nonlinearity. In the scalar case the problems involving the fractional operator  with singular nonlinearity have been studied by many authors, see \cite{tu} and references therein.

\noi The fractional power of Laplacian is the infinitesimal generator of L\'{e}vy stable diffusion process and arise in anomalous diffusions in plasma, population dynamics, geophysical fluid dynamics, flames propagation, chemical reactions in liquids and American options in finance. For more details, one can see \cite{da, gm} and reference therein. Recently the fractional elliptic equation attracts a lot of interest in nonlinear analysis such as in \cite{cs, mp,var,bn1,bn}. Caffarelli and Silvestre \cite{cs} gave a new formulation of fractional Laplacian through Dirichlet-Neumann maps. This is commonly used in the literature since it allows us to write a nonlocal problem to a local problem which allow us to use the variational methods to study the existence and uniqueness.

\noi On the other hand, the fractional elliptic problem have been investigated by many authors, for example, \cite{mp, var} for subcritical case, \cite{bn1, bn} for critical case with polynomial type nonlinearities. Moreover, by Nehari manifold and fibering maps, the author obtained the existence of multiple solutions for fractional equations for critical \cite{zlj} and subcritical case \cite{ssa,ssi} and reference therein. In case of square root of Laplacian, existence and multiplicity results for sublinear and superlinear type of nonlinearity with sign-changing
weight functions is studied in \cite{xy}. In \cite{xy}, author used the idea of Caffarelli and Silvestre
\cite{cs}, which gives a formulation of the fractional Laplacian through Dirichlet-Neumann maps. Also in case of fractional $p$-Laplacian, existence and multiplicity results for polynomial type nonlinearities is studied by many authors see \cite{ssa,ssi,ls,lms,ms} and reference therein. Also eigenvalue problem related to $p-$fractional Laplacian is studied in \cite{gf, ll}.

\noi For $s=1$, the paper by Crandall, Robinowitz and Tartar \cite{cr} is the starting point on semilinear problem with singular nonlinearity. There is a large literature on singular nonlinearity see \cite{AJ, Am, cr, cp,dh, dhr, gr, gr1, JT, hss, hnc, lm,lmp, J1, j2, jh} and reference therein. In \cite{yc}, Chen showed the existence and multiplicity of the following problem

\begin{equation*}
  \left\{
\begin{array}{lr}
-\De w -\frac{\la}{|x|^2}w=  \frac{f(x)}{w^q} + \mu g(x) w^p \; \text{in}\;
\Om\setminus\{0\} \\
 \quad w>0\;\text{in}\;\Om\setminus\{0\}, \quad w = 0 \; \mbox{in}\; \partial \Om.\\
\end{array}
\quad \right.
\end{equation*}
where $0\in \Om$ is a bounded smooth domain of $\mb R^n$ with smooth boundary, $0<\la<\frac{(n-2)^2}{4}$, $0<q<1<p<\frac{n+2}{n-2}$, $f(x)>0$ and $g$ is sign-changing continuous function.

%
\noi To the best of our knowledge, there is no work related to system of fractional Laplacian with singular and sign-changing nonlinearity. In this work, we studied the multiplicity results for the system of fractional Laplacian equation with singular nonlinearity and sign-changing weight function with respect to the parameter $\la,$ $\mu$. This work is motivated by the work of Chen and Chen in \cite{yc}. But one can not directly extend all the results for fractional $p-$Laplacian, due to the non-local behavior of the operator and the bounded support of the test function is not preserved. Also due to the singularity of the problem, the associated functional is not differentiable in the sense of G\^{a}teaux.  The results obtained here are somehow expected but we show how the results arise out of nature of the Nehari manifold.

\noi The paper is organized as follows: Section 2 is devoted to some preliminaries and notations. we also state our main results. In section 3, we study the decomposition of Nehari manifold and the associated energy functional is bounded below and coercive. Section 3 contains the existence of a nontrivial solutions in $\mc N_{\la,\mu}^{+}$ and $\mc N_{\la,\mu}^{-}$.

\noi We will use the following notation throughout this paper: $\|f\|_{q^*}$, $\|g\|_{q^*}$ denote the norm in $L^{\frac{\al+\ba}{\al+\ba-1+q}}(\Om)$.
\section{Preliminaries:}
In this section we give some definitions and functional settings. At the end of this section, we state our main results. For this we define $H^{s}(\Om)$, the usual fractional Sobolev
space $H^{s}(\Om):= \left\{w\in L^{2}(\Om); \frac{(w(x)-w(y))}{|x-y|^{\frac{n}{2}+s}}\in L^{2}(\Om\times\Om)\right\}$ endowed with the norm
\begin{align}\label{2}
\|w\|_{H^{s}(\Om)}=\|w\|_{L^2(\Om)}+ \left(\int_{\Om\times\Om}
\frac{|w(x)-w(y)|^{2}}{|x-y|^{n+2s}}dxdy \right)^{\frac 12}.
\end{align}
To study fractional Sobolev space in details we refer \cite{hi}.

\noi Due to the non-localness of the operator, we define linear space as follows:
{\small\[X_0= \left\{w|\;w:\mb R^n \ra\mb R \;\text{is measurable},
w|_{\Om} \in L^p(\Om)\;
 and\;  \frac{w(x)- w(y)}{|x-y|^{\frac{n+2s}{2}}}\in
L^2(Q);  w = 0 \;\text{a.e. in}\; \mb R^n\setminus \Om\right\}\]}

\noi where $Q=\mb R^{2n}\setminus(\mc C\Om\times \mc C\Om)$ and
 $\mc C\Om := \mb R^n\setminus\Om$. The space $X_0$ was firstly introduced by Servadei and Valdinoci \cite{mp}. The space $X_0$ endowed with the norm
\begin{align}\label{01}
 \|w\|=\left(\int_{Q}\frac{|w(x)-w(y)|^{2}}{|x-y|^{n+2s}}dx dy\right)^{\frac12}
\end{align}
is a Hilbert space.
We notice that, the norms in \eqref{2} and \eqref{01} are not same because $\Om\times
\Om$ is strictly contained in $Q$. 
Let $Y= X_0\times X_0$ be the cartesian product of two reflexive Banach spaces, which is also reflexive Banach space with the norm
\[\|(u,w)\|=(\|u\|_{X_0}^{2}+\|w\|_{X_0}^{2})^{\frac 12}= \left(\int_{Q}\frac{|u(x)-u(y)|^{2}}{|x-y|^{n+2s}}dx dy +\int_{Q}\frac{|w(x)- w(y)|^{2}}{|x-y|^{n+2s}}dx dy\right)^{\frac12}.\]
Now we define the space
\[C_{Y}:= \{\ee: u,w \in C_{c}^{\infty}(\mb R^n): u=w=0\; \text{in}\; \mb R^n\setminus \Om\}.\]
Then $C_{\h}$ is a dense in the space $\h$.\\
\noi Denote $S:=\inf_{u\in X_{0}}\left\{\frac{\int_{\mb R^{2n}}|u(x)-u(y)|^{2}|x-y|^{-(n+2s)}dx dy}{(\int_{\mb R}|u|^{\al+\ba}dx)^{\frac{2}{\al+\ba}}}\right\}$,
$\overline{S}:=\inf_{w\in Y}\left\{\frac{\|\ee\|^2}{(\int_{\mb R}|u|^{\al}|w|^{\ba} dx)^{\frac{2}{\al+\ba}}}\right\}$
and
\[K_{\la,\mu} = \la\int_{\Om}f(x)(u_+)^{1-q}dx + \mu\int_{\Om}g(x)(w_+)^{1-q} dx.\]
\begin{Definition}
\noi A weak solution of the problem $(P_{\la,\mu})$ is a function $(u,w)\in \h$, $u,$ $w>0$ in $\Om$ such that for  every $(\phi,\psi)\in\h$
{\small\begin{align*}
\int_{Q}&\frac{(u(x)-u(y))(\phi(x)-\phi(y))}{|x-y|^{(n+2s)}}  dxdy+\int_{Q}\frac{(w(x)-w(y))(\psi(x)-\psi(y))}{|x-y|^{(n+2s)}}  dxdy
= \la\io f(x) (u^{-q} \phi)(x)dx \\
&+\mu\io g(x) (w^{-q} \psi)(x)dx   + \frac{\al}{\al+\ba}\io b(x) (u^{\al-1}v^{\ba} \phi)(x) dx+ \frac{\ba}{\al+\ba}\io b(x) (u^{\al}w^{\ba-1} \psi)(x) dx.
\end{align*}}
\end{Definition}
In order to present the existence of positive solution of $(P_{\la,\mu})$, we will consider the following problem
  \begin{equation*}
 (P_{\la,\mu}^{+}) \left\{
\begin{array}{lr}
(-\De)^s u =  \la f(x) u_{+}^{-q} +  \frac{\al}{\al+\ba}b(x) u_{+}^{\al-1}w_{+}^\ba\; \text{in}\;
\Om \\
(-\De)^s w =  \mu g(x) w_{+}^{-q}+ \frac{\ba}{\al+\ba} b(x) u_{+}^{\al} w_{+}^{\ba-1}\; \text{in}\;
\Om \\
 \quad \quad u, w>0\;\text{in}\;\Om, \quad u = w = 0 \; \mbox{in}\; \mb R^n \setminus\Om,\\
\end{array}
\quad \right.
\end{equation*}
where  $w_+:= \max\{w,0\}$, denote the positive part of $w$.
Then the function $\ee\in \h$, $u, w>0$ in $\Om\times\Om$ is a weak solution of the problem $(P_{\la,\mu}^{+})$ if for  every $(\phi,\psi)\in\h$
{\small\begin{align*}
\int_{Q}&\frac{(u(x)-u(y))(\phi(x)-\phi(y))}{|x-y|^{(n+2s)}}  dxdy+\int_{Q}\frac{(w(x)-w(y))(\psi(x)-\psi(y))}{|x-y|^{(n+2s)}}  dxdy = \la\io f(x) (u_{+}^{-q} \phi)(x)dx\\
 &+\mu\io g(x) (w_{+}^{-q} \psi)(x)dx   + \frac{\al}{\al+\ba}\io b(x) (u_{+}^{\al-1}v_{+}^{\ba} \phi)(x) dx+ \frac{\ba}{\al+\ba}\io b(x) (w_{+}^{\al}v_{+}^{\ba-1} \psi)(x) dx.
\end{align*}}
We note that if $\ee>0$ is a solution of $(P_{\la,\mu}^{+})$ then one can easily see that $\ee$ is also a solution $(P_{\la,\mu})$. To find the solution of $(P_{\la,\mu}^{+})$, we will use variational approach. So we define the associated functional $J_{\la,\mu}: \h \ra \mb [-\infty,\infty)$ as
\[J_{\la,\mu}(u,w) =  \frac{1}{2}\uw- \frac{1}{1-q}\io (\la f(x)u_{+}^{1-q}+ \mu g(x) w_{+}^{1-q}) dx -\frac{1}{\al+\ba} \io b(x) u_{+}^{\al}w_{+}^{\ba}dx. \]

\noi Here $J_{\la,\mu}$ is not bounded below on $Y$ but is bounded below on appropriate subset $\n$ of $Y$. Therefore in order to obtain the existence results, we introduce the Nehari manifold
\begin{equation*}
\n= \left\{(u,w)\in Y: \ld J_{\la,\mu}^{\prime}(u,w),(u,w)\rd=0
\right\}=\left\{(u,w)\in Y : \phi_{u,w}^{\prime}(1)=0\right\}
\end{equation*}
where $\ld\;,\; \rd$ denotes the duality between $Y$ and its dual
space. Thus $(u,w)\in \n$ if and only if
\begin{equation}\label{eq2}
\uw-\left(\fg\right)- \io\bw =0
\end{equation}
We note that $\n$ contains every solution of
($P_{\la,\mu}$). Now as we know that the Nehari manifold is closely
related to the behavior of the functions $\phi_{u,v}: \mb R^+\ra \mb R$
defined as $\phi_{u,w}(t)=J_{\la,\mu}(tu,tw)$. Such maps are called fiber
maps and were introduced by Drabek and Pohozaev in \cite{DP}. For
$(u,w)\in Y$, we have
\begin{align*}
\phi_{u,w}(t) &= \frac{t^2}{2} \uw- \frac{t^{1-q}}{1-q}\aw- \frac{2t^{\al+\ba}}{\al+\ba}\io\bw ,\\
\phi_{u,w}^{\prime}(t) &=  t\uw- { t^{-q}}\aw  - t^{\al+\ba-1} \io\bw,\\
\phi_{u,w}^{\prime\prime}(t) &= \uw +  qt^{-q-1} \aw - (\al+\ba-1) t^{\al+\ba-2}\io\bw.
\end{align*}
Then it is easy to see that $(tu,tw)\in \n$ if and only if
$\phi_{u,w}^{\prime}(t)=0$ and in particular, $u\in \n$ if
and only if $\phi_{u,w}^{\prime}(1)=0$. Thus it is natural to split
$\n$ into three parts corresponding to local minima,
local maxima and points of inflection. For this we set
\begin{align*}
\n^{\pm}&:= \left\{(u,w)\in \n:
\phi_{u,w}^{\prime\prime}(1)
\gtrless0\right\} =\left\{(tu,tw)\in Y : \phi_{u,w}^{\prime}(t)=0,\; \phi_{u,w}^{\prime\prime}(t)\gtrless  0\right\},\\
\n^{0}&:= \left\{(u,w)\in \n:
\phi_{u,w}^{\prime\prime}(1) = 0\right\}=\left\{(tu,tw)\in Y :
\phi_{u,w}^{\prime}(t)=0,\; \phi_{u,w}^{\prime\prime}(t)= 0\right\}.
\end{align*}
We also observe that if $\ee\in\n$ then
$$\phi_{u,w}^{\prime\prime}(1)
=\left\{
\begin{array}{lr}
(1+q) \uw - (\al+\ba-1+q)\io\bw\\
(2-\al-\ba) \uw +(\al+\ba-1+q) \aw.
\end{array}
\right.
$$
\noi Inspired by \cite{yc}, we show that how variational methods can be used to established some existence and multiplicity results for $(P_{\la,\mu}^{+})$. Our results are as follows:
\begin{Theorem}\label{th1}
Suppose that $\la\in (0,\La)$, where
\begin{align*}
\La:= \frac{(1+q)}{(\al+\ba-1+q)} \left(\frac{\al+\ba-2}{\al+\ba-1+q}\right)^{\frac{\al+\ba-2}{1+q}} \frac{1}{\|b\|} \left(\frac{S^{\al+\ba-1+q}}{\|a\|^{\al+\ba-2}}\right)^{\frac{1}{1+q}}.
\end{align*}
Then the problem $(P_{\la,\mu})$ has at least two solutions
\[\ee\in\mc N_{\la,\mu}^{+}, (U,W)\in \mc N_{\la,\mu}^{-}\;\mbox{ with}\; \|(U,W)\|>\|(u,w)\|.\]
\end{Theorem}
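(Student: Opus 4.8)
The plan is to follow the by-now standard Nehari manifold / fibering map strategy adapted to the singular, sign-changing, fractional system. The first step is to establish the structural properties of $\n$: show that $\n^0 = \{(0,0)\}$ (equivalently, $\n^0 \cap (Y\setminus\{(0,0)\}) = \emptyset$) for $\la \in (0,\La)$. This is where the explicit value of $\La$ comes from: assuming $\ee \in \n^0$ forces both expressions for $\phi_{u,w}''(1)$ to vanish, so $(1+q)\uw = (\al+\ba-1+q)\io\bw$ and $(2-\al-\ba)\uw + (\al+\ba-1+q)\aw = 0$; combining these with the Sobolev inequality (constant $S$), the $L^\infty$ bound on $b$, and the $L^{q^*}$ bounds on $f,g$ via Hölder, one derives a lower bound on $\uw$ and an upper bound on $\uw$ that are incompatible unless $\la \geq \La$. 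Consequently $\n = \n^+ \cup \n^-$ and, by the implicit function theorem applied to $\phi_{u,w}'$, both $\n^+$ and $\n^-$ are smooth (locally) and the Lagrange multiplier argument shows any minimizer of $J_{\la,\mu}$ on $\n^\pm$ that lies off $\n^0$ is a critical point of $J_{\la,\mu}$ — here one must be careful because $J_{\la,\mu}$ is only lower semicontinuous, not $C^1$, due to the singular term $u_+^{1-q}$.

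The second step is the coercivity and boundedness below of $J_{\la,\mu}$ on $\n$: using \eqref{eq2} to eliminate $\io\bw$, write $J_{\la,\mu}(u,w)$ on $\n$ as a combination of $\uw$ and $\aw$ with signs that, together with Hölder and Sobolev, give $J_{\la,\mu}(u,w) \geq c_1\uw - c_2 (|\la|\|f\|_{q^*} + |\mu|\|g\|_{q^*})\|(u,w)\|^{1-q}$ for suitable positive constants, which is bounded below and coercive because $1-q < 2$. This yields a finite infimum $c^\pm := \inf_{\n^\pm} J_{\la,\mu}$, and one checks $c^+ < 0$ by evaluating $\phi_{u,w}$ at small $t$ along any direction with $\io\bw$ controlled, so a minimizing sequence for $c^+$ can be taken with positive mass. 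For $\n^-$, one shows $\n^-$ is closed away from $0$ and that minimizing sequences stay bounded away from the origin, so $c^-$ is attained in the closure.

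The third step is to extract weak limits: let $(u_k,w_k)$ be a minimizing sequence for $c^\pm$; by coercivity it is bounded in $Y$, so up to a subsequence $(u_k,w_k) \rightharpoonup (u,w)$ weakly in $Y$, strongly in $L^r$ for $r < 2_s^*$, and a.e. The singular term passes to the limit by Fatou-type arguments (the weights $f,g > 0$), the polynomial term $\io\bw$ passes by compactness, and weak lower semicontinuity of the norm gives $J_{\la,\mu}(u,w) \leq \liminf J_{\la,\mu}(u_k,w_k) = c^\pm$. One then argues $(u,w) \neq (0,0)$ (using $c^+ < 0$ for $\n^+$; using the quantitative lower bound on $\|(u_k,w_k)\|$ for $\n^-$), shows $(u,w) \in \n^\pm$ (for $\n^-$ one must rule out $(u,w) \in \n^+$, which would contradict minimality after a fibering argument), and concludes $J_{\la,\mu}(u,w) = c^\pm$, i.e. the infimum is attained. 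The ordering $\|(U,W)\| > \|\ee\|$ follows because on $\n^-$ the fiber map attains its maximum whereas on $\n^+$ its minimum, and the two Nehari components are separated.

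The fourth and final step is to show these minimizers are genuine weak solutions of $(P_{\la,\mu}^+)$ and hence of $(P_{\la,\mu})$. Since $J_{\la,\mu}$ is not Gâteaux differentiable, the standard trick is: for $(\phi,\psi) \in Y$ with $\phi,\psi \geq 0$ and $t > 0$ small, use $(u + t\phi, w + t\psi) \in Y$ as a test perturbation, exploit that $s \mapsto (u+s\phi)_+^{1-q}$ is concave to get a one-sided estimate, divide by $t$, let $t \to 0^+$, and obtain the variational inequality $\int_Q \frac{(u(x)-u(y))(\phi(x)-\phi(y))}{|x-y|^{n+2s}}dxdy + \int_Q \frac{(w(x)-w(y))(\psi(x)-\psi(y))}{|x-y|^{n+2s}}dxdy \geq \la\io f(x)u_+^{-q}\phi + \mu\io g(x)w_+^{-q}\psi + \frac{\al}{\al+\ba}\io b(x)u_+^{\al-1}w_+^\ba\phi + \frac{\ba}{\al+\ba}\io b(x)u_+^\al w_+^{\ba-1}\psi$ for all such $(\phi,\psi)$; testing with $(u,w)$ itself (where equality holds by $(u,w)\in\n$) and a scaling/density argument upgrades the inequality to an equality for all $(\phi,\psi) \in Y$. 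Positivity $u,w > 0$ in $\Om$ follows from the strong maximum principle for $(-\De)^s$ applied to the equation, using $f,g>0$. I expect the main obstacle to be Step 1 combined with the non-differentiability issue in Steps 1 and 4: proving $\n^0 = \{(0,0)\}$ with the sharp constant $\La$ requires a delicate optimization of the two constraints against the Sobolev and Hölder inequalities, and making the Lagrange-multiplier / variational-inequality argument rigorous for the singular functional is the technically most demanding part, since one cannot simply invoke $J_{\la,\mu}'(u,w) = 0$.
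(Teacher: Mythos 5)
There is a genuine gap in the way you pass from constrained minimality to the Euler--Lagrange (variational) inequality in Step 4. You perturb the minimizer directly in the ambient space, using $(u+t\phi,w+t\psi)$ and the implicit one-sided estimate $J_{\la,\mu}(u+t\phi,w+t\psi)\geq J_{\la,\mu}(u,w)$ for small $t>0$. But $(u,w)$ only minimizes $J_{\la,\mu}$ on $\mc N_{\la,\mu}^{+}$ (resp.\ $(U,W)$ on $\mc N_{\la,\mu}^{-}$), not on a neighborhood in $Y$, so this inequality is not available. For the $\mc N_{\la,\mu}^{+}$ case one could in principle upgrade constrained minimality to local minimality in $Y$ by a fibering/projection argument, but that is itself a lemma requiring the implicit function theorem; for the $\mc N_{\la,\mu}^{-}$ case the claim is simply false in general: along the fiber direction $(\phi,\psi)=(U,W)$ the map $t\mapsto J_{\la,\mu}(tU,tW)$ has a local \emph{maximum} at $t=1$, so $J_{\la,\mu}(U+tU,W+tW)\leq J_{\la,\mu}(U,W)$ and the one-sided quotient argument collapses. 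This is exactly why the paper does not differentiate at the limit point at all: it applies the Ekeland variational principle to produce an almost-minimizing sequence with the quantitative estimate \eqref{c2}, uses Lemma \ref{le4} (implicit function theorem) to project perturbations $(u_k+\rho v_1\phi,\,w_k+\rho v_2\psi)$ back onto $\mc N_{\la,\mu}^{\pm}$, proves the projection derivative is uniformly bounded (Lemma \ref{le7}, which needs the uniform gap \eqref{s5} of Lemma \ref{le6}), and only then obtains the variational inequality \eqref{s9} by Fatou. Your proposal contains no substitute for the Ekeland estimate plus projection step, and without it the key inequality of Step 4 is unjustified for $\mc N_{\la,\mu}^{+}$ and wrong for $\mc N_{\la,\mu}^{-}$.

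A secondary, related issue is the order of your argument in Step 3: you claim attainment ($(u,w)\in\mc N_{\la,\mu}^{\pm}$ and $J_{\la,\mu}(u,w)=c^{\pm}$) before having any equation, but with only weak convergence the Nehari constraint can degenerate to a strict inequality for the limit, and ruling this out is not a soft fact. In the paper the logic runs the other way: the variational inequality \eqref{s9}--\eqref{s91}, tested with $\phi=u,\psi=w$, combined with weak lower semicontinuity, yields \eqref{s10}, hence strong convergence of the Ekeland sequence and membership in $\mc N_{\la,\mu}^{+}$ (Lemma \ref{le9}), with the sign condition \eqref{s2} of Lemma \ref{le6} excluding $\mc N_{\la,\mu}^{0}$; the analogous argument with \eqref{s11} handles $\mc N_{\la,\mu}^{-}$. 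Your Steps 1--2 (nonemptiness of $\mc N_{\la,\mu}^{\pm}$, triviality of $\mc N_{\la,\mu}^{0}$ under the smallness condition on $(\la,\mu)$, coercivity and boundedness below, negativity of the infimum on $\mc N_{\la,\mu}^{+}$, and the norm gap between the two components) do match the paper's Section 3, and your final upgrade from the inequality for nonnegative test functions to an equality is in the spirit of the paper's $(u+\e\phi)_{+}$ truncation argument in Lemma \ref{le10}; but the missing Ekeland-plus-projection mechanism is the technical heart of the proof and must be supplied.
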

\section{Fibering map analysis}
In this section, we show that $\mc N_{\la,\mu}^{\pm}$ is nonempty and $\mc N_{\la,\mu}^{0}=\{(0,0)\}$. Moreover, $J_{\la,\mu}$ is bounded below and coercive. Define
{\small\begin{align}
\Ga :=\left\{(\la,\mu)\in \mb R^2\setminus\{(0,0)\}:0<\La:= (|\la|\|f\|_{q^*})^{\frac{2}{1+q}}+(|\mu|\|g\|_{q^*})^{\frac{2}{1+q}}< C(n,\al,\ba,q,S)\right\},
\end{align}}
where,
{\small\begin{align}
C(n,\al,\ba,q,S)&= \left(\frac{(1+q)}{(\al+\ba-1+q)}\right)^{\frac{2}{\al+\ba-2}} \left(\frac{\al+\ba-2}{\al+\ba-1+q}\right)^{\frac{2}{1+q}} \left(\frac{1}{\|b\|_{\infty}}\right)^{\frac{2}{\al+\ba-2}} S^{\frac{2(\al+\ba-1+q)}{(1+q)(\al+\ba-2)}}.
\end{align}}

\begin{Lemma}\label{le1}
Let $(\la,\mu)\in\Ga$. Then for each $\ee\in \h$ with $\aw>0$, we have the following:
\begin{enumerate}
\item[$(i)$] $ \io \bw\leq 0$, then there exists a unique $0<t_1<t_{max}$ such that $(t_1 u,t_1 w)\in \mc N_{\la,\mu}^{+}$ and $J_{\la,\mu}(t_1u,t_1 w)=\ds\inf_{t> 0} J_{\la,\mu}(tu,tw)$,
\item[$(ii)$] $\io \bw > 0$, then there exists a unique $t_1$ and $t_2$ with $0<t_1<t_{max}<t_2$ such that $(t_1u,t_1 w)\in \mc N_{\la,\mu}^{+} $, $(t_2 u,t_2 w)\in \mc N_{\la,\mu}^{-}$ and
$J_{\la,\mu}(t_1 u, t_1 w)=\ds\inf_{0\leq t\leq t_{max}} J_{\la,\mu}(tu, tw)$, $J_{\la,\mu}(t_2u, t_2 w)=\ds\sup_{t\geq t_1} J_{\la,\mu}(tu, tw)$.
\end{enumerate}
\end{Lemma}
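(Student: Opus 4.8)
The plan is to study the fiber map $\phi_{u,w}(t) = J_{\la,\mu}(tu,tw)$ for $t>0$ and extract its critical points via the auxiliary function obtained by factoring out the lowest power of $t$. Since $\aw > 0$, write $\phi'_{u,w}(t) = t^{-q}\bigl(t^{1+q}\uw - \aw - t^{\al+\ba-1+q}\io\bw\bigr)$, so that critical points of $\phi_{u,w}$ correspond to zeros of
\[
m_{u,w}(t) := t^{1+q}\uw - t^{\al+\ba-1+q}\io\bw = \aw.
\]
First I would analyze $\psi(t) := t^{1+q}\uw - t^{\al+\ba-1+q}\io\bw$ (the left-hand side, before equating to $\aw$). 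When $\io\bw \le 0$, $\psi$ is strictly increasing from $0$ to $+\infty$ on $(0,\infty)$, so there is a unique $t_1$ with $\psi(t_1) = \aw > 0$; I then check $\phi''_{u,w}(t_1) > 0$ using the identity $\phi''_{u,w}(1) = (1+q)\uw - (\al+\ba-1+q)\io\bw$ evaluated at $t_1$ (rescaled), which is manifestly positive here, placing $(t_1u,t_1w)\in\n^+$; moreover $\phi'_{u,w}(t) > 0$ for $t>t_1$ and $<0$ for $t<t_1$, giving the global minimum on $(0,\infty)$. This is part $(i)$.

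For part $(ii)$, when $\io\bw > 0$ the function $\psi(t)$ increases from $0$, attains a unique interior maximum at some $t_{\max}$ (found by setting $\psi'(t)=0$, explicitly $t_{\max} = \bigl(\tfrac{(1+q)\uw}{(\al+\ba-1+q)\io\bw}\bigr)^{1/(\al+\ba-2)}$), and then decreases to $-\infty$. Thus the equation $\psi(t) = \aw$ has exactly two positive roots $t_1 < t_{\max} < t_2$ provided $\psi(t_{\max}) > \aw$. The root $t_1$ lies on the increasing branch, so $\psi'(t_1) > 0$, i.e. $\phi''_{u,w}(t_1) > 0$ and $(t_1u,t_1w)\in\n^+$ with $\phi_{u,w}$ minimized on $[0,t_{\max}]$ there; the root $t_2$ lies on the decreasing branch, so $\psi'(t_2)<0$, i.e. $\phi''_{u,w}(t_2) < 0$ and $(t_2u,t_2w)\in\n^-$, with $\phi_{u,w}$ maximized on $[t_1,\infty)$ at $t_2$ since $\phi'_{u,w}$ changes sign from $+$ to $-$ there and $\phi_{u,w}(t)\to-\infty$.

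The one genuine obstacle is verifying the strict inequality $\psi(t_{\max}) > \aw$, which is precisely where the hypothesis $(\la,\mu)\in\Ga$ enters. Plugging $t_{\max}$ into $\psi$ gives $\psi(t_{\max}) = c_1 \uw^{(\al+\ba-1+q)/(\al+\ba-2)}\,\bigl(\io\bw\bigr)^{-(1+q)/(\al+\ba-2)}$ for an explicit constant $c_1 = c_1(\al,\ba,q)$. I would then bound $\io\bw \le \|b\|_\infty \bigl(\int_\Om |u|^{\al+\ba}\bigr)^{\al/(\al+\ba)}\bigl(\int_\Om|w|^{\al+\ba}\bigr)^{\ba/(\al+\ba)}$ via Hölder and use the Sobolev embedding constant $S$ together with the elementary inequality $a^{2\al/(\al+\ba)}b^{2\ba/(\al+\ba)} \le a^2 + b^2 = \uw$ to get $\io\bw \le S^{-(\al+\ba)/2}\|b\|_\infty \uw^{(\al+\ba)/2}$, while on the other side $\aw \le \bigl((|\la|\|f\|_{q^*})^{2/(1+q)} + (|\mu|\|g\|_{q^*})^{2/(1+q)}\bigr)^{(1+q)/2}\,C_S\,\uw^{(1-q)/2}$ using Hölder with exponent $q^* = \frac{\al+\ba}{\al+\ba-1+q}$ and the Sobolev embedding. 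Substituting these bounds, the required inequality $\psi(t_{\max}) > \aw$ reduces, after the powers of $\uw$ cancel, exactly to $(|\la|\|f\|_{q^*})^{2/(1+q)} + (|\mu|\|g\|_{q^*})^{2/(1+q)} < C(n,\al,\ba,q,S)$, i.e. to $(\la,\mu)\in\Ga$. I would finish by remarking that in both cases the uniqueness of $t_1$ (and $t_2$) and the sign analysis of $\phi'_{u,w}$ give the claimed extremal characterizations of $J_{\la,\mu}(t_iu,t_iw)$, and that $\n^0 = \{(0,0)\}$ follows because a nonzero point of $\n^0$ would force $\psi(t) = \aw$ with $\psi'(t) = 0$ simultaneously, i.e. $\aw = \psi(t_{\max})$, contradicting the strict inequality just established.
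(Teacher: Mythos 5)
Your proposal is correct, and at its core it is the same fibering--map analysis as the paper's: the decisive quantitative step (the maximum of an auxiliary function must strictly exceed the singular term, proved via the H\"older/Sobolev bounds $\aw\le \La^{\frac{1+q}{2}}S^{-\frac{1-q}{2}}\|\ee\|^{1-q}$ and $\io\bw\le \|b\|_{\infty}S^{-\frac{\al+\ba}{2}}\|\ee\|^{\al+\ba}$) is identical, and your reduction does reproduce exactly the threshold $C(n,\al,\ba,q,S)$ defining $\Ga$, as I checked by matching exponents. The only real difference is the normalization of $\phi_{u,w}'$: the paper factors out $t^{\al+\ba-1}$ and studies the zeros of $\psi_{u,w}(t)=t^{2-\al-\ba}\uw-t^{1-\al-\ba-q}\aw-\io\bw$, whose maximizer $t_{max}=\left[\frac{(\al+\ba-2)\uw}{(\al+\ba-1+q)\aw}\right]^{-\frac{1}{1+q}}$ involves the singular term and is well defined in both cases, whereas you factor out $t^{-q}$ and intersect $t^{1+q}\uw-t^{\al+\ba-1+q}\io\bw$ with the level $\aw$, so your $t_{max}$ involves $\io\bw$ and is undefined when $\io\bw\le 0$; since any point strictly between $t_1$ and $t_2$ serves equally well in the extremal characterizations, this is immaterial, except that in case $(i)$ the inequality $t_1<t_{max}$ (with $t_{max}$ as in the paper) comes for free in the paper's normalization, while in yours it would require a one-line additional remark. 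Everything else in your argument (the sign of the derivative of the auxiliary function at the roots giving $\phi_{u,w}''\gtrless 0$ and hence membership in $\mc N_{\la,\mu}^{\pm}$, and the sign changes of $\phi_{u,w}'$ giving the inf/sup statements) matches the paper's proof step for step.
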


\begin{proof}
For $t>0$, we define
\begin{align*}
\psi_{u,w}(t)= t^{2-\al-\ba}\uw -t^{-\al-\ba+1-q} \aw- \io\bw.
\end{align*}
One can easily see that $\psi_{u,w}(t)\ra -\infty$ as $t\ra 0^{+}$. Now
{\small\begin{align*}
\psi_{u,w}^{\prime}(t)&= (2-\al-\ba)t^{1-\al-\ba}\uw +(\al+\ba-1+q)t^{-\al-\ba-q} \aw.\\
\psi_{u,w}^{\prime\prime}(t)&= (2-\al-\ba)(1-\al-\ba)t^{-\al-\ba}\|\ee\|^2 -(\al+\ba-1+q)(\al+\ba+q)t^{-\al-\ba-q-1} \aw.
\end{align*}}
Then $\psi_{u,w}^{\prime}(t)=0$ if and only if $t=t_{max}:= \left[\frac{(\al+\ba-2)\uw}{(\al+\ba-1+q)\aw}\right]^{-\frac{1}{1+q}}$. Also
{\small\begin{align*}
\psi_{u,w}^{\prime\prime}&(t_{max})= (2-\al-\ba)(1-\al-\ba)\left[\frac{(\al+\ba-2)\uw}{(\al+\ba-1+q)\aw}\right]^{\frac{\al+\ba}{1+q}}\uw\\
&\quad - (\al+\ba-1+q)(\al+\ba+q)\left[\frac{(\al+\ba-2)\uw}{(\al+\ba-1+q)\aw}\right]^{\frac{\al+\ba+q+1}{1+q}} \aw \\
&= -\uw (\al+\ba-2)(1+q)\left[\frac{(\al+\ba-2)\uw}{(\al+\ba-1+q)\aw}\right]^{\frac{\al+\ba}{1+q}}<0.
\end{align*}}
Thus $\psi_{u,w}$ achieves its maximum at $t=t_{max}$. Now using the H\"{o}lder's inequality and fractional Sobolev inequality, we obtain
{\begin{align}\label{e2}
\aw \leq& |\la| \io |f(x)| |u|^{1-q}dx +|\mu| \io |g(x)| |w|^{1-q} dx\notag\\
\leq& |\la|\|f\|_{q^*} \|u\|_{\al+\ba}^{1-q}+ |\mu|\|g\|_{q^*} \|w\|_{\al+\ba}^{1-q}\notag\\
\leq& ((|\la|\|f\|_{q^*})^{\frac{2}{1+q}}+(|\mu|\|g\|_{q^*})^{\frac{2}{1+q}})^{\frac{1+q}{2}}\left(\frac{ \|\ee\|}{\sqrt{S}}\right)^{1-q}\\
= &\La^{\frac{1+q}{2}}\left(\frac{ \|\ee\|}{\sqrt{S}}\right)^{1-q}.
\end{align}}
\begin{align}\label{e3}
\io \bw \leq&   \|b\|_{\infty}\left(\frac{\al}{\al+\ba}\int_{\Om} |u|^{\al+\ba}dx +\frac{\ba}{\al+\ba}\int_{\Om} |v|^{\al+\ba} dx\right)\notag\\
\leq &\|b\|_{\infty} \left(\frac{\|\ee\|}{\sqrt{S}}\right)^{\al+\ba}.
\end{align}
Using \eqref{e2} and \eqref{e3} we obtain,
{\small\begin{align}\label{e40}
&\psi_{u,w}(t_{max})\notag\\
&= \frac{(1+q)}{(\al+\ba-1+q)} \left(\frac{\al+\ba-2}{\al+\ba-1+q}\right)^{\frac{\al+\ba-2}{1+q}} \frac{\|\ee\|^{\frac{2(\al+\ba-1+q)}{(1+q)}}}{[\aw]^{\frac{\al+\ba-2}{1+q}}}- \io\bw \notag\\
&\geq \left[\frac{(1+q)}{(\al+\ba-1+q)} \left(\frac{\al+\ba-2}{\al+\ba-1+q}\right)^{\frac{\al+\ba-2}{1+q}} \left(\frac{(\sqrt{S})^{(1-q)}}{\La^{\frac{1+q}{2}}}\right)
^{\frac{(\al+\ba-2)}{(1+q)}}  -  \|b\|_{\infty} \left(\frac{1}{\sqrt{S}}\right)^{\al+\ba} \right]\|\ee\|^{\al+\ba}\notag\\
&\equiv E_{\la,\mu} \|\ee\|^{\al+\ba}.
\end{align}}
where
{\small\begin{align*}
E_{\la,\mu} &= \left[\frac{(1+q)}{(\al+\ba-1+q)} \left(\frac{\al+\ba-2}{\al+\ba-1+q}\right)^{\frac{\al+\ba-2}{1+q}}\left(\frac{(\sqrt{S})^{(1-q)}}
{\La^{\frac{1+q}{2}}}\right)^{\frac{\al+\ba-2}{1+q}}  - \|b\|_{\infty} \left(\frac{1}{\sqrt{S}}\right)^{\al+\ba} \right]
\end{align*}}
Then we see that $E_{\la,\mu}=0$ if and only if $\La= C(n,\al,\ba,q,S)$, where
\begin{align*}
 C(n,\al,\ba,q,S)=  \left(\frac{(1+q)}{(\al+\ba-1+q)}\right)^{\frac{2}{\al+\ba-2}} \left(\frac{\al+\ba-2}{\al+\ba-1+q}\right)^{\frac{2}{1+q}} \left(\frac{1}{\|b\|_{\infty}}\right)^{\frac{2}{\al+\ba-2}} S^{\frac{2(\al+\ba-1+q)}{(1+q)(\al+\ba-2)}}.
\end{align*}
\noi Thus for $(\la,\mu)\in\Ga$, we have $E_{\la,\mu}>0$, and therefore it follows from \eqref{e40} that $\psi_{u,w}(t_{max})>0$. \\
\noi $(i)$ If $\io\bw\geq 0$, then $\psi_{u,w}(t)\ra - \io \bw<0$ as $t\ra\infty$. Consequently, $\psi_{u,w}(t)$ has exactly two points $0<t_{1}<t_{max}<t_{2}$ such that
\[ \psi_{u,w}(t_{1})=0=\psi_{u,w}(t_{2})\;\mbox{and}\; \psi^{\prime}_{u,w}(t_{1})>0> \psi^{\prime}_{u,w}(t_{2}).\]
Now we show that if $\psi_{u,w}(t)=0$ and $\psi^{\prime}_{u,w}(t)>0$, then $(tu,tw)\in\mc N_{\la,\mu}^{+}$.
 \begin{align*}
 \psi_{u,w}(t)=0  &\Leftrightarrow \|(tu,tw)\|^2 =  K_{\la,\mu}(tu,tw) +  \io b(x)(t u)_{+}^{\al}(t v)_{+}^{\ba} dx \\
 &\Leftrightarrow (tu,tw)\in \mc N_{\la,\mu},
 \end{align*}
 and therefore
{\small \begin{align*}
 \psi^{\prime}_{u,w}(t)>0 &\Rightarrow (2-\al-\ba)t^{1-\al-\ba}\uw -(-\al-\ba+1-q) t^{-\al-\ba-q} \aw >0\\
 &\Rightarrow (2-\al-\ba)\|(tu,tw)\|^2 +(\al+\ba-1+q) \left[\|(tu,tw)\|^2 -  \io b(x)(t u)_{+}^{\al}(t w)_{+}^{\ba} dx\right]>0,\\
 &\Rightarrow (1+q)\|(tu,tw)\|^2 - (\al+\ba-1+q)  \io b(x)(t u)_{+}^{\al}(t w)_{+}^{\ba} dx >0 \\
&\Rightarrow (tu,tw)\in \mc N_{\la,\mu}^{+}.
 \end{align*}}
Similarly one can show that if $\psi_{u,w}(t)=0$ and $\psi^{\prime}_{u,w}(t)<0$, then $(tu,tw)\in\mc N_{\la,\mu}^{-}$.\\
Now $\phi_{u,w}^{\prime}(t)= t^{\al+\ba-1}\psi_{u,w}(t)$. Thus $\phi_{u,w}^{\prime}(t)<0$ in $(0,t_1)$, $\phi_{u,w}^{\prime}(t)>0$ in $(t_1,t_2)$ and $\phi_{u,w}^{\prime}(t)<0$ in $(t_2,\infty)$. Hence $J_{\la,\mu}(t_1 u,t_1 w)=\ds\inf_{0\leq t\leq t_{max}} J_{\la,\mu}(tu, tw)$, $J_{\la,\mu}(t_1w,t_2 w)=\ds\sup_{t\geq t_1} J_{\la,\mu}(tu,t w)$. Moreover $(t_1u,t_{1} w)\in \mc N_{\la,\mu}^{+}$ and $(t_2u,t_{2} w)\in \mc N_{\la,\mu}^{-}$.\\
\noi $(ii)$ If $\io\bw<0$ and $\psi_{u,w}(t)\ra - \io \bw>0$ as $t\ra\infty$.  Consequently, $\psi_{u,w}(t)$ has exactly one point $0<t_{1}<t_{max}$ such that
\[ \psi_{u,w}(t_{1})=0\; \mbox{and}\;\psi^{\prime}_{u,w}(t_{1})>0.\]
Using $\phi_{u,w}^{\prime}(t)= t^{\al+\ba-1}\psi_{u,w}(t)$, we have $\phi_{u,w}^{\prime}(t)<0$ in $(0,t_1)$, $\phi_{u,w}^{\prime}(t)>0$ in $(t_1,\infty)$. So, $J_{\la,\mu}(t_1u,t_1 w)=\ds\inf_{t\geq 0} J_{\la,\mu}(tu, tw)$. Hence, it follows that $(t_1u,t_{1} w)\in \mc N_{\la,\mu}^{+}$.
\end{proof}
\begin{Corollary}
Suppose that $(\la,\mu)\in\Ga$, then $\mc N_{\la,\mu}^{\pm}\ne\emptyset$.
\end{Corollary}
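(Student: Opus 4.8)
The corollary is essentially immediate from Lemma~\ref{le1}: the plan is to exhibit a single pair $\ee\in\h$, with $u,w\ge0$ and $\ee\not\equiv(0,0)$, for which both $\aw>0$ and $\io\bw>0$, and then to invoke part~(ii) of that lemma. Once such a pair is at hand, Lemma~\ref{le1}(ii) produces $0<t_1<t_{max}<t_2$ with $(t_1u,t_1w)\in\n^{+}$ and $(t_2u,t_2w)\in\n^{-}$, so both $\n^{+}$ and $\n^{-}$ are nonempty. Note that already part~(i) gives $\n^{+}\neq\emptyset$ as soon as one has \emph{any} pair with $\aw>0$; it is only the ``minus'' part that forces us to secure the extra condition $\io\bw>0$.

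Thus the real content is the construction of the test pair, and that is the step I would carry out carefully. First I would use hypothesis (b1): since $b^{+}\not\equiv0$ there is a $\delta>0$ with $|\{x\in\Om:b(x)>\delta\}|>0$, and hence one can choose a nonnegative $\eta\in C_c^{\infty}(\Om)$, $\eta\not\equiv0$, with $\io b(x)\eta^{\al+\ba}\,dx>0$. With $u=w=c\eta$ for $c>0$, the singular term is $c^{\al+\ba}\io b(x)\eta^{\al+\ba}\,dx>0$ for every $c>0$, while using (a1) (so $f,g>0$ a.e.\ in $\Om$),
\[
\aw=c^{1-q}\Big(\la\io f(x)\eta^{1-q}\,dx+\mu\io g(x)\eta^{1-q}\,dx\Big),
\]
which for $\la,\mu>0$ — the setting of Theorem~\ref{th1} — is strictly positive for every $c>0$, so $(\eta,\eta)\in\h$ already does the job. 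For a general $(\la,\mu)\in\Ga$ with, say, $\la>0$, I would instead take an asymmetric dilation $u=c\phi$, $w=\xi$ with nonnegative $\phi,\xi\in C_c^{\infty}(\Om)$ chosen so that $\io b\phi^{\al}\xi^{\ba}\,dx>0$: then $\io\bw=c^{\al}\io b\phi^{\al}\xi^{\ba}\,dx>0$ for all $c>0$, and since $1-q>0$ the term $\la c^{1-q}\io f(x)\phi^{1-q}\,dx$ dominates as $c\to\infty$, yielding $\aw>0$.

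The only slightly delicate point is precisely this construction — arranging $\aw>0$ and $\io\bw>0$ at the same time. The measure-theoretic input from (a1)--(b1) guarantees that none of the integrals over $\{b>\delta\}$ degenerates, and the mismatch of homogeneities ($\al+\ba>2$ versus $1-q<1$) is exactly what allows a one-parameter dilation to push both quantities positive simultaneously. Having produced the test pair, the conclusion $\n^{\pm}\neq\emptyset$ follows directly from Lemma~\ref{le1}(ii) (and, for $\n^{+}$ alone, already from part~(i)).
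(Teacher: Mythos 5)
Your proposal is correct and follows essentially the same route as the paper: the paper's proof simply asserts that by (a1) and (b1) one can pick $\ee\in Y\setminus\{(0,0)\}$ with $\aw>0$ and $\io\bw>0$ and then invokes part (ii) of Lemma \ref{le1} to get elements of both $\mc N_{\la,\mu}^{+}$ and $\mc N_{\la,\mu}^{-}$. Your write-up merely makes explicit the construction of the test pair (using $b^{+}\not\equiv 0$, $f,g>0$, and a dilation to handle a possibly sign-changing parameter), which the paper leaves as an unelaborated one-line claim and which, like the paper, implicitly uses that at least one of $\la,\mu$ is positive, consistent with the setting of Theorem \ref{th1}.
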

\begin{proof}
From $(a1)$ and $(b1)$, we can
choose $\ee \in Y\setminus\{(0,0)\}$ such that $\aw>0$ and $\io\bw>0$.
By $(ii)$ of Lemma \ref{le1}, there exists unique $t_1$ and $t_2$ such that $(t_1u,t_1 w)\in \mc N_{\la,\mu}^{+}$, $(t_2u, t_2 w)\in \mc N_{\la,\mu}^{-}$. In conclusion, $\mc N_{\la,\mu}^{\pm}\ne \emptyset.$\QED
\end{proof}
 \begin{Lemma}
 For $(\la,\mu)\in\Ga$, we have $\mc N_{\la,\mu}^{0}=\{(0,0)\}$.
 \end{Lemma}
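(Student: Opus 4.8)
The plan is to argue by contradiction: suppose there exists $(u,w)\in\mc N_{\la,\mu}^{0}$ with $(u,w)\neq(0,0)$. Since $(u,w)\in\mc N_{\la,\mu}$, equation \eqref{eq2} holds, and since $(u,w)\in\mc N_{\la,\mu}^{0}$ we have $\phi_{u,w}^{\prime\prime}(1)=0$. The key is to exploit the two equivalent expressions for $\phi_{u,w}^{\prime\prime}(1)$ recorded just before Theorem \ref{th1}: on the one hand $(1+q)\uw-(\al+\ba-1+q)\io\bw=0$, and on the other hand $(2-\al-\ba)\uw+(\al+\ba-1+q)\aw=0$. From the first identity, $\io\bw=\frac{1+q}{\al+\ba-1+q}\uw>0$ (note $\uw>0$ since $(u,w)\neq(0,0)$), and in particular the constraint forces $\aw>0$ from the second identity as well, since $(\al+\ba-1+q)\aw=(\al+\ba-2)\uw>0$.

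Next I would bound $\uw$ from both sides using these identities together with inequalities \eqref{e2} and \eqref{e3}. From the first identity and \eqref{e3},
\[
(1+q)\uw=(\al+\ba-1+q)\io\bw\leq(\al+\ba-1+q)\|b\|_{\infty}\left(\frac{\|\ee\|}{\sqrt{S}}\right)^{\al+\ba},
\]
which gives a lower bound of the form $\uw\geq c_1>0$ for an explicit constant $c_1$ depending on $q,\al,\ba,S,\|b\|_{\infty}$. From the second identity and \eqref{e2},
\[
(\al+\ba-2)\uw=(\al+\ba-1+q)\aw\leq(\al+\ba-1+q)\La^{\frac{1+q}{2}}\left(\frac{\|\ee\|}{\sqrt{S}}\right)^{1-q},
\]
which gives an upper bound $\uw\leq c_2\La^{\frac{1+q}{(1+q)}\cdots}$ — more precisely, raising to the appropriate power, $\|\ee\|^{1+q}\leq c_2\La^{\frac{1+q}{2}}$ for an explicit $c_2$. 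Combining the lower and upper bounds on $\|\ee\|$ yields an inequality of the form $c_1\leq$ (something)$\cdot\La^{\theta}$, i.e.\ a lower bound $\La\geq C(n,\al,\ba,q,S)$, contradicting $(\la,\mu)\in\Ga$, for which $\La<C(n,\al,\ba,q,S)$. This is exactly the borderline computation that appears in \eqref{e40}, so the constant will match up by construction.

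Concretely, I would manipulate the two bounds so as to eliminate $\|\ee\|$: from $(1+q)\uw\leq(\al+\ba-1+q)\|b\|_{\infty}S^{-\frac{\al+\ba}{2}}\|\ee\|^{\al+\ba}$ we get $\|\ee\|^{\al+\ba-2}\geq\frac{(1+q)S^{\frac{\al+\ba}{2}}}{(\al+\ba-1+q)\|b\|_{\infty}}$, and from $(\al+\ba-2)\uw\leq(\al+\ba-1+q)\La^{\frac{1+q}{2}}S^{-\frac{1-q}{2}}\|\ee\|^{1-q}$ we get $\|\ee\|^{1+q}\leq\frac{(\al+\ba-1+q)\La^{\frac{1+q}{2}}}{(\al+\ba-2)S^{\frac{1-q}{2}}}$. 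Raising the first to the power $\frac{1+q}{\al+\ba-2}$ and comparing with the second gives
\[
\left(\frac{(1+q)S^{\frac{\al+\ba}{2}}}{(\al+\ba-1+q)\|b\|_{\infty}}\right)^{\frac{1+q}{\al+\ba-2}}\leq\|\ee\|^{1+q}\leq\frac{(\al+\ba-1+q)\La^{\frac{1+q}{2}}}{(\al+\ba-2)S^{\frac{1-q}{2}}},
\]
and solving for $\La$ produces precisely $\La\geq C(n,\al,\ba,q,S)$, the desired contradiction. Hence $\mc N_{\la,\mu}^{0}=\{(0,0)\}$.

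The main obstacle is purely bookkeeping: one must be careful about which of the two equivalent forms of $\phi_{u,w}^{\prime\prime}(1)$ to use at each stage (the first to bound $\io\bw$ below in terms of $\|\ee\|$, the second to bound $\aw$ above), and about tracking the exponents so that the final constant coincides with $C(n,\al,\ba,q,S)$ as defined. There is no analytic difficulty here — the inequalities \eqref{e2} and \eqref{e3}, i.e.\ H\"older plus the fractional Sobolev embedding, are already in hand from the proof of Lemma \ref{le1}, and the whole argument is a reprise of the computation \eqref{e40} showing $\psi_{u,w}(t_{max})>0$, specialized to $t_{max}=1$. One minor point to note is that the case $(u,w)=(0,0)$ must be excluded at the start, which is why the statement asserts $\mc N_{\la,\mu}^{0}=\{(0,0)\}$ rather than $\mc N_{\la,\mu}^{0}=\emptyset$; indeed $(0,0)$ trivially satisfies both $\phi^{\prime}(1)=0$ and $\phi^{\prime\prime}(1)=0$.
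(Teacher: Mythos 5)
Your proof is correct and follows essentially the same route as the paper: argue by contradiction, use the two equivalent forms of the condition $\phi_{u,w}^{\prime\prime}(1)=0$ for a nonzero element of $\mc N_{\la,\mu}^{0}$ together with the H\"{o}lder/Sobolev estimates \eqref{e2}--\eqref{e3}, and compare with the same threshold $C(n,\al,\ba,q,S)$. The only cosmetic difference is that you eliminate $\|\ee\|$ between the two resulting bounds (in effect the gap bounds $A_0$ and $A_{\la,\mu}$ of Lemma \ref{le2}) to conclude $\La\geq C(n,\al,\ba,q,S)$, whereas the paper substitutes the $\mc N_{\la,\mu}^{0}$ identities into the inequality $\psi_{u,w}(t_{max})\geq E_{\la,\mu}\|\ee\|^{\al+\ba}>0$ from \eqref{e40} and obtains $0<0$; the underlying computation is identical.
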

\begin{proof}
 We prove this by contradiction. Assume that there exists $(0,0)\not\equiv \ee\in \mc N_{\la,\mu}^{0}$. Then it follows from $\ee\in \mc N_{\la,\mu}^{0}$ that
\[(1+q)\uw =  (\al+\ba-1+q) \io\bw \]
and consequently
\begin{align*}
0&= \uw -\aw -\io\bw\\
&=\frac{(\al+\ba-2)}{(\al+\ba-1+q)}\uw -\aw.
\end{align*}

\noi Therefore, as $(\la,\mu)\in \Ga$ and $\ee\not\equiv (0,0)$, we use similar arguments as those in \eqref{e40} to get
{\small\begin{align*}
0&< E_{\la,\mu} \|\ee\|^{\al+\ba}\\
&\leq \frac{(1+q)}{(\al+\ba-1+q)}\left(\frac{\al+\ba-2}{\al+\ba-1+q}\right)^{\frac{\al+\ba-2}{1+q}} \frac{\|\ee\|^{\frac{2(\al+\ba-1+q)}{1+q}}}{\left[\aw\right]^{\frac{\al+\ba-2}{1+q}}}- \io\bw \\
&= \frac{(1+q)}{(\al+\ba-1+q)} \left(\frac{\al+\ba-2}{\al+\ba-1+q}\right)^{\frac{\al+\ba-2}{1+q}} \frac{\|\ee\|^{\frac{2(\al+\ba-1+q)}{1+q}}}{\left(\frac{\al+\ba-2}{\al+\ba-1+q}\uw\right)^{\frac{\al+\ba-2}{1+q}}}-\frac{(1+q)}{(\al+\ba-1+q)}\uw\\
&=0,
\end{align*}}
a contradiction. Hence $\ee=(0,0)$. That is, $\mc N_{\la,\mu}^{0}=\{(0,0)\}$.\QED
\end{proof}

\noi We note that $\Ga$ is also related to a gap structure in $\mc N_{\la,\mu}$:

\begin{Lemma}\label{le2}
Suppose that $(\la,\mu) \in \Ga$, then there exist a gap structure in $\mc N_{\la,\mu}$:
\[\|(U,W)\|> A_{0}> A_{\la,\mu}> \|(u,w)\|\; \mbox{for all} \; (u,w)\in \mc N_{\la,\mu}^{+}, (U,W)\in \mc N_{\la,\mu}^{-},\]
where
{\small\[A_{0}=  \left[\frac{(1+q)}{(\al+\ba-1+q)\|b\|_{\infty}} (\sqrt{S})^{\al+\ba} \right]^{\frac{1}{\al+\ba-2}}\;\mbox{and}\; A_{\la,\mu}=\left[\frac{(\al+\ba-1+q)}{(\al+\ba-2)} \left(\frac{1}{\sqrt{S}}\right)^{1-q} \right]^{\frac{1}{1+q}}\La^{\frac12}.\]}
\end{Lemma}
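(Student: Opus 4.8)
The plan is to establish the two outer inequalities of the chain by using the two expressions recorded for $\phi_{u,w}^{\prime\prime}(1)$ on $\mc N_{\la,\mu}$ together with the already-proven estimates \eqref{e2} and \eqref{e3}, and then to verify the middle inequality $A_{0}>A_{\la,\mu}$ by a direct computation which collapses precisely to the condition $(\la,\mu)\in\Ga$.

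First I would take $(U,W)\in\mc N_{\la,\mu}^{-}$, so that $\phi_{U,W}^{\prime\prime}(1)<0$. By the first of the two expressions for $\phi_{u,w}^{\prime\prime}(1)$ this means
\[(1+q)\|(U,W)\|^{2}<(\al+\ba-1+q)\io b(x)U_{+}^{\al}W_{+}^{\ba}\,dx;\]
note $(U,W)\neq(0,0)$, since otherwise $\phi^{\prime\prime}(1)=0$, so $\|(U,W)\|>0$. Bounding the right-hand side by \eqref{e3}, dividing by $\|(U,W)\|^{2}$ and rearranging gives $\|(U,W)\|^{\al+\ba-2}>\frac{(1+q)(\sqrt{S})^{\al+\ba}}{(\al+\ba-1+q)\|b\|_{\infty}}$, i.e. $\|(U,W)\|>A_{0}$. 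Symmetrically, for $(u,w)\in\mc N_{\la,\mu}^{+}$ we have $\phi_{u,w}^{\prime\prime}(1)>0$, so the second expression gives $(\al+\ba-2)\uw<(\al+\ba-1+q)\aw$ with $\|(u,w)\|>0$; bounding $\aw$ by \eqref{e2}, dividing by $\|(u,w)\|^{1-q}$ and rearranging gives $\|(u,w)\|^{1+q}<\frac{\al+\ba-1+q}{\al+\ba-2}\big(\tfrac{1}{\sqrt{S}}\big)^{1-q}\La^{\frac{1+q}{2}}$, i.e. $\|(u,w)\|<A_{\la,\mu}$.

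It then remains to check that $A_{0}>A_{\la,\mu}$. Raising both sides to the power $(\al+\ba-2)(1+q)>0$, this is equivalent to
\[\left(\frac{(1+q)(\sqrt{S})^{\al+\ba}}{(\al+\ba-1+q)\|b\|_{\infty}}\right)^{1+q}>\left(\frac{\al+\ba-1+q}{\al+\ba-2}\Big(\frac{1}{\sqrt{S}}\Big)^{1-q}\La^{\frac{1+q}{2}}\right)^{\al+\ba-2}.\]
Isolating $\La$ and using that the exponent of $\sqrt{S}$ accumulates to $(\al+\ba)(1+q)+(1-q)(\al+\ba-2)=2(\al+\ba-1+q)$ while that of $\al+\ba-1+q$ accumulates to $(1+q)+(\al+\ba-2)=\al+\ba-1+q$, the inequality becomes $\La^{\frac{(1+q)(\al+\ba-2)}{2}}<\frac{(1+q)^{1+q}(\al+\ba-2)^{\al+\ba-2}S^{\al+\ba-1+q}}{(\al+\ba-1+q)^{\al+\ba-1+q}\|b\|_{\infty}^{1+q}}$, and taking the $\frac{2}{(1+q)(\al+\ba-2)}$-th power this is exactly $\La<C(n,\al,\ba,q,S)$, which holds by the definition of $\Ga$. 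Assembling the three estimates yields the claimed chain.

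The routine parts are the two estimates on $\mc N_{\la,\mu}^{\pm}$; the only step demanding care is the last one, where the exponents of $S$ and of $\al+\ba-1+q$ must be tracked exactly so that the right-hand side collapses to the constant $C(n,\al,\ba,q,S)$ defining $\Ga$ rather than merely to something comparable to it.
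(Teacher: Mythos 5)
Your proof is correct and follows essentially the same route as the paper: the same two expressions for $\phi_{u,w}^{\prime\prime}(1)$ on the Nehari set, combined with the bounds \eqref{e2} and \eqref{e3}, give $\|(u,w)\|<A_{\la,\mu}$ on $\mc N_{\la,\mu}^{+}$ and $\|(U,W)\|>A_{0}$ on $\mc N_{\la,\mu}^{-}$, and the condition $(\la,\mu)\in\Ga$ gives the middle inequality. The only (minor) difference is that you verify $A_{0}>A_{\la,\mu}$ by a direct exponent computation showing it is exactly equivalent to $\La<C(n,\al,\ba,q,S)$, whereas the paper checks only the equality case $A_{0}=A_{\la,\mu}\Leftrightarrow\La=C(n,\al,\ba,q,S)$ and leaves the strict inequality to the implicit monotonicity of $A_{\la,\mu}$ in $\La$; your version makes that step fully explicit.
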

\begin{proof}
If $w\in \mc N_{\la,\mu}^{+}\subset \mc N_{\la,\mu}$, then
\begin{align*}
0&< (1+q)\uw - (\al+\ba-1+q)\io \bw \\
&= (2-\al-\ba) \uw + (\al+\ba-1+q)\aw.
\end{align*}
Hence it follows from \eqref{e2}
\begin{align*}
(\al+\ba-2) \uw &< (\al+\ba-1+q)\aw\\
& \leq (\al+\ba-1+q) ((|\la|\|f\|_{q^*})^{\frac{2}{1+q}}+(|\mu|\|g\|_{q^*})^{\frac{2}{1+q}})^{\frac{1+q}{2}}\left(\frac{ \|\ee\|}{\sqrt{S}}\right)^{1-q}
\end{align*}
which yields
\[\|\ee\|< \left[\frac{(\al+\ba-1+q) }{(\al+\ba-2)} \left(\frac{1}{\sqrt{S}}\right)^{1-q} \right]^{\frac{1}{1+q}}((|\la|\|f\|_{q^*})^{\frac{2}{1+q}}+(|\mu|\|g\|_{q^*})^{\frac{2}{1+q}})^{\frac{1}{2}}\equiv A_{\la,\mu}.\]
\noi If $(U,W)\in \mc N_{\la,\mu}^{-}$, then it follows from \eqref{e3} that
{\small\begin{align*}
(1+q)\|(U,W)\|^2 < (\al+\ba-1+q)\io b(x)U_{+}^{\al}W_{+}^{\ba} dx \leq  (\al+\ba-1+q)\|b\|_{\infty}\left(\frac{\|(U,W)\|}{\sqrt{S}}\right)^{\al+\ba}
\end{align*}}
which yields
\[\|(U,W)\|> \left[\frac{(1+q)}{(\al+\ba-1+q)\|b\|_{\infty}} (\sqrt{S})^{\al+\ba} \right]^{\frac{1}{\al+\ba-2}}\equiv A_0.\]
Now we show that $A_{\la,\mu}=A_0$ if and only if $\La= C(n,\al,\ba,q,S)$.
{\small\begin{align*}
 &\La = C(n,\al,\ba,q,S)= \left(\frac{(1+q)}{\|b\|_{\infty}(\al+\ba-1+q)}\right)^{\frac{2}{\al+\ba-2}} \left(\frac{\al+\ba-2}{\al+\ba-1+q}\right)^{\frac{2}{1+q}}  S^{\frac{2(\al+\ba-1+q)}{(1+q)(\al+\ba-2)}}.\\
&\Leftrightarrow A_{\la,\mu} =\La^{\frac{1}{2}}\left[\frac{(\al+\ba-1+q) }{(\al+\ba-2)} \left(\frac{1}{\sqrt{S}}\right)^{1-q} \right]^{\frac{1}{1+q}}\\
&= \left(\frac{(1+q)}{\|b\|_{\infty}(\al+\ba-1+q)}\right)^{\frac{1}{\al+\ba-2}} \left(\frac{\al+\ba-2}{\al+\ba-1+q}\right)^{\frac{1}{1+q}} S^{\frac{\al+\ba-1+q}{(1+q)(\al+\ba-2)}} \left[\frac{(\al+\ba-1+q) }{(\al+\ba-2)} \left(\frac{1}{\sqrt{S}}\right)^{1-q} \right]^{\frac{1}{1+q}}\\
  &\equiv A_0.
\end{align*}}
Thus for all $(\la,\mu)\in \Ga$, we can conclude that
\[\|(U,W)\|> A_{0}> A_{\la,\mu}> \|\ee\|\; \mbox{for all} \; \ee\in \mc N_{\la,\mu}^{+}, (U,W)\in \mc N_{\la,\mu}^{-}.\]
This completes the proof of the Lemma.\QED
\end{proof}

\begin{Lemma}\label{le3}
Suppose that $(\la,\mu)\in\Ga$, then $\mc N_{\la,\mu}^{-}$ is a closed set in $\h$- topology.
\end{Lemma}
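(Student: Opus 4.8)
The plan is to show that $\mc N_{\la,\mu}^{-}$ contains all its limit points. So I would take a sequence $(u_k,w_k)\in\mc N_{\la,\mu}^{-}$ with $(u_k,w_k)\to(u_0,w_0)$ strongly in $\h$ and aim to prove $(u_0,w_0)\in\mc N_{\la,\mu}^{-}$. First I would pass to the limit in the Nehari identity \eqref{eq2}: since $(u_k,w_k)\to(u_0,w_0)$ in $\h$, we have $\|(u_k,w_k)\|^2\to\|(u_0,w_0)\|^2$, and by the compact embedding of $X_0$ into $L^{\al+\ba}(\Om)$ (and into $L^r$ for the subcritical exponents appearing), together with $b\in L^\infty$ and $f,g\in L^{q^*}$, the dominated convergence / continuity of the Nemytskii-type terms gives $\aw[u_k,w_k]\to\aw[u_0,w_0]$ and $\io b(x)(u_k)_+^\al(w_k)_+^\ba\,dx\to\io b(x)(u_0)_+^\al(w_0)_+^\ba\,dx$. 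Hence $(u_0,w_0)\in\overline{\mc N_{\la,\mu}}=\mc N_{\la,\mu}\cup\{(0,0)\}$.

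Next I would rule out $(u_0,w_0)=(0,0)$ using the gap structure of Lemma \ref{le2}: each $(u_k,w_k)\in\mc N_{\la,\mu}^{-}$ satisfies $\|(u_k,w_k)\|>A_0>0$, and this strict lower bound passes to the limit to give $\|(u_0,w_0)\|\ge A_0>0$, so $(u_0,w_0)\ne(0,0)$ and therefore $(u_0,w_0)\in\mc N_{\la,\mu}$. It then remains to show $(u_0,w_0)\notin\mc N_{\la,\mu}^{0}$ and $(u_0,w_0)\notin\mc N_{\la,\mu}^{+}$; since by the previous lemma $\mc N_{\la,\mu}^{0}=\{(0,0)\}$ and we have just excluded the origin, membership in $\mc N_{\la,\mu}$ forces $(u_0,w_0)\in\mc N_{\la,\mu}^{+}\cup\mc N_{\la,\mu}^{-}$. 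To exclude $\mc N_{\la,\mu}^{+}$, recall that on $\mc N_{\la,\mu}^{-}$ one has $\io b(x)(u_k)_+^\al(w_k)_+^\ba\,dx>0$ (since $\phi_{u_k,w_k}^{\prime\prime}(1)<0$ forces $\io\bw[u_k,w_k]>0$ via the first expression for $\phi''$); more precisely, using Lemma \ref{le1}, $(u_k,w_k)=(t_2(\tilde u_k)\tilde u_k,\dots)$ sits to the right of $t_{max}$. The cleanest route is to use the characterization $(u_k,w_k)\in\mc N_{\la,\mu}^{-}$ iff $(1+q)\|(u_k,w_k)\|^2<(\al+\ba-1+q)\io\bw[u_k,w_k]$, pass to the limit to obtain the non-strict inequality $(1+q)\|(u_0,w_0)\|^2\le(\al+\ba-1+q)\io\bw[u_0,w_0]$, i.e. $\phi_{u_0,w_0}^{\prime\prime}(1)\le0$, which together with $(u_0,w_0)\in\mc N_{\la,\mu}$ and $\mc N_{\la,\mu}^{0}=\{(0,0)\}$ (so the case $\phi''=0$ is impossible) yields $\phi_{u_0,w_0}^{\prime\prime}(1)<0$, i.e. $(u_0,w_0)\in\mc N_{\la,\mu}^{-}$.

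The main obstacle I anticipate is the borderline case where the limiting inequality becomes an equality, i.e. ruling out $(u_0,w_0)\in\mc N_{\la,\mu}^{0}$; this is precisely where the hypothesis $(\la,\mu)\in\Ga$ is essential, and it is handled by invoking the already-proved fact that $\mc N_{\la,\mu}^{0}=\{(0,0)\}$ combined with the strict positivity $\|(u_0,w_0)\|\ge A_0>0$ from the gap lemma. A secondary technical point is justifying convergence of the singular term $\io f(x)(u_k)_+^{1-q}dx$; since $0<1-q<1$ and $f\in L^{q^*}$ with $q^*$ chosen exactly so that Hölder with the Sobolev embedding closes (as in \eqref{e2}), strong convergence in $\h$ gives strong convergence in $L^{\al+\ba}$, hence convergence of these integrals — so this step, while needing care, is routine given the estimates already established in the proof of Lemma \ref{le1}.
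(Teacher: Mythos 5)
Your argument is correct and follows essentially the same route as the paper: pass to the limit in the Nehari identity and in the defining inequality $(1+q)\|\cdot\|^2<(\al+\ba-1+q)\io b(x)(\cdot)_+^\al(\cdot)_+^\ba dx$ (obtaining the non-strict version), use the gap bound from Lemma \ref{le2} to rule out the limit being $(0,0)$, and invoke $\mc N_{\la,\mu}^{0}=\{(0,0)\}$ to upgrade to the strict inequality, hence membership in $\mc N_{\la,\mu}^{-}$. The only cosmetic difference is that you invoke compact embeddings for the convergence of the integral terms, whereas strong convergence in $\h$ together with the continuous Sobolev embedding and the H\"older estimates of Lemma \ref{le1} already suffices, as the paper implicitly uses.
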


\begin{proof}
Let $\{(U_k,W_k)\}$ be a sequence in $\mc N_{\la,\mu}^{-}$ with $(U_k,W_k) \ra (U,W)$ in $\h$. Then we have
\begin{align*}
 \|(U_k,W_k)\|^2 &=\lim_{k\ra\infty}\|(U_k,W_k)\|^2\\
&= \lim_{k\ra\infty} \left[\io (\la f(x)(U_k)_{+}^{1-q}+\mu g(x)(W_k)_{+}^{1-q}) dx + \io b(x)(U_{k})_{+}^{\al}(W_{k})_{+}^{\ba} dx\right]\\
&= \io (\la f(x)U_{+}^{1-q}+\mu g(x)W_{+}^{1-q}) dx + \io b(x)U_{+}^{\al}W_{+}^{\ba} dx
\end{align*}
and
\begin{align*}
(1+q) \|(U,W)\| - & (\al+\ba-1+q) \io b(x)U_{+}^{\al}W_{+}^{\ba} dx \\
& =\lim_{k\ra\infty} \left[(1+q) \|(U_k,W_k)\|^2 - (\al+\ba-1+q) \io b(x)(U_{k})_{+}^{\al}(W_{k})_{+}^{\ba} dx\right]\leq 0,
\end{align*}
i.e. $(U,W)\in \ \mc N_{\la,\mu}^{-}\cap \mc N_{\la,\mu}^{0}$. Since $\{(U_k,W_k)\}\subset \mc N_{\la,\mu}^{-}$, from Lemma \ref{le2} we have
\[\|(U,W)\|= \lim_{k\ra\infty} \|(U_k,W_k)\|\geq A_{\la,\mu}>0,\]
that is, $(U,W)\not\equiv (0,0)$. It follows from Lemma \ref{le1}, that $(U,W)\not\in \mc N_{\la,\mu}^{0}$ for any $(\la,\mu)\in\Ga$. Thus $(U,W)\in \mc N_{\la,\mu}^{-}$.
That is, $\mc N_{\la,\mu}^{-}$ is a closed set in $Y$- topology for any $(\la,\mu)\in\Ga$.\QED

\end{proof}


\begin{Lemma}\label{le4}
 Let $\ee\in \mc N_{\la,\mu}^{\pm}$, then for any $\Phi=(\phi,\psi)\in C_{Y}$, there exists a number $\e>0$ and a continuous function $f:B_{\e}(0):=\{v=(v_1,v_2)\in\h : \|v\|<\e\}\ra \mb R^{+}$ such that
\[f(v_1,v_2)>0, f(0,0)=1\;\mbox{and}\; f(v_1,v_2)(u+v_1\phi, w+v_2\psi)\in \mc N_{\la,\mu}^{\pm}\;\mbox{for all}\; v\in B_{\e}(0).\]
\end{Lemma}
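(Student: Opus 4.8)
The plan is to use the implicit function theorem applied to a suitable ``Nehari-correcting'' function. For a fixed $\ee\in\n^{\pm}$ and fixed direction $\Phi=(\phi,\psi)\in C_Y$, I would introduce the map $F:\R\times B_{\delta_0}(0)\ra\R$ defined by
\[
F(t,v_1,v_2)=\ld J_{\la,\mu}^{\prime}\big(t(u+v_1\phi,w+v_2\psi)\big),\,t(u+v_1\phi,w+v_2\psi)\rd,
\]
that is,
\[
F(t,v_1,v_2)=t^2\|(u+v_1\phi,w+v_2\psi)\|^2-t^{1-q}K_{\la,\mu}(u+v_1\phi,w+v_2\psi)-t^{\al+\ba}\io b(x)(u+v_1\phi)_{+}^{\al}(w+v_2\psi)_{+}^{\ba}\,dx.
\]
Since $\ee\in\n$ we have $F(1,0,0)=0$. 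The key computation is that $\frac{\pa F}{\pa t}(1,0,0)=\phi_{u,w}^{\prime\prime}(1)\ne 0$, because $\ee\in\n^{\pm}$ means precisely $\phi_{u,w}^{\prime\prime}(1)\gtrless 0$ and, by the previous lemma, $\n^{0}=\{(0,0)\}$ so a nonzero element of $\n$ can never sit in $\n^0$. Hence the implicit function theorem yields $\e\in(0,\delta_0)$ and a $C^1$ (in particular continuous) function $f:B_\e(0)\ra\R^+$ with $f(0,0)=1$ and $F(f(v_1,v_2),v_1,v_2)=0$ for all $v=(v_1,v_2)\in B_\e(0)$; shrinking $\e$ if necessary we keep $f(v_1,v_2)>0$ by continuity. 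The identity $F(f(v),v)=0$ says exactly that $f(v_1,v_2)(u+v_1\phi,w+v_2\psi)\in\n$.

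It remains to check that the corrected point stays in the \emph{same} component $\n^{\pm}$. This follows again from continuity together with $\n^0=\{(0,0)\}$: the function $v\mapsto \phi_{u+v_1\phi,\,w+v_2\psi}^{\prime\prime}(f(v_1,v_2))$ is continuous on $B_\e(0)$, equals $\phi_{u,w}^{\prime\prime}(1)\ne 0$ at $v=0$, and (using Lemma~\ref{le2}) the corrected element is bounded away from $(0,0)$ in norm, hence can never land on $\n^0$; so it cannot change sign on a connected neighborhood of $0$. Shrinking $\e$ once more so that this continuous function keeps the sign of $\phi_{u,w}^{\prime\prime}(1)$ on all of $B_\e(0)$ gives $f(v_1,v_2)(u+v_1\phi,w+v_2\psi)\in\n^{\pm}$ for every $v\in B_\e(0)$, as claimed.

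The main technical obstacle is verifying the regularity needed to apply the implicit function theorem, since the singular term $u_{+}^{1-q}$ makes $J_{\la,\mu}$ merely lower semicontinuous and not $C^1$ on all of $Y$. The point, however, is that one only differentiates in the \emph{finite-dimensional} variables $(t,v_1,v_2)$ along the fixed smooth direction $\Phi$ and around the fixed base point $\ee$ which, being a minimizer-type element of $\n^{\pm}$, satisfies $u,w>0$ in $\Om$; on a small ball $B_\e(0)$ the perturbed functions $u+v_1\phi$ and $w+v_2\psi$ stay positive on compact subsets and the maps $v\mapsto \io f(x)(u+v_1\phi)_{+}^{1-q}\,dx$ etc. are $C^1$ by dominated convergence, using $f,g\in L^{q^*}$ and the integrability of $|u|^{-q}\phi$. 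Thus $F$ is genuinely $C^1$ near $(1,0,0)$ and the scheme goes through; the positivity/lower-order estimates are routine and I would not carry them out in detail.
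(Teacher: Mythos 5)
Your overall strategy is the same as the paper's: introduce the fibering/Nehari function $F(t,v)$ attached to $t(u+v_1\phi,w+v_2\psi)$, note $F(1,0,0)=0$ and $\frac{\pa F}{\pa t}(1,0,0)=\phi_{u,w}^{\prime\prime}(1)\gtrless 0$ for $\ee\in\n^{\pm}$, apply the implicit function theorem to get a continuous $f$ with $f(0,0)=1$, and shrink $\e$ so that the sign condition persists. The paper does exactly this, except that it works with $t^{q}$ times your $F$, i.e. $t^{1+q}\|(u+v_1\phi,w+v_2\psi)\|^2-t^{\al+\ba-1+q}\io b(x)(u+v_1\phi)_{+}^{\al}(w+v_2\psi)_{+}^{\ba}dx-K_{\la,\mu}(u+v_1\phi,w+v_2\psi)$, so that no fractional power of $t$ multiplies the singular term, and it preserves membership in $\n^{\pm}$ by continuity of $\frac{\pa F}{\pa t}(v,f(v))$ rather than by your ``$\n^{0}=\{(0,0)\}$ plus connectedness'' argument; the latter is also fine, but note that Lemma \ref{le2} is not what keeps the corrected element away from $(0,0)$ in the $\n^{+}$ case ($A_{\la,\mu}$ is an upper bound there) — you only need $f(v)(u+v_1\phi,w+v_2\psi)\to\ee\neq(0,0)$ as $v\to 0$.

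The one genuine problem is your closing regularity paragraph. You claim $F$ is $C^1$ in $(v_1,v_2)$ and justify this by asserting that the base point satisfies $u,w>0$ in $\Om$ and that $f(x)u_{+}^{-q}\phi$ is integrable. Neither fact is available at this stage: the lemma is stated for an arbitrary element of $\n^{\pm}$ (in the paper it is applied to the Ekeland sequence $(u_k,w_k)$, for which no positivity is known), and the integrability $\la f(x)u_{+}^{-q}v_1\phi\in L^{1}(\Om)$ is precisely what Lemma \ref{le8} and Corollary \ref{cc1} establish later, \emph{using} the present lemma — invoking it here is circular. Fortunately that differentiability in $v$ is not needed: to solve $F(f(v),v)=0$ with $f$ continuous it suffices that $F$ and $\frac{\pa F}{\pa t}$ be jointly continuous with $\frac{\pa F}{\pa t}(1,0,0)\neq 0$ (giving local strict monotonicity in $t$), and joint continuity of the three coefficient functionals $v\mapsto\|(u+v_1\phi,w+v_2\psi)\|^2$, $v\mapsto K_{\la,\mu}(u+v_1\phi,w+v_2\psi)$, $v\mapsto\io b(x)(u+v_1\phi)_{+}^{\al}(w+v_2\psi)_{+}^{\ba}dx$ follows from H\"older's inequality with $f,g\in L^{q^*}(\Om)$, $b\in L^{\infty}(\Om)$ and the Sobolev embedding, without ever differentiating the singular term. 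Replacing your $C^1$-in-$v$ claim by this continuity statement (which is how the paper proceeds) makes the argument complete.
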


\begin{proof}
We give the proof only for the case $\ee\in \mc N_{\la,\mu}^{+}$, the case $\mc N_{\la,\mu}^{-}$ may be preceded exactly. For any $C_{\h}$, we define $F:\h\times \mb R^{+}\ra \mb R$ as follows:
{\small \begin{align*}
F(v,t)&= t^{1+q}\|(u+v_1 \phi, w+v_2\psi)\|^2 - t^{\al+\ba-1+q}\io b(x) (u+v_1\phi)_{+}^{\al}(w+v_2\psi)_{+}^{\ba} dx\\
&\quad\quad-  K_{\la,\mu}(u+v_1\phi,w+v_2\psi)
\end{align*}}
Since $w\in \mc N_{\la,\mu}^{+}(\subset \mc N_{\la,\mu})$, we have that
\[F((0,0),1)= \uw- \aw - \io \bw =0,\]
and
\[\frac{\partial F}{\partial t}((0,0),1)= (1+q)\uw - (\al+\ba-1+q) \io \bw>0  .\]
Applying the implicit function Theorem at the point $((0,0),1),$ we have that there exists $\bar{\e}>0$ such that for $\|v\|<\bar{\e}$, $v\in \h$, the equation $F((v_1,v_2),t)=0$ has a unique continuous solution $t=f(v_1,v_2)>0.$ It follows from $F((0,0),1)=0$ that $f(0,0)=1$ and from $F((v_1,v_2),f(v_1,v_2))=0$ for $\|v\|<\bar\e$, $v\in\h$ that
{\small \begin{align*}
 0& = f^{1+q}(v) \|w+v\phi\|^2 - K_{\la,\mu}(u+v_1\phi,w+v_2\psi) - f^{\al+\ba-1+q}(v) \io b(x) (u+v_1\phi)_{+}^{\al} (w+v_2\psi)_{+}^{\ba} dx\\
 & = \frac{\|f(v)(u+v_1\phi,w+v_2\psi)\|^2-K_{\la,\mu}(f(v)(u+v_1\phi), f(v)(w+v_2\psi)) }{f^{1-q}(v)}\\
 &\quad\quad\quad-\frac{ \io b(x)(f(v)(u+v_1\phi))_{+}^{\al} (f(v)(w+v_2\psi))_{+}^{\ba}dx}{f^{1-q}(v)}
\end{align*}}
that is,
\[f(v_1,v_2)(u+v_1\phi,w+v_2\psi)\in \mc N_{\la,\mu}\;\mbox{for all}\; v\in \h, \|v\|<\tilde{\e}.\]
Since $\frac{\partial F}{\partial t}((0,0),1)>0$ and
{\small \begin{align*}
 &\frac{\partial F}{\partial t}((v_1,v_2),f(v_1,v_2))\\
 & = (1+q)f^{q}(v) \|(u+v_1\phi,w+v_2\psi\|^2 - (\al+\ba-1+q) f^{\al+\ba-1+q-1}(v) \io b(x)(u+v_1\phi)_{+}^{\al} (w+v_2\psi)_{+}^{\ba}\\
 & = \frac{(1+q)\|(f(v)(u+v_1\phi),f(v)(w+v_1\psi))\|^2}{f^{2-q}(v)}\\
 &\quad\quad-\frac{(\al+\ba-1+q) \io b(x)(f(v)(u+v_1\phi))_{+}^{\al} (f(v)(w+v_2\psi))_{+}^{\ba} dx}{f^{2-q}(v)}
\end{align*}}
we can take $\e>0$ possibly smaller $(\e<\bar\e)$ such that for any $v=(v_1,v_2)\in\h$, $\|v\|<\e$,
{\small\[ (1+q)\|(f(v)(u+v_1\phi),f(v)(w+v_2\psi))\|^2 - (\al+\ba-1+q)\io b(x)(f(v)(u+v_1\phi))_{+}^{\al} (f(v)(w+v_2\psi))_{+}^{\ba} dx >0, \]}
that is,
\[f(v_1,v_2)(u+v_1\phi, w+v_2\psi) \in \mc N_{\la,\mu}^{+}\;\mbox{for all}\; v=(v_1,v_2)\in B_{\e}(0).\]
This completes the proof of Lemma. \QED
\end{proof}

\begin{Lemma}\label{le5}
$J_\la$ is bounded below and coercive on $\mc N_{\la,\mu}$.
\end{Lemma}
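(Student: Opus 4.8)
The plan is to show that $J_{\la,\mu}$ restricted to $\n$ can be bounded below in terms of $\|(u,w)\|$ by a function that tends to $+\infty$, which yields both coercivity and boundedness from below simultaneously. First I would start from the Nehari identity \eqref{eq2}: for $(u,w)\in\n$ we have $\uw = \aw + \io\bw$. The key idea is to substitute this identity into the expression for $J_{\la,\mu}$ in order to eliminate one of the two ``bad'' terms. Using \eqref{eq2} to replace $\io\bw$ by $\uw - \aw$ in
\[
J_{\la,\mu}(u,w) = \frac12\uw - \frac{1}{1-q}\aw - \frac{1}{\al+\ba}\io\bw,
\]
one obtains
\[
J_{\la,\mu}(u,w) = \left(\frac12 - \frac{1}{\al+\ba}\right)\uw - \left(\frac{1}{1-q} - \frac{1}{\al+\ba}\right)\aw.
\]
Since $2 < \al+\ba$ the coefficient of $\uw$ is strictly positive, and since $1-q < 1 < 2 < \al+\ba$ the coefficient of $\aw$ is also strictly positive; so the only remaining task is to control the subtracted term $\aw$ from above by a sublinear power of $\|(u,w)\|$.

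The second step is to invoke the estimate \eqref{e2} established in the proof of Lemma \ref{le1}, namely
\[
\aw \leq \La^{\frac{1+q}{2}}\left(\frac{\|(u,w)\|}{\sqrt{S}}\right)^{1-q},
\]
which holds for every $(u,w)\in\h$ by H\"older's inequality and the fractional Sobolev inequality (here $\La = (|\la|\|f\|_{q^*})^{\frac{2}{1+q}} + (|\mu|\|g\|_{q^*})^{\frac{2}{1+q}}$). Plugging this into the reduced expression for $J_{\la,\mu}$ gives
\[
J_{\la,\mu}(u,w) \geq \frac{\al+\ba-2}{2(\al+\ba)}\uw - \frac{\al+\ba-1+q}{(1-q)(\al+\ba)}\,\La^{\frac{1+q}{2}}\,S^{-\frac{1-q}{2}}\,\|(u,w)\|^{1-q}
\]
for all $(u,w)\in\n$. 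Writing $t = \|(u,w)\|\ge 0$, the right-hand side has the form $c_1 t^2 - c_2 t^{1-q}$ with $c_1, c_2 > 0$ and $0 < 1-q < 2$, which is a continuous function of $t$ on $[0,\infty)$ that is bounded below (its infimum is attained at some finite $t_*>0$) and tends to $+\infty$ as $t\to\infty$. This immediately gives both that $J_{\la,\mu}$ is bounded below on $\n$ and that it is coercive on $\n$ (if $\|(u_k,w_k)\|\to\infty$ along a sequence in $\n$, then $J_{\la,\mu}(u_k,w_k)\to+\infty$).

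I do not anticipate a serious obstacle here; the argument is essentially algebraic once \eqref{eq2} and \eqref{e2} are in hand. The one point requiring a little care is making sure all three coefficients ($\tfrac12 - \tfrac1{\al+\ba}$, $\tfrac1{1-q} - \tfrac1{\al+\ba}$, and after substitution the coefficient $\tfrac{\al+\ba-2}{2(\al+\ba)}$ of $\uw$) are strictly positive, which follows from the standing hypotheses $0<q<1$ and $\al+\ba>2$; no restriction on $(\la,\mu)$ beyond membership in $\Ga$ (or indeed any $(\la,\mu)\ne(0,0)$) is actually needed for this particular lemma, though stating it for $(\la,\mu)\in\Ga$ is harmless. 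It is also worth noting that the same substitution could instead be used to eliminate $\aw$, yielding $J_{\la,\mu}(u,w) = (\tfrac12 - \tfrac1{1-q})\uw + (\tfrac1{1-q} - \tfrac1{\al+\ba})\io\bw$; here the coefficient of $\uw$ is \emph{negative}, so this alternative form is less convenient, and I would present the first reduction.
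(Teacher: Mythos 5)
Your argument is correct and follows essentially the same route as the paper: substitute the Nehari identity into $J_{\la,\mu}$ to obtain $\left(\frac12-\frac{1}{\al+\ba}\right)\uw-\left(\frac{1}{1-q}-\frac{1}{\al+\ba}\right)\aw$, bound $\aw$ by \eqref{e2}, and analyze the one-variable function $\rho(t)=ct^{2}-dt^{1-q}$ to get boundedness below and coercivity. No substantive difference from the paper's proof.
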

\begin{proof}
For $\ee\in \mc N_{\la,\mu}$, we obtain from \eqref{e2} that
\begin{align}\label{s1}
J_{\la,\mu}(u,w)
&= \left(\frac{1}{2} -\frac{1}{\al+\ba}\right)\uw - \left(\frac{1}{1-q} -\frac{1}{\al+\ba}\right)\aw\notag\\
&\geq \left(\frac{1}{2} -\frac{1}{\al+\ba}\right)\uw - \left(\frac{1}{1-q} -\frac{1}{\al+\ba}\right) \La^{\frac{1+q}{2}} \left(\frac{\|\ee\|}{\mathrm{\sqrt{S}}}\right)^{1-q}.
\end{align}
Now consider the function $\rho: \mb R^{+}\ra \mb R$ as $\rho(t)= c t^2 - d t^{1-q}$, where $c,$ $d$ are both positive constants.
One can easily show that $\rho$ is convex($\rho^{\prime\prime}(t)>0$ for all $t>0$) with $\rho(t)\ra 0$ as $t\ra 0$ and $\rho(t)\ra \infty$ as $t\ra\infty$. $\rho$ achieves its minimum at $t_{min}=[\frac{d(1-q)}{2c}]^{\frac{1}{1+q}}$ and
\begin{align*}
\rho(t_{min})&= c\left[\frac{d(1-q)}{2c}\right]^{\frac{2}{1+q}}- d\left[\frac{d(1-q)}{2c}\right]^{\frac{1-q}{1+q}}= -\frac{(1+q)}{2} d^{\frac{2}{1+q}}\left(\frac{1-q}{2c}\right)^{\frac{1-q}{1+q}}.
\end{align*}
Applying $\rho(t)$ with $c=\left(\frac{1}{2} -\frac{1}{\al+\ba}\right)$, $d=\left(\frac{1}{1-q} -\frac{1}{\al+\ba}\right)\La^{\frac{1+q}{2}} \left(\frac{1}{\sqrt{S}}\right)^{1-q}$ and $t=\|\ee\|$, $\ee\in \mc N_{\la,\mu}$, we obtain from \eqref{s1} that
\[\lim_{\|\ee\|\ra\infty} J_{\la,\mu}(u,w)\geq \lim_{t\ra\infty} \rho(t) = \infty,\]
since $0<q<1$. That is $J_{\la,\mu}$ is coercive on $\mc N_{\la,\mu}$. Moreover it follows from \eqref{s1} that
\begin{align}\label{a1}
J_{\la,\mu}(u,w)\geq \rho(t)\geq \rho(t_{min})(\mbox{a constant}),
\end{align}
i.e
{\small \[J_{\la,\mu}(u,w)\geq -\frac{(1+q)}{2} d^{\frac{2}{1+q}}\left(\frac{1-q}{2c}\right)^{\frac{1-q}{1+q}}= -\frac{(1+q)(\al+\ba-2)}{(1-q)(\al+\ba)} \left(\frac{\al+\ba-1+q}{2(\al+\ba-2)}\right)^{\frac{2}{1+q}}\La \left(\frac{1}{\sqrt{S}}\right)^{\frac{2(1-q)}{1+q}}.\]}
Thus $J_{\la,\mu}$ is bounded below on $\mc N_{\la,\mu}$.\QED
\end{proof}
\section{Existence of Solutions in $\mc N_{\la,\mu}^{\pm}$}

Now from Lemma \ref{le3}, $\mc N_{\la,\mu}^{+}\cup \mc N_{\la,\mu}^{0}$ and $\mc N_{\la,\mu}^{-}$ are two closed sets in $\h$ provided $(\la,\mu)\in\Ga$. Consequently, the Ekeland variational principle can be applied to the problem of finding the infimum of $J_{\la,\mu}$ on both $\mc N_{\la,\mu}^{+}\cup \mc N_{\la,\mu}^{0}$ and $\mc N_{\la,\mu}^{-}$.
First, consider $\{(u_k,w_k)\}\subset\mc N_{\la,\mu}^{+}\cup \mc N_{\la,\mu}^{0}$  with the following properties:
\begin{align}
J_{\la,\mu}(u_k,w_k)&< \inf_{\ee\in\mc N_{\la,\mu}^{+}\cup \mc N_{\la,\mu}^{0}} J_{\la,\mu}(u,w) +\frac{1}{k},\label{c1}\\
J_{\la,\mu}(u,w)&\geq J_{\la,\mu}(u_k,w_k) -\frac{1}{k} \|(u-u_k,w-w_k)\|\;\mbox{for all}\; \ee\in \mc N_{\la,\mu}^{+} \cup \mc N_{\la,\mu}^{0}\label{c2}.
\end{align}
\begin{Lemma}
Show that the sequence $\{(u_k,w_k)\}$ is bounded in $\mc N_{\la,\mu}$. Moreover, there exists $0\not\equiv \ee \in \h$ such that $(u_k,w_k)\rightharpoonup \ee$ weakly in $\h$.
\end{Lemma}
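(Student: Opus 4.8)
The plan is to establish boundedness first, then extract a weakly convergent subsequence, and finally rule out the weak limit being zero.

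\textbf{Step 1: Boundedness of the minimizing sequence.}
Since $\{(u_k,w_k)\}\subset \mc N_{\la,\mu}^{+}\cup \mc N_{\la,\mu}^{0}\subset\mc N_{\la,\mu}$, Lemma \ref{le5} applies directly: the functional $J_{\la,\mu}$ is coercive on $\mc N_{\la,\mu}$. From \eqref{c1} the values $J_{\la,\mu}(u_k,w_k)$ are bounded above (by $\inf_{\mc N_{\la,\mu}^{+}\cup\mc N_{\la,\mu}^{0}} J_{\la,\mu}+1$, which is finite since by Lemma \ref{le5} the infimum is bounded below, and it is $<\infty$ because $\mc N_{\la,\mu}^{+}\neq\emptyset$). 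If $\|(u_k,w_k)\|$ were unbounded, then along a subsequence $\|(u_k,w_k)\|\ra\infty$, and coercivity would force $J_{\la,\mu}(u_k,w_k)\ra\infty$, contradicting the upper bound. Hence $\{(u_k,w_k)\}$ is bounded in $\h$.

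\textbf{Step 2: Weak convergence.}
Since $\h=X_0\times X_0$ is a reflexive Banach space and $\{(u_k,w_k)\}$ is bounded, there exists $\ee=(u,w)\in\h$ and a subsequence (not relabeled) with $(u_k,w_k)\rightharpoonup (u,w)$ weakly in $\h$. By the compact embedding $X_0\hookrightarrow\hookrightarrow L^{r}(\Om)$ for $r<2^*_s$ (in particular for $r=\al+\ba$ and $r=\al+\ba-1+q$ via the exponent $q^*$), we may also assume $u_k\ra u$, $w_k\ra w$ strongly in $L^{\al+\ba}(\Om)$ and a.e. in $\Om$, and $(u_k)_+\ra u_+$, $(w_k)_+\ra w_+$ in the same sense.

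\textbf{Step 3: The weak limit is nontrivial.}
Suppose for contradiction that $(u,w)=(0,0)$. By strong $L^{\al+\ba}$ convergence, $\aw[u_k,w_k]=\la\io f(x)(u_k)_+^{1-q}+\mu\io g(x)(w_k)_+^{1-q}\ra 0$ and $\io b(x)(u_k)_+^{\al}(w_k)_+^{\ba}\,dx\ra 0$; indeed the first follows from H\"older with $f,g\in L^{q^*}$ and the second from $b\in L^{\infty}$ together with strong convergence of $(u_k)_+^{\al}(w_k)_+^{\ba}$ in $L^{1}$. From the Nehari identity \eqref{eq2}, $\|(u_k,w_k)\|^2=\aw[u_k,w_k]+\io b(x)(u_k)_+^{\al}(w_k)_+^{\ba}\,dx\ra 0$, so $(u_k,w_k)\ra (0,0)$ strongly in $\h$. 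But this contradicts the gap structure of Lemma \ref{le2}: every element of $\mc N_{\la,\mu}^{+}$ has norm $<A_{\la,\mu}$, and more importantly, if $(u_k,w_k)\in\mc N_{\la,\mu}^{0}$ then $(u_k,w_k)=(0,0)$ by the previous lemma, whereas if $(u_k,w_k)\in\mc N_{\la,\mu}^{+}$ we derive (using \eqref{e2} and \eqref{e3} as in Lemma \ref{le2}) a strictly positive lower bound on $\|(u_k,w_k)\|$ — more directly, $\aw[u_k,w_k]>0$ is forced for points of $\mc N_{\la,\mu}^{+}$ with positive energy contribution, giving via $\eqref{e2}$ that $\|(u_k,w_k)\|$ cannot tend to $0$ unless the whole Nehari constraint degenerates. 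The cleanest route: on $\mc N_{\la,\mu}^{+}$ one has $(\al+\ba-2)\|(u_k,w_k)\|^2<(\al+\ba-1+q)\aw[u_k,w_k]\leq(\al+\ba-1+q)\La^{\frac{1+q}{2}}(\|(u_k,w_k)\|/\sqrt{S})^{1-q}$, which for $(u_k,w_k)\ra 0$ is impossible unless $\aw[u_k,w_k]$ stays comparable to a positive power of the norm; combined with $\aw[u_k,w_k]\ra 0$ this forces $J_{\la,\mu}(u_k,w_k)\ra 0$, contradicting $\inf_{\mc N_{\la,\mu}^{+}} J_{\la,\mu}<0$ (which holds because for any $\ee$ with $\aw>0$ one checks $\phi_{u,w}(t_1)<0$ from Lemma \ref{le1}). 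Hence $(u,w)\not\equiv(0,0)$.

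\textbf{Main obstacle.}
The delicate point is Step 3: ruling out $(u,w)=(0,0)$ requires knowing that $\inf_{\mc N_{\la,\mu}^{+}\cup\mc N_{\la,\mu}^{0}} J_{\la,\mu}<0$ and exploiting it, since strong convergence to zero would push the energy to $0$. This uses the fiber-map analysis of Lemma \ref{le1}, namely that at the first critical point $t_1$ one has $J_{\la,\mu}(t_1u,t_1w)=\inf_{t\geq 0}J_{\la,\mu}(tu,tw)<0$ whenever $\aw>0$; establishing this strict negativity (by evaluating $\phi_{u,w}$ near $t=0^+$, where the $-t^{1-q}\aw/(1-q)$ term dominates) is the technical heart of the argument.
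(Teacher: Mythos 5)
Your proof is correct, and Steps 1--2 coincide with the paper's argument (coercivity from Lemma \ref{le5} plus the upper bound \eqref{c1} gives boundedness; reflexivity plus compact embedding gives the weak limit). Where you genuinely diverge is Step 3. The paper argues directly: it first shows that \emph{every} element of $\mc N_{\la,\mu}^{+}$ has negative energy via the algebraic inequality $\io\bw<\frac{1+q}{\al+\ba-1+q}\uw$ (so $\inf_{\mc N_{\la,\mu}^{+}\cup\mc N_{\la,\mu}^{0}}J_{\la,\mu}<0$), and then uses weak lower semicontinuity along the minimizing sequence to conclude $J_{\la,\mu}(u,w)\leq\liminf_k J_{\la,\mu}(u_k,w_k)=\inf<0$, which forces $\ee\not\equiv(0,0)$ since $J_{\la,\mu}(0,0)=0$. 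You instead argue by contradiction: if the weak limit were zero, strong $L^{\al+\ba}$ convergence kills $K_{\la,\mu}(u_k,w_k)$ and the $b$-term, the Nehari identity \eqref{eq2} then forces $\|(u_k,w_k)\|\ra0$, hence $J_{\la,\mu}(u_k,w_k)\ra 0$, contradicting $J_{\la,\mu}(u_k,w_k)\ra\inf<0$; and you get the strict negativity of the infimum from the fibering map ($\phi_{u,w}(t_1)<0$ because $\phi_{u,w}<0$ near $t=0^+$ when $K_{\la,\mu}(u,w)>0$) rather than from the paper's pointwise estimate on $\mc N_{\la,\mu}^{+}$. Both routes rest on the same two pillars (negativity of the infimum and convergence of the non-norm terms), so neither buys much over the other; the paper's version is slightly leaner since it avoids the contradiction and, as a by-product of the pointwise inequality, also yields that $(u_k,w_k)\in\mc N_{\la,\mu}^{+}$ for large $k$, which is used later. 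One caveat: in the middle of your Step 3 you assert that Lemma \ref{le2} gives a strictly positive lower bound on norms of elements of $\mc N_{\la,\mu}^{+}$; that is false (the lower bound $A_0$ is for $\mc N_{\la,\mu}^{-}$, while $\mc N_{\la,\mu}^{+}$ only carries the upper bound $A_{\la,\mu}$), but since your final ``cleanest route'' does not use this claim, it is a stray remark rather than a gap.
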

\begin{proof}
From equations \eqref{a1} and \eqref{c1}, we have
\[c t^2 - d t^{1-q} = \rho(t)\leq J_{\la,\mu}\ee<\inf_{\ee\in\mc N_{\la,\mu}^{+}\cup \mc N_{\la,\mu}^{0}} J_{\la,\mu}\ee +\frac{1}{k}\leq C_5,\]
for sufficiently large $k$ and a suitable positive constant.
Hence putting $t= \|(u_k,w_k)\|$ in the above equation, we obtain $\{(u_k,w_k)\}$ is bounded.

Let $\{(u_k,w_k)\}$ is bounded in $\h$. Then, there exists a subsequence of $\{(u_k,w_k)\}_k$, still denoted by $\{(u_k,w_k)\}_k$ and $\ee\in \h$ such that
$(u_k,w_k)\rightharpoonup \ee \;\mbox{ weakly in}\; \h$, $(u_k,w_k)(\cdot)\ra \ee(\cdot)$ strongly in $(L^{r}(\Om))^2$ for $1\leq  r<p^{*}_s$ and $u_k(\cdot)\ra u(\cdot)$, $w_k(\cdot)\ra w(\cdot)$ a.e. in $\Om$.

\noi For any $\ee\in \mc N_{\la,\mu}^{+}$, we have from $0<q<1$, $2<\al+\ba<2^*_s$ that
\begin{align*}
J_{\la,\mu}(u,w)
&= \left(\frac{1}{2} -\frac{1}{1-q}\right)\uw + \left(\frac{1}{1-q} -\frac{1}{\al+\ba}\right)\io\bw\\
&< \left(\frac{1}{2} -\frac{1}{1-q}\right)\uw + \left(\frac{1}{1-q} -\frac{1}{\al+\ba}\right)\frac{1+q}{\al+\ba-1+q}\uw\\
&= \left(\frac{1}{\al+\ba} -\frac{1}{2}\right)\frac{(1+q)}{(1-q)}\uw<0,
\end{align*}
which means that $\inf_{\mc N_{\la,\mu}^{+}} J_{\la,\mu}<0$. Now for $(\la,\mu)\in\Ga$, we know from Lemma \ref{le1}, that $\mc N_{\la,\mu}^{0}=\{(0,0)\}$.
Together, these imply that $(u_k,w_k) \in \mc N_{\la,\mu}^{+}$ for $k$ large and
\[\inf_{\ee\in\mc N_{\la,\mu}^{+}\cup\mc N_{\la,\mu}^{0}}J_{\la,\mu}(u,w)= \inf_{\ee\in\mc N_{\la,\mu}^{+}} J_{\la,\mu}(u,w)<0.\]
Therefore, by weak lower semi-continuity of norm,
\[J_{\la,\mu}(u,w) \leq \liminf_{k\ra\infty} J_{\la,\mu}(u_k,w_k) = \inf_{\mc N_{\la,\mu}^{+}\cup\mc N_{\la,\mu}^{0}}J_{\la,\mu}<0, \]
that is, $\ee\not\equiv 0$ and $\ee\in Y$.\QED
\end{proof}
\begin{Lemma}\label{le6}
Suppose $(u_k,w_k)\in \mc N_{\la,\mu}^{+}$ such that $(u_k,w_k)\rightharpoonup \ee$ weakly in $\h$. Then for $(\la,\mu)\in \Ga$,
\begin{align}\label{s2}
(1+q) \io (\la f(x)u_{+}^{1-q}+\mu g(x)w_{+}^{1-q}) dx - (\al+\ba-2) \io b(x)u_{+}^{\al}w_{+}^{\ba} dx >0.
\end{align}
Moreover, there exists a constant $C_2>0$ such that
\begin{align}\label{s5}
(1+q) \uk - (\al+\ba-1+q) \io \bk \geq C_2 > 0.
\end{align}
\end{Lemma}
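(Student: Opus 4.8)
The plan is to derive \eqref{s2} from \eqref{s5} by passing to the limit, so that essentially all the work goes into \eqref{s5}. Since $(u_k,w_k)\in\n^{+}\subset\n$ we have $\uk=\ak+\io\bk$, hence the left-hand side of \eqref{s5} equals $(1+q)\ak-(\al+\ba-2)\io\bk$. From the compact embedding $\h\hookrightarrow(L^{\al+\ba}(\Om))^2$ used in the preceding Lemma, $(u_k,w_k)\to\ee$ strongly in $(L^{\al+\ba}(\Om))^2$; therefore $(u_k)_+^{1-q}\to u_+^{1-q}$ and $(w_k)_+^{1-q}\to w_+^{1-q}$ in $L^{(\al+\ba)/(1-q)}(\Om)$ (the conjugate exponent of $q^*$), so $\ak\to\aw$, while $(u_k)_+^{\al}(w_k)_+^{\ba}\to u_+^{\al}w_+^{\ba}$ in $L^1(\Om)$ and $b\in L^{\infty}(\Om)$ give $\io\bk\to\io\bw$. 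Passing to the limit in \eqref{s5} then yields \eqref{s2}. So it suffices to prove \eqref{s5}.

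I would prove \eqref{s5} by contradiction. Since $(u_k,w_k)\in\n^{+}$,
\[\phi_{u_k,w_k}^{\prime\prime}(1)=(1+q)\uk-(\al+\ba-1+q)\io\bk>0 .\]
Suppose this quantity is not bounded below by a positive constant; then, along a subsequence, $\phi_{u_k,w_k}^{\prime\prime}(1)\to0$. Using the alternative expression for $\phi_{u_k,w_k}^{\prime\prime}(1)$ valid on $\n$, namely $(2-\al-\ba)\uk+(\al+\ba-1+q)\ak$, this forces $\ak=\frac{\al+\ba-2}{\al+\ba-1+q}\uk+o(1)$. Passing to a further subsequence, assume $\|(u_k,w_k)\|\to\ell$; by weak lower semicontinuity of the norm together with $\ee\not\equiv(0,0)$ (from the preceding Lemma), $\ell\geq\|\ee\|>0$, whence $\ak\to\frac{\al+\ba-2}{\al+\ba-1+q}\ell^2>0$. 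In particular $\ak>0$ for $k$ large, so the fibering analysis of Lemma \ref{le1} applies to each such $(u_k,w_k)$, and $\io\bk=\uk-\ak\to\frac{1+q}{\al+\ba-1+q}\ell^2$.

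The contradiction comes from evaluating $\psi_{u_k,w_k}$ at its maximum point $t_{\max,k}$ in two ways. On the one hand, the exact value obtained in the proof of Lemma \ref{le1},
\[\psi_{u_k,w_k}(t_{\max,k})=\frac{1+q}{\al+\ba-1+q}\left(\frac{\al+\ba-2}{\al+\ba-1+q}\right)^{\frac{\al+\ba-2}{1+q}}\frac{\|(u_k,w_k)\|^{\frac{2(\al+\ba-1+q)}{1+q}}}{[\ak]^{\frac{\al+\ba-2}{1+q}}}-\io\bk ,\]
tends to $0$: substituting $\|(u_k,w_k)\|^2\to\ell^2$, $\ak\to\frac{\al+\ba-2}{\al+\ba-1+q}\ell^2$, $\io\bk\to\frac{1+q}{\al+\ba-1+q}\ell^2$ and cancelling powers of $\ell$ with the identity $(\al+\ba-1+q)=(\al+\ba-2)+(1+q)$, the first term converges to $\frac{1+q}{\al+\ba-1+q}\ell^2$, which exactly cancels the last term. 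On the other hand, estimate \eqref{e40} applies because $(\la,\mu)\in\Ga$ makes $E_{\la,\mu}>0$, giving $\psi_{u_k,w_k}(t_{\max,k})\geq E_{\la,\mu}\|(u_k,w_k)\|^{\al+\ba}\to E_{\la,\mu}\ell^{\al+\ba}>0$, a contradiction. Hence $\phi_{u_k,w_k}^{\prime\prime}(1)\geq C_2>0$ for some constant $C_2$, which is \eqref{s5}. The delicate point, and the main obstacle, is exactly the legitimacy of applying the fibering estimate \eqref{e40} uniformly along the sequence: this needs $\ak>0$ for $k$ large, which the hypothesis $\phi_{u_k,w_k}^{\prime\prime}(1)\to0$ together with $\ell>0$ supplies, and one must be careful to run all the convergences ($\|(u_k,w_k)\|$, $\ak$, $\io\bk$, and $\psi_{u_k,w_k}(t_{\max,k})$) along a single common subsequence.
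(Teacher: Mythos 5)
Your proof is correct, and it is a genuine (if modest) reorganization of the paper's argument rather than a copy of it. The paper proceeds in the opposite order: it first passes to the limit in $(1+q)\ak-(\al+\ba-2)\io\bk\geq 0$, assumes for contradiction that the limit quantity vanishes, transfers the degenerate relation to the weak limit $\ee$ via weak lower semicontinuity of the norm (obtaining $\aw\geq\frac{\al+\ba-2}{\al+\ba-1+q}\uw$ and $\io\bw\geq\frac{1+q}{\al+\ba-1+q}\uw$), and then plugs these into the estimate \eqref{e40} evaluated at the fixed function $\ee$ to get $0<E_{\la,\mu}\|\ee\|^{\al+\ba}\leq 0$; the sequence-level bound \eqref{s5} is then read off from \eqref{s2} by the strong $L^{\al+\ba}$ convergences. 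You instead prove \eqref{s5} first, deriving the contradiction along the sequence by evaluating $\psi_{u_k,w_k}$ at its maxima $t_{\max,k}$ and showing these values tend to $0$ while \eqref{e40} keeps them above $E_{\la,\mu}\ell^{\al+\ba}>0$, and then obtain \eqref{s2} by passing to the limit in \eqref{s5} (correctly rewriting the left side via the Nehari identity so that only the integral terms, which converge, appear). The underlying mechanism is identical in both routes: the uniform positivity $E_{\la,\mu}>0$ for $(\la,\mu)\in\Ga$, the nontriviality $\ee\not\equiv(0,0)$ inherited from the preceding lemma, and the compactness of the embedding into $L^{\al+\ba}$. What the paper's order buys is that the fibering estimate is applied only once, to the fixed limit function, with no need to track $t_{\max,k}$, the positivity of $\ak$ for large $k$, or common subsequences; what your order buys is that weak lower semicontinuity is needed only to guarantee $\ell\geq\|\ee\|>0$ rather than in the quantitative step, and the two conclusions come out in the logically natural order (uniform bound first, limit inequality second). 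You were right to flag the delicate points — $\ak>0$ for large $k$ so that $t_{\max,k}$ is defined, and running all convergences along one subsequence — and you handle them adequately.
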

\begin{proof}
For $\{(u_k,w_k)\}\subset \mc N_{\la,\mu}^{+}(\subset \mc N_{\la,\mu})$, we have
\begin{align*}
&(1+q) \aw  - (\al+\ba-2) \io \bw\\
= &\lim_{k\ra\infty} \left[(1+q)\ak - (\al+\ba-2) \io \bk \right]\\
=&\lim_{k\ra\infty}\left[(1+q)\uk - (\al+\ba-1+q) \io\bk\right]\geq 0.
\end{align*}
Now, we can argue by a contradiction and assume that
\begin{align}\label{s3}
(1+q) \aw - (\al+\ba-2) \io \bw = 0.
\end{align}
Using $(u_k,w_k)\in \mc N_{\la,\mu}$, the weak lower semi continuity of norm and \eqref{s3} we have that
\begin{align*}
0= &\lim_{k\ra\infty} \left[\uk- \ak - \io \bk \right]\\
\geq & \uw- \aw dx -  \io \bw dx\\
=&\left\{\begin{array}{lr}
\uw-  \frac{\al+\ba-1+q}{1+q}\io \bw \\
\uw-  \frac{\al+\ba-1+q}{\al+\ba-2}\aw .
\end{array}\right.
\end{align*}qq
Thus for any $(\la,\mu)\in \Ga$ and $\ee\not\equiv 0$, by similar arguments as those in \eqref{e40} we have that
{\small\begin{align*}
0&< E_{\la,\mu}  \|\ee\|^{\al+\ba}\\
&\leq \frac{(1+q)}{(\al+\ba-1+q)} \left(\frac{\al+\ba-2}{\al+\ba-1+q}\right)^{\frac{\al+\ba-2}{1+q}} \frac{\|\ee\|^{\frac{2(\al+\ba-1+q)}{1+q}}}{\left[\aw\right]^{\frac{\al+\ba-2}{1+q}}}-
 \io \bw \\
&= \frac{(1+q)}{(\al+\ba-1+q)} \left(\frac{\al+\ba-2}{\al+\ba-1+q}\right)^{\frac{\al+\ba-2}{1+q}}\frac{\|\ee\|^{ \frac{2(\al+\ba-1+q)}{1+q}}}{\left(\frac{\al+\ba-2}{\al+\ba-1+q}\uw\right)^{\frac{\al+\ba-2}{1+q}}}-\frac{(1+q)}{(\al+\ba-1+q)}\uw\\
&=0,
\end{align*}}
which is clearly impossible. Now by \eqref{s2}, we have that
\begin{align}\label{s4}
(1+q)  \ak - (\al+\ba-2) \io \bk \geq C_2
\end{align}
for sufficiently large $k$ and a suitable positive constant $C_2$. This, together with the fact that $(u_k,w_k)\in \mc N_{\la,\mu}$ we obtain equation \eqref{s5}.\QED
\end{proof}

\noi Fix $(\phi,\psi)\in C_{\h}$ with $\phi,\psi\geq 0$. Then we apply Lemma \ref{le4} with $(u_k,w_k)\in \mc N_{\la,\mu}^{+}$ ($k$ large enough such that $\frac{(1-q)C_1}{k}<C_2$), we obtain a sequence of functions $f_k: B{\e_k}(0)\subset Y\ra \mb R$ such that
  $f_{k}(0,0)=1$ and $f_{k}(s_1,s_2)(u_k+s_1\phi,w_k+ s_2\psi)\in \mc N_{\la,\mu}^{+}$ for all $s=(s_1,s_2)\in B_{\e_k}(0)$. It follows from $(u_k,w_k)\in \mc N_{\la,\mu}$ and $f_{k}(s_1,s_2)(u_k+s_1\phi,w_k+ s_2\psi)\in \mc N_{\la,\mu}$ that
\begin{align}\label{f1}
\|(u_k,w_k)\|^2 - \ak - \io\bk dx=0
\end{align}
and
{\small\begin{align}\label{f2}
f_{k}^{2}(s_1,s_2)&\|(u_k+s_1\phi,w_k+s_2\psi)\|^2 - f_{k}^{1-q}(s_1,s_2) K(u_k+s_1\phi,w_k+s_2\phi) \notag\\
&- f_{k}^{\al+\ba}(s_1,s_2)\io b(x)(u_k+s_1\phi)_{+}^{\al}(w_k+s_2\psi)_{+}^{\ba} dx =0.
\end{align}}
 Choose $0<\rho <\e_k$, and $(s_1,s_2)=(\rho v_1,\rho v_2)$  with $\|v\|<1$ then we find $f_{k}(v_1,v_2)$ such that $f_{k}(0,0)=1$ and $f_{k}(v_1,v_2)(u_k+v_1\phi,w_k+ v_2\psi)\in \mc N_{\la,\mu}^{+}$ for all $v\in B_{\rho}(0)$.
\begin{Lemma}\label{le7}
For $(\la,\mu)\in \Ga$ we have $|\langle f^{\prime}_{k}(0,0), (v_1,v_2) \rangle|$ is finite for every $0\leq v=(v_1,v_2)\in C_{\h}$ with $\|v\|\leq1$.
\end{Lemma}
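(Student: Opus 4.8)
The plan is to obtain an explicit formula for $\langle f_k'(0,0),(v_1,v_2)\rangle$ by differentiating the defining relation $F((s_1,s_2),f_k(s_1,s_2))=0$ from Lemma \ref{le4}, and then to estimate each term in that formula using the boundedness of $\{(u_k,w_k)\}$ in $\h$, the growth of $b$ in $L^\infty(\Om)$, the Sobolev embedding, and the crucial lower bound \eqref{s5} which keeps the denominator away from zero. First I would recall that $f_k$ is the implicit solution of
\[
F((s_1,s_2),t)= t^{1+q}\|(u_k+s_1\phi,w_k+s_2\psi)\|^2 - t^{\al+\ba-1+q}\io b(x)(u_k+s_1\phi)_+^\al (w_k+s_2\psi)_+^\ba\,dx - K_{\la,\mu}(u_k+s_1\phi,w_k+s_2\psi)=0,
\]
with $f_k(0,0)=1$. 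By the implicit function theorem the Gâteaux derivative in the direction $(v_1,v_2)\in C_{\h}$ is
\[
\langle f_k'(0,0),(v_1,v_2)\rangle = -\,\frac{\big\langle \partial_{(s_1,s_2)}F((0,0),1),(v_1,v_2)\big\rangle}{\partial_t F((0,0),1)},
\]
and by Lemma \ref{le6}, $\partial_t F((0,0),1) = (1+q)\uk - (\al+\ba-1+q)\io\bk \geq C_2 > 0$, so the denominator is bounded below by a positive constant uniform in $k$ (for $k$ large).

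Next I would compute the numerator. Differentiating the three pieces of $F$ at $((0,0),1)$ in the direction $(v_1,v_2)$ produces: (i) the bilinear form term $2\big(\langle u_k,v_1\phi\rangle_{X_0} + \langle w_k,v_2\psi\rangle_{X_0}\big)$, which is controlled by $2\|(u_k,w_k)\|\,\|(v_1\phi,v_2\psi)\|$ and hence is finite since $\{(u_k,w_k)\}$ is bounded and $\phi,\psi\in C_{\h}$; (ii) the $b$-term, whose derivative is (up to the fixed constants $\al/(\al+\ba)$, $\ba/(\al+\ba)$) bounded by $\|b\|_\infty$ times integrals of the form $\int_\Om (u_k)_+^{\al-1}(w_k)_+^\ba|\phi|\,dx$ and $\int_\Om (u_k)_+^{\al}(w_k)_+^{\ba-1}|\psi|\,dx$, which are finite by Hölder and the Sobolev embedding $X_0\hookrightarrow L^{\al+\ba}(\Om)\hookrightarrow L^{2_s^*}(\Om)$ since $\al+\ba < 2_s^*-1 < 2_s^*$; and (iii) the singular term, whose derivative is
\[
\la(1-q)\io f(x)(u_k)_+^{-q}\phi\,dx + \mu(1-q)\io g(x)(w_k)_+^{-q}\psi\,dx.
\]
This last term is the main obstacle: the integrand has a negative power $(u_k)_+^{-q}$ which may blow up where $u_k$ vanishes, so one cannot simply bound it by sup-norms. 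I would handle it exactly as in the scalar singular framework of \cite{yc,cv}: since $\phi,\psi\geq 0$, the relevant direction is the one making $f_k$ well defined, and crucially the combination that actually appears is bounded because $(u_k,w_k)\in\mc N_{\la,\mu}^+$ already forces $\io f(x)(u_k)_+^{1-q}dx, \io g(x)(w_k)_+^{1-q}dx$ to be finite (indeed bounded, by \eqref{e2} and the boundedness of $\|(u_k,w_k)\|$); then using $0<q<1$ and $\phi,\psi$ bounded with compact support, Hölder's inequality with exponents $\frac{1}{1-q}$ and $\frac1q$ against $f(u_k)_+^{1-q}\in L^1$ gives $\io |f(x)|(u_k)_+^{-q}|\phi|\,dx \leq \big(\io |f|(u_k)_+^{1-q}\big)^{q}\big(\io |f|\,|\phi|^{\frac{1}{1-q}}\big)^{1-q} < \infty$, using $f\in L^{q^*}(\Om)$ and the integrability of $(u_k)_+^{1-q}$. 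Assembling (i)–(iii), the numerator is bounded in absolute value by a constant independent of $k$, and dividing by $\partial_t F((0,0),1)\geq C_2>0$ shows $|\langle f_k'(0,0),(v_1,v_2)\rangle|$ is finite, uniformly in $k$. \QED
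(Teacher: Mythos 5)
Your approach has a genuine gap at exactly the point you identify as ``the main obstacle''. The implicit function theorem quotient formula requires $F$ to be differentiable in $(s_1,s_2)$ at $((0,0),1)$, and the derivative of the singular piece would be $\la(1-q)\io f(x)(u_k)_+^{-q}v_1\phi\,dx+\mu(1-q)\io g(x)(w_k)_+^{-q}v_2\psi\,dx$. At this stage of the argument nothing guarantees this quantity is finite: $(u_k,w_k)$ is only an Ekeland minimizing sequence in $\mc N_{\la,\mu}^{+}$, not a solution, so $u_k$ may vanish on a set of positive measure where $\phi>0$, and since $f>0$ the integral is then $+\infty$. The integrability of $f\,u_{+}^{-q}\phi$ is precisely what the paper establishes \emph{later} (Lemma \ref{le8} and Corollary \ref{cc1}), using the boundedness of $\langle f_k'(0,0),v\rangle$ from this very lemma, so assuming it here is circular. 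Moreover, the H\"older step you invoke to rescue the term is algebraically false: with exponents $1/q$ and $1/(1-q)$, writing the integrand as a product whose first factor raised to $1/q$ equals $|f|(u_k)_+^{1-q}$ forces the second factor to still carry a negative power of $(u_k)_+$; no interpolation against $\io |f|(u_k)_+^{1-q}\,dx<\infty$ can control $\io |f|(u_k)_+^{-q}|\phi|\,dx$, because a negative power cannot be dominated by positive powers near the zero set of $u_k$. Consequently your claimed uniform bound on the ``numerator'' does not follow.

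The paper avoids differentiating the singular term altogether. It works with one-sided difference quotients at scale $\rho>0$ and uses the sign information from \eqref{r1}: since $\phi,\psi\ge 0$, the differences $f(x)\big[(u_k+\rho v_1\phi)_+^{1-q}-(u_k)_+^{1-q}\big]$ and the analogous $g$-term are nonnegative, hence can be \emph{discarded} on the favorable side of the inequality obtained by subtracting \eqref{f1} from \eqref{f2}; dividing by $\rho$ and letting $\rho\to0$ gives \eqref{s6}, which together with the uniform lower bound \eqref{s5} (your denominator estimate, which is correct and is also the paper's Lemma \ref{le6}) excludes $\langle f_k'(0,0),v\rangle=-\infty$. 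The value $+\infty$ is then excluded by a separate contradiction argument that feeds the Ekeland inequality \eqref{c2}, evaluated at $f_k(\rho v)(u_k+\rho v_1\phi,w_k+\rho v_2\psi)$, through the same sign trick to reach \eqref{s8}, which is incompatible with \eqref{s5} for large $k$. If you want to salvage your plan, you must replace the implicit-function-theorem formula and the H\"older estimate by this kind of one-sided difference-quotient argument that never requires $\io f(x)(u_k)_+^{-q}v_1\phi\,dx$ to be finite a priori.
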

\begin{proof}
From \eqref{f1} and \eqref{f2} we have that
{\small\begin{align*}
0=&[f_{k}^2(\rho v_1,\rho v_2) -1]\|(u_k+\rho v_1\phi,w_k+ \rho v_2\psi)\|^2 +\|(u_k+\rho v_1\phi,w_k+\rho v_2\psi)\|^2-\|(u_k,w_k)\|^2\\
&- [f_{k}^{1-q}(\rho v_1,\rho_2) -1]\io (\la f(x)(u_k+\rho v_1\phi)_{+}^{1-q} +\mu g(x)(w_k+\rho v_2\psi)_{+}^{1-q})dx \\
&- \la\io f(x)[((u_k+v_1\phi)_{+}^{1-q}- (u_k)_{+}^{1-q})]dx- \mu\io g(x)[((w_k+v_2\psi)_{+}^{1-q}- (w_k)_{+}^{1-q})]dx\\
&- [f_{k}^{\al+\ba}(\rho v_1,\rho v_2) -1]\io b(x)(u_k+\rho v_1\phi)_{+}^{\al}(w_k+\rho v_2\psi)_{+}^{\ba}dx \\
&- \io b(x)[((u_k+\rho v_1\phi)^{\al}_{+}(w_k+\rho v_2\psi)_{+}^{\ba}- (u_k)_{+}^{\al}(w_k)_{+}^{\ba})]dx,\\
\leq & [f_{k}^2(\rho v_1,\rho v_2) -1]\|(u_k+\rho v_1\phi,w_k+ \rho v_2\psi)\|^2 +\|(u_k+\rho v_1\phi,w_k+\rho v_2\psi)\|^2-\|(u_k,w_k)\|^2\\
&- [f_{k}^{1-q}(\rho v_1,\rho_2) -1]\io (\la f(x)(u_k+\rho v_1\phi)_{+}^{1-q} +\mu g(x)(w_k+\rho v_2\psi)_{+}^{1-q})dx \\
&- [f_{k}^{\al+\ba}(\rho v_1,\rho v_2) -1]\io b(x)(u_k+\rho v_1\phi)_{+}^{\al}(w_k+\rho v_2\psi)_{+}^{\ba}dx \\
&- \io b(x)[((u_k+\rho v_1\phi)_{+}^{\al}(w_k+\rho v_2\psi)_{+}^{\ba}- (u_k)_{+}^{\al}(w_k)_{+}^{\ba})]dx,
\end{align*}}
since
\begin{align}\label{r1}
 (u_k+\rho v_1\phi)_{+}^{1-q}(x)- (u_k)_{+}^{1-q}(x) = &\left\{\begin{array}{lr}
 (u_k+\rho v_1\phi)^{1-q}(x)- (u_k)^{1-q}(x)\;\mbox{if}\; u_k\geq 0\\
 0\;\mbox{if}\; u_k\leq 0, u_k+ \rho v_1\phi\leq 0 \\
 (u_k+\rho v_1\phi)^{1-q}(x)\;\mbox{if}\; u_k\leq 0, u_k+\rho v_1\phi\geq 0,
\end{array}\right.
\end{align}
we have,
\[\io f(x)[((u_k+v_1\phi)_{+}^{1-q}- (u_k)_{+}^{1-q})(x)]dx\geq 0.\]
Similarly, one can see that
\[\io g(x)[((w_k+v_2\psi)_{+}^{1-q}- (w_k)_{+}^{1-q})(x)]dx\geq 0.\]
\noi Now dividing by $\rho>0$ and passing to the limit $\rho\ra 0$, we derive that
{\small \begin{align}\label{s6}
0&\leq\langle f_{k}^{\prime}(0,0),(v_1,v_2) \rangle\left[ 2 \|(u_k,w_k)\|^2 - (1-q) \ak -(\al+\ba)\io b(x)(u_k)_{+}^{\al}(w_k)_{+}^{\ba} dx\right]\notag\\
 &+ 2 \int_{Q}\frac{(u_k(x)-u_k(y))((v_1\phi)(x)-(v_1\phi)(y))+(w_k(x)-w_k(y))((v_2\psi)(x)-(v_2\psi)(y))}{|x-y|^{n+2s}} dxdy\notag\\
 &-\al \io b(x)(u_k)_{+}^{\al-1} (w_{k})_{+}^{\ba} v_1\phi dx -\ba \io b(x) (u_k)_{+}^{\al}(w_{k})_{+}^{\ba-1} v_2\psi dx\notag\\
=& \langle f_{k}^{\prime}(0,0),(v_1,v_2) \rangle\left[ (1+q)\|(u_k,w_k)\|^2 -(\al+\ba-1+q)\io b(x)(u_k)_{+}^{\al}(w_k)_{+}^{\ba}dx\right] \notag\\
 &+ 2 \int_{Q}\frac{(u_k(x)-u_k(y))((v_1\phi)(x)-(v_1\phi)(y))+(w_k(x)-w_k(y))((v_2\psi)(x)-(v_2\psi)(y))}{|x-y|^{n+2s}} dxdy\notag\\
 &-\al \io b(x)(u_k)_{+}^{\al-1} (w_{k})_{+}^{\ba} v_1\phi dx -\ba \io b(x) (u_k)_{+}^{\al}(w_{k})_{+}^{\ba-1} v_2\psi dx.
\end{align}}
From \eqref{s5} and \eqref{s6} we know immediately that $\langle f_{k}^{\prime}(0,0),(v_1,v_2) \rangle\ne -\infty$. Now we show that $\langle f_{k}^{\prime}(0,0),(v_1,v_2) \rangle\ne +\infty$. Arguing by contradiction, we assume that $\langle f_{k}^{\prime}(0,0),(v_1,v_2) \rangle=+\infty$. Since
\begin{align}\label{s7}
|f_k(\rho v_1,\rho_2)-1|&\|(u_k, w_k)\| + \rho f_{k}(\rho v_1,\rho v_2) \|(v_1 \phi, v_2\psi)\|\notag\\
& \geq \|[f_k(\rho v_1,\rho v_2)-1]\|(u_k,w_k\| + f_{k}(\rho v_1,\rho v_2)\|(\rho v_1 \phi, \rho v_2\psi)\|\notag\\
&= \|f_k(\rho v_1,\rho v_2) (u_k+\rho v_1\phi, w_k +\rho v_2\psi)- (u_k,w_k)\|
\end{align}
and
\[f_{k}(\rho v_1,\rho v_2)> f_{k}(0,0) =1\]
for sufficiently large $k$. From the definition of derivative $\langle f_{k}^{\prime}(0,0),(v_1,v_2) \rangle$, applying equation \eqref{c2} with $\ee= f_{k}(\rho v_1,\rho v_2)(u_k+\rho v_1\phi, w_k+\rho v_2\psi)\in \mc N_{\la,\mu}^{+}$, we
clearly have that
{\small\begin{align*}
&[f_k(\rho v_1,\rho v_2)-1]\frac{\|(u_k,w_k)\|}{k} +  f_{k}(\rho v_1,\rho v_2)\frac{\|\rho v \phi\|}{k}\\
& \geq \frac{1}{k}\|f_k(\rho v_1,\rho v_2)(u_k +\rho v_1 \phi, w_k +\rho v_2 \psi)-(u_k,w_k)\|\\
&\geq J_{\la,\mu}(u_k,w_k) - J_{\la,\mu}(f_{k}(\rho v_1,\rho v_2)(u_k +\rho v_1 \phi, w_k+ \rho v_2\psi))\\
&= \left(\frac{1}{2}-\frac{1}{1-q}\right) \|(u_k,w_k)\|^2+ \left(\frac{1}{1-q}-\frac{1}{2}\right) f_{k}^2(\rho v_1,\rho v_2)\|(u_k+\rho v_1\phi, w_k+\rho v_2\psi)\|^2 \\
& \quad+ \left(\frac{1}{1-q}-\frac{1}{\al+\ba}\right)\left(\io b(x) (u_k)_{+}^{\al}(w_{k})_{+}^{\ba} -  f_{k}^{\al+\ba}(\rho v_1,\rho v_2)\io b(x)(u_k+\rho v_1\phi)_{+}^{\al}(w_k+\rho v\phi)_{+}^{\ba} dx\right)\\
&= \left(\frac{1+q}{1-q}\right)(\|(u_k+\rho v_1\phi, w_k+\rho v_2\psi)\|^2-\|(u_k,w_k)\|^2 + [f_{k}^2(\rho v_1,\rho v_2)-1]\|(u_k+\rho v_1\phi, w_k+\rho v_2\psi)\|^2)\\
&\quad - \left(\frac{1}{1-q}-\frac{1}{\al+\ba}\right) f_{k}^{\al+\ba}(\rho v_1,\rho v_2) \io b(x) [((u_k+\rho v_1\phi)_{+}^{\al}(w_k +\rho v_2\psi)_{+}^{\ba} - (u_k)_{+}^{\al}(w_{k})_{+}^{\ba})] dx\\
&\quad - \left(\frac{1}{1-q}-\frac{1}{\al+\ba}\right) [f_{k}^{\al+\ba}(\rho v_1,\rho v_2)-1] \io b(x)(u_k)_{+}^{\al}(w_{k})_{+}^{\ba} dx.
\end{align*}}
Dividing by $\rho>0$ and passing to the limit as $\rho\ra 0$, we can obtain that
{\small\begin{align*}
&\langle f_{k}^{\prime}(0,0),(v_1,v_2) \rangle\frac{\|(u_k,w_k)\|}{k} + \frac{\|(v_1\phi, v_2\psi)\|}{k}\\
&\geq \left(\frac{1+q}{1-q}\right)\langle f_{k}^{\prime}(0,0),(v_1,v_2) \rangle\|(u_k,w_k)\|^2- \left(\frac{\al+\ba-1+q}{1-q}\right) \langle f_{k}^{\prime}(0,0),(v_1,v_2) \rangle \io b(x) (u_k)_{+}^{\al} (w_{k})_{+}^{\ba}\\
&+ \left(\frac{1+q}{1-q}\right)\int_{Q}\frac{(u_k(x)-u_k(y))((v_1\phi)(x)-(v_1\phi)(y))+(w_k(x)-w_k(y))((v_2\psi)(x)-(v_2\psi)(y))}{|x-y|^{n+2s}} dxdy\\
& - \left(\frac{\al+\ba-1+q}{1-q}\right)\left(\frac{\al}{\al+\ba}\io b(x)(u_{k})_{+}^{\al-1} (w_{k})_{+}^{\ba} v_1\phi  dx + \frac{\ba}{\al+\ba}\io b(x)(u_{k})_{+}^{\al} (w_{k})_{+}^{\ba-1} v_2\psi dx \right) \\
=& \frac{\langle f_{k}^{\prime}(0,0),(v_1,v_2) \rangle}{1-q}\left[ (1+q)\|(u_k,w_k)\|^2 - (\al+\ba-1+q) \io \bk \right]\\
& +\left(\frac{1+q}{1-q}\right)\int_{Q}\frac{(u_k(x)-u_k(y))((v_1\phi)(x)-(v_1\phi)(y))+(w_k(x)-w_k(y))((v_2\psi)(x)-(v_2\psi)(y))}{|x-y|^{n+2s}} dxdy\\
&- \left(\frac{\al+\ba-1+q}{1-q}\right) \left[\frac{\al}{\al+\ba} \io b(x) (u_k)_{+}^{\al-1} (w_{k})_{+}^{\ba}v_1\phi dx+ \frac{\ba}{\al+\ba}\io b(x) (u_k)_{+}^{\al} (w_{k})_{+}^{\ba-1}v_2\psi dx\right]
\end{align*}}
that is,
{\small\begin{align}\label{s8}
&\frac{\|(v_1\phi, v_2\psi)\|}{k}\notag\\
&\geq  \frac{\langle f_{k}^{\prime}(0,0),(v_1,v_2) \rangle}{1-q}\left[ (1+q)\|(u_k,w_k)\|^2 - (\al+\ba-1+q) \io \bk - \frac{(1-q)\|(u_k,w_k)\|}{k} \right]\notag\\
& +\left(\frac{1+q}{1-q}\right)\int_{Q}\frac{(u_k(x)-u_k(y))((v_1\phi)(x)-(v_1\phi)(y))+(w_k(x)-w_k(y))((v_2\psi)(x)-(v_2\psi)(y))}{|x-y|^{n+2s}} dxdy\notag\\
&- \left(\frac{\al+\ba-1+q}{1-q}\right)\left[ \frac{\al}{\al+\ba}\io b(x) (u_k)_{+}^{\al-1} (w_{k})_{+}^{\ba}v_1\phi dx+ \frac{\ba}{\al+\ba}\io b(x) (u_k)_{+}^{\al} (w_{k})_{+}^{\ba-1}v_2\psi\right]
\end{align}}
which is impossible because $\langle f_{k}^{\prime}(0,0),(v_1,v_2) \rangle=+\infty$ and
\[(1+q)\|(u_k,w_k)\|^2 -(\al+\ba-1+q)\io \bk -\frac{(1-q)\|(u_k,w_k)\|}{k} \geq C_2 - \frac{(1-q)C_1}{k}>0.\]
 In conclusion, $|\langle f_{k}^{\prime}(0,0),(v_1,v_2) \rangle|<+\infty$. Furthermore \eqref{s5} with $\|(u_k,w_k)\|\leq C_1$ and two inequalities \eqref{s6} and \eqref{s8} also imply that
\[|\langle f_{k}^{\prime}(0,0),(v_1,v_2) \rangle|\leq C_3\]
for $k$ sufficiently large and a suitable constant $C_3$.\QED
\end{proof}
\begin{Lemma}\label{le8}
For each $0\leq (\phi,\psi)\in C_{\h}$ and for every $0\leq v=(v_1,v_2)\in\h$ with $\|v\|\leq 1$, we have $\la f(x)u_{+}^{-q} v_1\phi+\mu g(x) w_{+}^{-q} v_2\psi \in L^{1}(\Om)$ and
{\small \begin{align}\label{s9}
\int_{Q}&\frac{(u(x)-u(y))((v_1\phi)(x)-(v_1\phi)(y))}{|x-y|^{n+2s}} dxdy+ \int_{Q}\frac{(w(x)-w(y))((v_2\phi)(x)-(v_2\phi)(y))}{|x-y|^{n+2s}} dxdy\notag\\
&- \io (\la f(x)u_{+}^{-q} v_1\phi +\mu g(x) w_{+}^{-q} v_2\psi) dx - \io b(x) u_{+}^{\al-1}v_{+}^{\ba}v_1\phi dx -\io b(x)u_{+}^{\al} w_{+}^{\ba-1}v_2\psi dx\geq 0.
\end{align}}
\end{Lemma}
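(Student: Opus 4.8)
The plan is to test the Ekeland inequality \eqref{c2} against the admissible curve produced by Lemma \ref{le4}, divide by the perturbation parameter, and pass to two successive limits ($\rho\to0^{+}$, then $k\to\infty$). Fix $0\le(\phi,\psi)\in C_{\h}$ and $0\le v=(v_1,v_2)\in\h$ with $\|v\|\le1$. For $k$ large enough that $\frac{(1-q)C_1}{k}<C_2$, Lemma \ref{le4} applied at $(u_k,w_k)\in\n^{+}$ produces, for $0<\rho<\e_k$, a number $f_k(\rho v_1,\rho v_2)>0$ with $f_k(0,0)=1$ such that $\zeta_\rho:=f_k(\rho v)\,(u_k+\rho v_1\phi,\ w_k+\rho v_2\psi)\in\n^{+}\subset\n^{+}\cup\n^{0}$. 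Putting $\ee=\zeta_\rho$ in \eqref{c2} gives
\[J_{\la,\mu}(u_k,w_k)-J_{\la,\mu}(\zeta_\rho)\le \tfrac1k\|\zeta_\rho-(u_k,w_k)\|.\]

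For the $\rho$-limit I expand $J_{\la,\mu}(u_k,w_k)-J_{\la,\mu}(\zeta_\rho)$, using $f_k(\rho v)=1+\rho\langle f_{k}^{\prime}(0,0),v\rangle+o(\rho)$ (legitimate since $|\langle f_{k}^{\prime}(0,0),v\rangle|\le C_3$ by Lemma \ref{le7}) and $\|(u_k+\rho v_1\phi,w_k+\rho v_2\psi)\|^2=\uk+2\rho B_k+\rho^{2}\|(v_1\phi,v_2\psi)\|^2$, where $B_k:=\langle(u_k,w_k),(v_1\phi,v_2\psi)\rangle_{\h}$ is the bilinear (Gagliardo) expression with $(u_k,w_k)$ in place of $(u,w)$. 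Every resulting difference quotient has a limit as $\rho\to0^{+}$ except the singular one coming from $\ak$; by the case analysis \eqref{r1} (together with the sign conditions on $f$, $g$, $\la$, $\mu$) its integrand is nonnegative, so Fatou's lemma bounds its $\liminf$ below by $(1-q)\big(\la\io f(u_k)_{+}^{-q}v_1\phi\,dx+\mu\io g(w_k)_{+}^{-q}v_2\psi\,dx\big)$, which — all other terms being finite — already forces $f(u_k)_{+}^{-q}v_1\phi,\ g(w_k)_{+}^{-q}v_2\psi\in L^{1}(\Om)$. The decisive point is that, after inserting the Nehari identity $\ak+\io\bk-\uk=0$, the coefficient of the sign–uncontrolled quantity $\langle f_{k}^{\prime}(0,0),v\rangle$ in the expansion equals $\ak+\io\bk-\uk=0$, so it drops out. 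Using $\|\zeta_\rho-(u_k,w_k)\|\le|f_k(\rho v)-1|\,\|(u_k,w_k)\|+f_k(\rho v)\rho\,\|(v_1\phi,v_2\psi)\|$ and sending $\rho\to0^{+}$, I obtain a $k$-level inequality
\[B_k-\big[\text{the two }b\text{-integrals of \eqref{s9} at }(u_k,w_k)\big]-\la\io f(u_k)_{+}^{-q}v_1\phi\,dx-\mu\io g(w_k)_{+}^{-q}v_2\psi\,dx\ \ge\ -\tfrac1k\Big(|\langle f_{k}^{\prime}(0,0),v\rangle|\,\|(u_k,w_k)\|+\|(v_1\phi,v_2\psi)\|\Big).\]

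For the $k$-limit, the right-hand side tends to $0$ since $\|(u_k,w_k)\|\le C_1$ and $|\langle f_{k}^{\prime}(0,0),v\rangle|\le C_3$. The term $B_k$ converges to the bilinear expression for $\ee$, because $(u_k,w_k)\rightharpoonup\ee$ weakly in the Hilbert space $\h$ and $(v_1\phi,v_2\psi)\in\h$; the two $b$-integrals converge by the strong convergences $u_k\to u$, $w_k\to w$ in $L^{r}$ for $r<2^{*}_{s}$ (available since $2<\al+\ba<2^{*}_{s}-1$), $b\in L^{\infty}$, and dominated convergence; and since $f(u_k)_{+}^{-q}v_1\phi\ge0$ with $u_k\to u$, $w_k\to w$ a.e. in $\Om$, a second application of Fatou's lemma gives both $f u_{+}^{-q}v_1\phi,\ g w_{+}^{-q}v_2\psi\in L^{1}(\Om)$ and $\liminf_k\big(\la\io f(u_k)_{+}^{-q}v_1\phi+\mu\io g(w_k)_{+}^{-q}v_2\psi\big)\ge\la\io f u_{+}^{-q}v_1\phi+\mu\io g w_{+}^{-q}v_2\psi$. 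Taking $\limsup_{k\to\infty}$ in the displayed $k$-level inequality and feeding in these three limits yields exactly \eqref{s9}.

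The main obstacle is the singular term. One must (i) carry out both Fatou passages carefully — identifying the pointwise $\liminf$ of $\rho^{-1}\big[(u_k+\rho v_1\phi)_{+}^{1-q}-(u_k)_{+}^{1-q}\big]$ through the three alternatives of \eqref{r1}, and controlling the behaviour on the set $\{u=0\}$, which is precisely the mechanism converting the a priori bound into the $L^{1}$-membership claimed in the lemma — and (ii) ensure that the implicit-function coefficient $\langle f_{k}^{\prime}(0,0),v\rangle$, whose sign is not known, never survives into the final estimate; this is what the Nehari identity accomplishes, its coefficient vanishing identically. By contrast the $\rho$-asymptotics of $f_k$, the weak lower semicontinuity, and the compactness of the $b$-integrals are routine given Lemmas \ref{le4}, \ref{le6}, \ref{le7} and assumptions $(a1)$–$(b1)$.
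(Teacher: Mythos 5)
Your proposal is correct and follows essentially the same route as the paper: plug the perturbed element $f_k(\rho v)(u_k+\rho v_1\phi,w_k+\rho v_2\psi)\in\mc N_{\la,\mu}^{+}$ from Lemma \ref{le4} into the Ekeland inequality \eqref{c2}, divide by $\rho$, use the Nehari identity to annihilate the sign-uncontrolled $\langle f_k'(0,0),v\rangle$-term together with the bound of Lemma \ref{le7} on the error term, and apply Fatou via \eqref{r1} first in $\rho$ and then in $k$ to obtain the $L^1$-membership and \eqref{s9}. The only cosmetic difference is that you phrase the $\rho$-asymptotics as a first-order expansion of $f_k$ while the paper works directly with difference quotients, and you make explicit the final $k\to\infty$ convergences (weak convergence for the bilinear term, strong $L^r$ convergence for the $b$-integrals) that the paper leaves implicit.
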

\begin{proof}
Applying \eqref{s7} and $\eqref{c2}$ again, we have that
{\small\begin{align*}
&[f_k(\rho v_1,\rho v_2)-1]\frac{\|(u_k,w_k)\|}{k} +  f_{k}(\rho v_1,\rho v_2)\frac{\| \rho v\phi\|}{k}\\
& \geq \frac{1}{k}\|f_k(\rho v_1,\rho v_2)(w_k + \rho v \phi)-w_k\|\\
&\geq J_{\la,\mu}(u_k,w_k) - J_{\la,\mu}(f_k(\rho v_1,\rho v_2)(w_k+ \rho v\phi))\\
&= \frac{1}{2}\|(u_k,w_k)\|^2 -\frac{1}{2} \|f_{k}(\rho v_1,\rho v_2)(w_k+\rho v\phi)\|^2 dx -\frac{1}{1-q}\io (\la f(x) (u_k)_{+}^{1-q}+g(x) (w_k)_{+}^{1-q})dx\\
&\quad +\frac{1}{1-q}\io (\la f(x)(f_{k}(\rho v_1,\rho v_2)(u_k+\rho v_1\phi))_{+}^{1-q}+\mu g(x)(f_{k}(\rho v_1,\rho v_2)(w_k+\rho v_2\psi))_{+}^{1-q} )dx \\
&\quad-  \frac{1}{\al+\ba}\io  b(x)(u_k)_{+}^{\al}(w_k)_{+}^{\ba}dx + \frac{1}{\al+\ba} \io b(x)(f_{k}(\rho v_1,\rho v_2)(w_k+\rho v_1\phi))_{+}^{\al}(w_k+\rho v_2\psi))_{+}^{\ba}\\
&=- \frac{f^{2}_{k}(\rho v_1,\rho v_2)-1}{2} \|(u_k,w_k)\|^2 - \frac{f_{k}^2(\rho v_1,\rho v_2)}{2}(\|(u_k+\rho v_1\phi, w_k+\rho v_2\psi)\|^2- \|(u_k,w_k)\|^2)\\
&\quad +\frac{f_{k}^{1-q}(\rho v_1,\rho v_2)-1}{1-q} \io (\la f(x) (u_k+\rho v_1\phi)_{+}^{1-q}+\mu g(x) (w_k+\rho v_2\psi)_{+}^{1-q})dx\\
 &\quad+\frac{1}{1-q} \io a(x) [((w_k+\rho v\phi)_{+}^{1-q} - (w_{k})_{+}^{1-q})(x)] dx\\
&\quad + \frac{f_{k}^{\al+\ba}(\rho v_1,\rho v_2)-1}{\al+\ba} \io b(x) (u_k+\rho v_1\phi))_{+}^{\al}(w_k+\rho v_2\psi)_{+}^{\ba} dx\\
 &\quad+\frac{1}{\al+\ba} \io b(x) [((u_k+\rho v_1\phi)_{+}^{\al}(w_k+\rho v_2\psi)_{+}^{\ba} - (u_k)_{+}^{\al}(w_{k})_{+}^{\ba})(x)]dx.
\end{align*}}
\noi Dividing by $\rho>0$ and passing to the limit $\rho\ra 0^+$, we obtain
{\small \begin{align*}
&|\langle f^{\prime}_k(0,0),(v_1,v_2) \rangle|\frac{\|(u_k,w_k)\|}{k} + \frac{\|(v_1\phi,v_2\psi)\|}{k}\\
\geq 
&\;- \langle f^{\prime}_{k}(0,0), (v_1,v_2)\rangle\left[\|(u_k,w_k)\|^2 -\ak -\io\bk dx\right]\\
& -\int_{Q}\frac{(u_k(x)-u_k(y))(\phi(x)-\phi(y))+(w_k(x)-w_k(y))(\psi(x)-\psi(y))}{|x-y|^{n+2s}}dxdy \\
&\;+\frac{\al}{\al+\ba}\io b(x) (u_k)_{+}^{\al-1}(w_k)_{+}^{\ba} v_1\phi dx + \frac{\ba}{\al+\ba}+ \io b(x) (u_k)_{+}^{\al}(w_k)_{+}^{\ba-1} v_2\psi dx\\
&+ \frac{1}{1-q}\liminf_{\rho\ra 0^{+}}\left[ \io\frac{ \la f(x)((u_k+\rho v_1\phi)_{+}^{1-q} - (w_{k})_{+}^{1-q})}{\rho} + \io\frac{ \mu g(x)((w_k+\rho v_2\psi)_{+}^{1-q} - (w_{k})_{+}^{1-q})}{\rho}\right]\\
=& -\int_{Q}\frac{(u_k(x)-u_k(y))(\phi(x)-\phi(y))+(w_k(x)-w_k(y))(\psi(x)-\psi(y))}{|x-y|^{n+2s}}dxdy\\
&\;+\frac{\al}{\al+\ba}\io b(x) (u_k)_{+}^{\al-1}(w_k)_{+}^{\ba} v_1\phi dx + \frac{\ba}{\al+\ba}+ \io b(x) (u_k)_{+}^{\al}(w_k)_{+}^{\ba-1} v_2\psi dx\\
&+ \frac{1}{1-q}\liminf_{\rho\ra 0^{+}} \left[\io\frac{ \la f(x)((u_k+\rho v_1\phi)_{+}^{1-q} - (w_{k})_{+}^{1-q})}{\rho} + \io\frac{ \mu g(x)((w_k+\rho v_2\psi)_{+}^{1-q} - (w_{k})_{+}^{1-q})}{\rho}\right].
\end{align*}}
Then by above inequality, one can see that
{\small\[\liminf_{\rho\ra 0^{+}}\left[ \io\frac{ \la f(x)((u_k+\rho v_1\phi)_{+}^{1-q} - (w_{k})_{+}^{1-q})}{\rho}dx + \io\frac{ \mu g(x)((w_k+\rho v_2\psi)_{+}^{1-q} - (w_{k})_{+}^{1-q})}{\rho}dx\right]\]}
is finite. Now, using \eqref{r1}, we have
\[f(x)((w_k+ \rho v_1\phi)_{+}^{1-q} - (w_{k})_{+}^{1-q})\geq0\]
and similarly we have,
\[g(x)((w_k+\rho v_2\psi)_{+}^{1-q} - (w_{k})_{+}^{1-q})\geq 0,\;\mbox{ for all}\;  x\in \Om,\mbox{ for all} \;t>0.\]
Then by the Fatou Lemma, we have that
{\small\begin{align*}
&\io (\la f(x)(u_{k})_{+}^{-q}v_1\phi+\mu g(x)(w_{k})_{+}^{-q}v_2\psi)  dx\\
&\leq \frac{1}{1-q} \liminf_{\rho\ra 0^{+}} \left[\la\io\frac{ f(x)((u_k+\rho v_1\phi)_{+}^{1-q} - (u_{k})_{+}^{1-q})}{\rho}+ \mu \io\frac{ g(x) ((w_k+\rho v_2\psi)_{+}^{1-q} - (w_{k})_{+}^{1-q})}{\rho}dx\right]\\
&\leq \frac{|\langle f^{\prime}_k(0,0),(v_1,v_2) \rangle|\|(u_k,w_k)\|+ \|(v_1\phi, v_2\psi)\|}{k}\\
&\quad+\int_{Q}\frac{(u_k(x)-u_k(y))(\phi(x)-\phi(y))+(w_k(x)-w_k(y))(\psi(x)-\psi(y))}{|x-y|^{n+2s}}dxdy\\
&\quad- \frac{\al}{\al+\ba} \io b(x)(u_k)_{+}^{\al-1} (w_{k})_{+}^{\ba} v_1\phi dx
-\frac{\ba}{\al+\ba} \io b(x)(u_k)_{+}^{\al} (w_{k})_{+}^{\ba-1} v_2\psi dx\\
&\leq \frac{C_1C_3 \|(v_1,v_2)\|+ \|(v_1\phi, v_2\psi)\|}{k}- \frac{\al}{\al+\ba} \io b(u_k)_{+}^{\al-1} (w_{k})_{+}^{\ba} v_1\phi
-\frac{\ba}{\al+\ba} \io b(x)(u_k)_{+}^{\al} (w_{k})_{+}^{\ba-1} v_2\psi\\
&\quad+\int_{Q}\frac{(u_k(x)-u_k(y))(\phi(x)-\phi(y))+(w_k(x)-w_k(y))(\psi(x)-\psi(y))}{|x-y|^{n+2s}}dxdy
\end{align*}}
\noi Again using the Fatou Lemma and the above relation we have
{\small\begin{align*}
 &\la \io f(x) u_{+}^{-q} v_1\phi dx - \mu \io g(x) w_{+}^{-q}v_2\psi dx \leq \io \left[\liminf_{k\ra\infty}(\la f(x) u_{+}^{-q}v_1\phi + \mu \io g(x) w_{+}^{-q}v_2\psi) dx \right] dx\\
 &\leq  \liminf_{k\ra\infty}\io (\la f(x)(u_{k})_{+}^{-q}v_1\phi+\mu(w_{k})_{+}^{-q}v_2\psi) dx \\
&\leq \frac{C_1C_3 \|(v_1,v_2)\|+ \|(v_1\phi, v_2\psi)\|}{k}- \frac{\al}{\al+\ba} \io b(x)(u_k)_{+}^{\al-1} (w_{k})_{+}^{\ba} v_1\phi
-\frac{\ba}{\al+\ba} \io b(u_k)_{+}^{\al} (w_{k})_{+}^{\ba-1} v_2\psi\\
&\quad+\int_{Q}\frac{(u_k(x)-u_k(y))(v_1\phi(x)-v_1\phi(y))+(w_k(x)-w_k(y))(v_2\psi(x)-v_2\psi(y))}{|x-y|^{n+2s}}dxdy
\end{align*}}
which completes the proof of Lemma.\QED
\end{proof}
\begin{Corollary}\label{cc1}
For every $0\leq (\phi,\psi)\in \h$, we have $(\la f(x) u_{+}^{-q}\phi+\mu g(x) w_{+}^{-q}\psi) \in L^{1}(\Om)$, $u_{+}$, $w_{+}>0$ in ${\Om}$ and
{\small \begin{align}\label{s91}
\int_{Q}&\frac{(u(x)-u(y))(\phi(x)-\phi(y))}{|x-y|^{n+2s}} dxdy+\int_{Q}\frac{(w(x)-w(y))(\psi(x)-\psi(y))}{|x-y|^{n+2s}} dxdy-\la \io f(x) u_{+}^{-q}\phi dx \notag\\
&- \mu \io g(x) w_{+}^{-q}\psi dx-\frac{\al}{\al+\ba} \io b(x) u_{+}^{\al-1}w_{+}^{\ba}\phi dx-\frac{\ba}{\al+\ba} \io b(x) u_{+}^{\al}w_{+}^{\ba-1}\psi dx\geq 0.
\end{align}}
\end{Corollary}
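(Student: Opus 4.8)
The plan is to deduce Corollary \ref{cc1} from Lemma \ref{le8} in three moves: first eliminate the auxiliary pair $(v_1,v_2)$, then pass from smooth test functions in $C_{\h}$ to arbitrary nonnegative test functions in $\h$ by density, and finally read off strict positivity of $u$ and $w$ from the $L^1$-integrability of the singular term.

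For the first move, note that for fixed $(\phi,\psi)\in C_{\h}$ the left-hand side of \eqref{s9} is a linear functional of $(v_1,v_2)$ (each of the six terms is linear in exactly one of $v_1,v_2$, through the products $v_1\phi$, $v_2\psi$). Hence \eqref{s9}, a priori only for $\|v\|\le1$, holds for \emph{every} $0\le(v_1,v_2)\in\h$ by positive homogeneity (rescale $v$ into the unit ball, apply, rescale back). Now fix $0\le(\phi,\psi)\in C_{\h}$, pick $\eta\in C_c^\infty(\Om)$ with $0\le\eta$ and $\eta\equiv1$ on $\mathrm{supp}\,\phi\cup\mathrm{supp}\,\psi$, and take $(v_1,v_2)=(\eta,0)$ and then $(0,\eta)$: since $\eta\phi=\phi$ and $\eta\psi=\psi$, adding the two resulting inequalities yields \eqref{s91} for all $0\le(\phi,\psi)\in C_{\h}$, together with $f(x)u_+^{-q}\phi,\ g(x)w_+^{-q}\psi\in L^1(\Om)$ (part of Lemma \ref{le8}).

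For the second move, let $0\le(\phi,\psi)\in\h$ be arbitrary and choose $(\phi_k,\psi_k)\in C_{\h}$ with $0\le(\phi_k,\psi_k)\to(\phi,\psi)$ in $\h$ and, along a subsequence, pointwise a.e.\ in $\Om$ (standard density of $C_c^\infty(\Om)$ in $X_0$, combined with truncation and mollification, which preserve sign). The two Gagliardo bilinear terms converge by the Cauchy--Schwarz inequality since $u,w\in X_0$. The two $b$-terms converge because $b\in L^\infty(\Om)$, while $u_+^{\al-1}w_+^{\ba}$ and $u_+^{\al}w_+^{\ba-1}$ lie in $L^{\frac{\al+\ba}{\al+\ba-1}}(\Om)$ by H\"older (using $u,w\in L^{\al+\ba}(\Om)$, since $X_0\hookrightarrow L^{\al+\ba}(\Om)$ because $2<\al+\ba<2^{*}_{s}$), and $\phi_k\to\phi$, $\psi_k\to\psi$ in $L^{\al+\ba}(\Om)$ by the same embedding. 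Finally, rearranging \eqref{s91} for $(\phi_k,\psi_k)$ bounds $\la\io f(x)u_+^{-q}\phi_k\,dx+\mu\io g(x)w_+^{-q}\psi_k\,dx$ above by a convergent (hence bounded) quantity; since $f,g>0$ the integrands are nonnegative (after absorbing the signs of $\la,\mu$ exactly as in the proof of Lemma \ref{le8}), so Fatou's lemma gives $\la\io f(x)u_+^{-q}\phi\,dx+\mu\io g(x)w_+^{-q}\psi\,dx\le\liminf_k(\cdots)<\infty$. This yields both $(\la f(x)u_+^{-q}\phi+\mu g(x)w_+^{-q}\psi)\in L^1(\Om)$ and, upon passing to the limit, the inequality \eqref{s91} for $(\phi,\psi)$.

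For the third move, apply the $L^1$-statement with $\psi\equiv0$ and $\phi=e_1$, where $e_1\in X_0$ is any fixed function with $e_1>0$ a.e.\ in $\Om$ (e.g.\ the first eigenfunction of $(-\De)^s$ in $\Om$): then $f(x)u_+^{-q}e_1\in L^1(\Om)$, and since $f>0$, $e_1>0$ a.e.\ in $\Om$ while $u_+^{-q}=+\infty$ on $\{u\le0\}$, necessarily $|\{x\in\Om:u(x)\le0\}|=0$, i.e.\ $u>0$ a.e.\ in $\Om$; symmetrically (via the $g$-term) $w>0$ a.e.\ in $\Om$. The only delicate point in the whole argument is the passage to the limit in the singular term: $f(x)u_+^{-q}\phi_k$ admits no fixed integrable majorant and $u_+^{-q}$ genuinely blows up where $u$ vanishes, which is precisely why the conclusion is an inequality together with an $L^1$-claim rather than an identity --- one only has the one-sided bound coming from the remaining well-behaved terms, and nonnegativity of the integrand makes Fatou's lemma available, the same mechanism already underlying Lemma \ref{le8}.
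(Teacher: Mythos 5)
Your proof is correct and follows essentially the same route as the paper: specialize the auxiliary pair $(v_1,v_2)$ in Lemma \ref{le8} to a cutoff that is constant on the supports of $\phi,\psi$ (the paper takes a single $v\equiv l$ near the supports, you take $(\eta,0)$ and $(0,\eta)$ together with positive homogeneity), and then pass from $C_{\h}$ to all of $\h$ by density. Your write-up is in fact more explicit than the paper's one-line ``density argument'': it spells out the Fatou/one-sided-bound mechanism for the singular term (which also yields the $L^1$ claim for general nonnegative $(\phi,\psi)\in\h$) and derives $u,w>0$ a.e.\ by testing with a positive function, exactly the mechanism the paper invokes implicitly.
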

\begin{proof}
Choosing $v=(v_1,v_2)\in\h$ such that $v\geq 0$, $v\equiv l$ in the neighborhood of support of $\phi$ and $\|v\|\leq 1$, for some $l>0$ is a constant. Then we note that $ \la\io f(x) u_{+}^{-q}\phi dx + \mu \io g(x) w_{+}^{-q}\psi dx<\infty$, for every $0\leq (\phi,\psi)\in C_{Y}$ which guarantees that $u_{+}$, $w_{+}>0$  a.e in $\Om$. Putting this choice of $v$ in \eqref{s9}, we have for every $0\leq (\phi,\psi)\in C_{Y}$
{\small
\begin{align*}
\int_{Q}&\frac{(u(x)-u(y))(\phi(x)-\phi(y))}{|x-y|^{n+2s}} dxdy+\int_{Q}\frac{(w(x)-w(y))(\psi(x)-\psi(y))}{|x-y|^{n+2s}} dxdy-\la \io f(x) u_{+}^{-q}\phi dx \notag\\
&- \mu \io g(x) w_{+}^{-q}\psi dx-\frac{\al}{\al+\ba} \io b u_{+}^{\al-1}w_{+}^{\ba}\phi dx-\frac{\ba}{\al+\ba} \io b u_{+}^{\al}w_{+}^{\ba-1}\psi dx\geq 0.
\end{align*}
}
Hence by density argument, \eqref{s91} holds for every $0\leq (\phi,\psi)\in \h$, which completes the proof of Corollary.
\end{proof}

\begin{Lemma}\label{le9}
We show that $u>0$, $w>0$ and $\ee\in \mc N_{\la,\mu}^{+}$.
\end{Lemma}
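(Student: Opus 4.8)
The plan is to read off the positivity of $u$ and $w$ from Corollary \ref{cc1}, then upgrade the weak convergence $(u_k,w_k)\rightharpoonup\ee$ to strong convergence in $\h$, and finally pass to the limit in the second-order estimate \eqref{s5} of Lemma \ref{le6} to place $\ee$ in $\mc N_{\la,\mu}^{+}$. For the positivity, Corollary \ref{cc1} already yields $u_{+},w_{+}>0$ a.e.\ in $\Om$: since $\la\io f(x)u_{+}^{-q}\phi\,dx+\mu\io g(x)w_{+}^{-q}\psi\,dx<\infty$ for every admissible $0\le(\phi,\psi)$ and $f,g>0$, neither $u_{+}$ nor $w_{+}$ can vanish on a set of positive measure; since $u_{+}(x)>0$ is the same as $u(x)>0$, this gives $u>0$, $w>0$ a.e.\ in $\Om$, while $u=w=0$ a.e.\ in $\mb R^n\setminus\Om$ by the definition of $\h$. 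In particular $u=u_{+}$ and $w=w_{+}$.

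For the strong convergence, recall $(u_k,w_k)\in\mc N_{\la,\mu}^{+}\subset\mc N_{\la,\mu}$, hence $\uk=\ak+\io\bk$. Using the compact embedding $\h\hookrightarrow(L^{r}(\Om))^2$ for $1\le r<2^{*}_{s}$, together with $f,g\in L^{q^*}(\Om)$ (note $\frac{1}{q^*}+\frac{1-q}{\al+\ba}=1$), $b\in L^{\infty}(\Om)$ and $2<\al+\ba<2^{*}_{s}$, one checks by H\"{o}lder's inequality that $\ak\to\aw$ and $\io\bk\to\io\bw$; consequently $\uk\to\aw+\io\bw$. On the other hand, $(u,w)\ge0$ is an admissible test pair in \eqref{s91}, and plugging $(\phi,\psi)=(u,w)$ (using $u=u_{+}$, $w=w_{+}$) yields $\uw\ge\aw+\io\bw$. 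Combined with weak lower semicontinuity of the norm, $\uw\le\liminf_k\uk=\aw+\io\bw\le\uw$, so every inequality is an equality. Thus $\uw=\lim_k\uk$, which in the Hilbert space $\h$ forces $(u_k,w_k)\to\ee$ strongly in $\h$; at the same time $\uw=\aw+\io\bw$, i.e.\ $\ee\in\mc N_{\la,\mu}$.

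Finally I would pass to the limit in \eqref{s5}, namely $(1+q)\uk-(\al+\ba-1+q)\io\bk\ge C_2>0$: the strong convergence gives $\uk\to\uw$ and $\io\bk\to\io\bw$, so $\phi_{u,w}^{\prime\prime}(1)=(1+q)\uw-(\al+\ba-1+q)\io\bw\ge C_2>0$ (which also re-confirms $\ee\not\equiv(0,0)$). Since $\ee\in\mc N_{\la,\mu}$ with $\phi_{u,w}^{\prime\prime}(1)>0$, the very definition of $\mc N_{\la,\mu}^{\pm}$ forces $\ee\in\mc N_{\la,\mu}^{+}$.

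The delicate point is the strong convergence step. Weak lower semicontinuity of the norm gives only $\uw\le\liminf_k\uk$, which is the wrong direction for deducing the lower bound $\phi_{u,w}^{\prime\prime}(1)\ge C_2$. It is therefore essential to couple the Nehari identity along the sequence (which, after the compactness of the lower-order terms, pins down $\lim_k\uk=\aw+\io\bw$) with the variational inequality \eqref{s91} tested against $\ee$ itself (which provides the reverse bound $\uw\ge\aw+\io\bw$); only the resulting equality $\uw=\lim_k\uk$ makes the Kadec--Klee property of $\h$ applicable and lets the limit pass into \eqref{s5}.
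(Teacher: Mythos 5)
Your proof is correct and follows essentially the same route as the paper: you test the variational inequality \eqref{s91} of Corollary \ref{cc1} with $(\phi,\psi)=(u,w)$, combine this with the Nehari identity along the sequence, the compactness of the lower-order terms and weak lower semicontinuity to get $\uw=\lim_k\uk$, hence strong convergence and $\ee\in\mc N_{\la,\mu}$, and then conclude $\ee\in\mc N_{\la,\mu}^{+}$ from the strict estimate of Lemma \ref{le6}. The only cosmetic differences are that you read the positivity of $u,w$ directly from Corollary \ref{cc1} (the paper additionally tests with $(u^-,w^-)$, which is redundant given $u_+>0$ a.e.\ iff $u>0$ a.e.), and you pass to the limit in \eqref{s5} instead of invoking \eqref{s2} together with the Nehari identity for $(u,w)$ — the two are equivalent.
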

\begin{proof}
Using \eqref{s91} with $\phi=u^-$, $\psi=w^-$, we obtain that
\begin{align*}
0&\leq \int_{Q}\frac{(u(x)-u(y))(u^-(x)- u^-(y))}{|x-y|^{n+2s}} dxdy+ \int_{Q}\frac{(w(x)-w(y))(w^-(x)- w^-(y))}{|x-y|^{n+2s}} dxdy \\
&\leq - \|u^-\|^2 - \|w^-\|^2 - 2\int_{Q}\frac{u^-(x) u^+(y)+w^-(x) w^+(y)}{|x-y|^{n+2s}}dxdy \leq - \|u^-\|^2 - \|w^-\|^2 \leq 0.
\end{align*}
i.e, $u^{-}=w^-=0$ a.e. So, $u=u^+ >0$, $w=w^+ >0$ a.e by Corollary \ref{cc1}. Hence $u$, $w>0$ in $\Om$.
Now using \eqref{s91} with $\phi=u$, $\psi=w$, we obtain that
\begin{align*}
\uw\geq  \io (\la f(x) u_{+}^{1-q}+\mu g(x) w_{+}^{1-q}) dx + \io \bw.
\end{align*}
On the other hand, by the weak lower semi-continuity of the norm, we have that
\begin{align*}
\uw&\leq \liminf_{k\ra\infty}\|(u_k,w_k)\|^2\leq \limsup_{k\ra\infty}\|(u_k,w_k)\|^2\\
&= \io (\la f(x) u_{+}^{1-q}+\mu g(x) w_{+}^{1-q}) dx + \io \bw.
\end{align*}
Thus
\begin{align}\label{s10}
\uw
= \io (\la f(x) u_{+}^{1-q}+\mu g(x) w_{+}^{1-q})dx +  \io \bw.
\end{align}
Consequently, $(u_k,w_k)\ra (u,w)$ in $\h$ and $\ee\in \mc N_{\la,\mu}$. Now from \eqref{s2} it follows that
\begin{align*}
(1+q)\uw& -(\al+\ba-1+q)\io \bw\\
=& (1+q)\io (\la f(x) u_{+}^{1-q}+\mu g(x) w_{+}^{1-q}) dx - (\al+\ba-2) \io \bw>0,
\end{align*}
that is, $\ee\in \mc N_{\la,\mu}^{+}$. \QED
\end{proof}
\begin{Lemma}\label{le10}
Show that $w$ is in fact a positive weak solution of problem $(P_{\la,\mu})$.
\end{Lemma}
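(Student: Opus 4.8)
The plan is to promote the one-sided inequality \eqref{s91} of Corollary \ref{cc1}, which holds only for \emph{nonnegative} test pairs, into a genuine equality valid for \emph{every} $(\phi,\psi)\in\h$; combined with the strict positivity $u,w>0$ in $\Om$ already established in Lemma \ref{le9}, this shows that $\ee$ is a weak solution of $(P_{\la,\mu}^{+})$, hence of $(P_{\la,\mu})$ by the observation following the definition of $(P_{\la,\mu}^{+})$. The standard device for a singular problem is to test \eqref{s91} with the truncation $(u+\e\phi)^{+}$.

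Fix an arbitrary $(\phi,\psi)\in\h$ and $\e>0$. Since $u,w>0$ a.e.\ in $\Om$, the pair $\big((u+\e\phi)^{+},(w+\e\psi)^{+}\big)$ is an admissible nonnegative test pair in \eqref{s91}. Writing $(u+\e\phi)^{+}=(u+\e\phi)+(u+\e\phi)^{-}$, and likewise in the second slot, and using the Nehari identity \eqref{s10} (note $u_{+}=u$, $w_{+}=w$) to cancel the $\e$-independent part, one is left, after dividing by $\e>0$, with
\[
I(\phi,\psi):=\ld u,\phi\rd+\ld w,\psi\rd-\la\io f u^{-q}\phi-\mu\io g w^{-q}\psi-\tfrac{\al}{\al+\ba}\io b u^{\al-1}w^{\ba}\phi-\tfrac{\ba}{\al+\ba}\io b u^{\al}w^{\ba-1}\psi\ \geq\ \tfrac{1}{\e}\big(S_\e+T_\e-B_\e\big),
\]
where $\ld u,\phi\rd$ and $\ld w,\psi\rd$ denote the Gagliardo bilinear pairings appearing in \eqref{s91}, and $S_\e$, $T_\e$, $B_\e$ collect the remainder terms carrying a factor $(u+\e\phi)^{-}$ or $(w+\e\psi)^{-}$ that originate, respectively, from the singular nonlinearity, from the $b$-term, and from the quadratic form.

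The next step is to verify that each remainder is $o(\e)$. One uses that $A_\e:=\{u+\e\phi<0\}$ has $|A_\e|\to0$ as $\e\to0^{+}$ (because $u>0$ a.e.) and that $0\le(u+\e\phi)^{-}\le\e|\phi|$ on $A_\e$ and vanishes off $A_\e$:
\begin{enumerate}
\item[(i)] since $fu^{-q}|\phi|,\ gw^{-q}|\psi|\in L^{1}(\Om)$ by Corollary \ref{cc1}, dominated convergence over $A_\e$ gives $|S_\e|\le C\e\big(\int_{A_\e}fu^{-q}|\phi|\,dx+\int_{A_\e}gw^{-q}|\psi|\,dx\big)=o(\e)$;
\item[(ii)] since $b\in L^{\infty}(\Om)$ and, by H\"older, $u^{\al-1}w^{\ba}|\phi|,\ u^{\al}w^{\ba-1}|\psi|\in L^{1}(\Om)$, one gets $|T_\e|\le\|b\|_{\infty}\e\big(\int_{A_\e}u^{\al-1}w^{\ba}|\phi|\,dx+\int_{A_\e}u^{\al}w^{\ba-1}|\psi|\,dx\big)=o(\e)$;
\item[(iii)] the elementary inequality $\ld v,v^{-}\rd\le-\|v^{-}\|^{2}$ for the Gagliardo form, applied to $v=u+\e\phi$, gives $\ld u,(u+\e\phi)^{-}\rd\le-\|(u+\e\phi)^{-}\|^{2}-\e\ld\phi,(u+\e\phi)^{-}\rd\le\e^{2}\|\phi\|^{2}$, and similarly for the second component, so $B_\e\le\e^{2}\big(\|\phi\|^{2}+\|\psi\|^{2}\big)$.
\end{enumerate}
Consequently $\liminf_{\e\to0^{+}}\e^{-1}(S_\e+T_\e-B_\e)\ge0$, so letting $\e\to0^{+}$ in the displayed inequality yields $I(\phi,\psi)\ge0$ for every $(\phi,\psi)\in\h$. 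Since $(\phi,\psi)\mapsto I(\phi,\psi)$ is linear, replacing $(\phi,\psi)$ by $(-\phi,-\psi)$ forces $I\equiv0$ on $\h$; that is, $\ee$ satisfies the weak formulation of $(P_{\la,\mu}^{+})$, and combined with $u,w>0$ in $\Om$ it is a positive weak solution of $(P_{\la,\mu})$.

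The delicate point is exactly steps (i)--(iii): controlling the correction terms supported on the degenerating set $A_\e$ where $u+\e\phi$ can be negative. The $L^{1}$-integrability of $fu^{-q}\phi$ (and $gw^{-q}\psi$) supplied by Corollary \ref{cc1} is what makes the singular nonlinearity manageable there, while the nonlocality of $(-\De)^{s}$ --- which rules out the compactly supported truncation argument available in the local case --- is absorbed by the convexity-type inequality $\ld v,v^{-}\rd\le-\|v^{-}\|^{2}$ for the Gagliardo seminorm.
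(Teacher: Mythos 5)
Your proof is correct and follows essentially the same route as the paper: test the variational inequality \eqref{s91} with the truncations $((u+\e\phi)_{+},(w+\e\psi)_{+})$, cancel the $\e$-independent part via the Nehari identity \eqref{s10}, show the correction terms supported on the shrinking sets $\{u+\e\phi\le 0\}$, $\{w+\e\psi\le 0\}$ are $o(\e)$, divide by $\e$, let $\e\ra 0$, and conclude by testing with $\pm(\phi,\psi)$. The only genuine difference is in one estimate: where the paper bounds the nonlocal correction by decomposing $Q$ into the eight regions built from $\Om_i$, $\Ga_i$, $\Om^c$ and applying Cauchy--Schwarz on each piece, you dispose of it in one line through the sign inequality $\ld v,v^{-}\rd\le -\|v^{-}\|^{2}$ for the Gagliardo form, which is a cleaner but equivalent treatment.
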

\begin{proof}
Let $(u,w)=(u_1,u_2)$, $(\phi_1,\phi_2)\in\h$ and $\e>0$, then we define \[\Psi(x)=(\Psi_1,\Psi_2)= ((u_1+\e\phi_1)_{+},(u_2+\e\phi_2)_{+})\]
For $i=1,2$, let $\Om =\Om_i\times\Ga_i$  with
\[\Om_i:= \{x\in \Om: u_i(x)+ \e \phi_i(x)>0\}\;\mbox{and}\;\Ga_i:= \{x\in\Om: u_i(x)+ \e \phi_i(x)\leq0\}.\]
Then $\Psi_i|_{\Om_i}(x)= (u_i+\e\phi)_{+}(x)$, and $\Psi_i|_{\Ga_i}(x)= 0$. Decompose
$$Q := (\Om_{i}\times \Om^c)\cup (\Ga_i\times \Om^c)\cup(\Om^c\times \Om_i)\cup(\Om^c\times \Ga_i)\cup(\Ga_i\times \Om_i)\cup(\Om_i\times \Ga_i)\cup(\Om_i\times \Om_i)\cup(\Ga_i\times \Ga_i).$$
Let $ M_i(x,y)= u_i(x,y)((u_i+\e\phi)^{-}(x)- (u_i+\e\phi)^{-}(y))K(x,y)$, where $u_i(x,y)=(u_i(x)-u_i(y))$ and $K(x,y)=\frac{1}{|x-y|^{n+2s}}$. Then we have
{\small\begin{enumerate}
\item[1.] $\int_{\Om_i\times \Om^c} M_i(x,y)dx dy = \int_{\Om^c\times \Om_i} M_i(x,y)dx dy= 0.$
\item[2.] $\int_{\Ga_i\times \Om^c} M_i(x,y)dx dy = -\int_{\Ga_i\times \Om^c} u_i(x)(u_i+\e\phi_i)(x)K(x,y)dxdy.$
\item[3.] $\int_{\Om^c\times \Ga_i} M_i(x,y)dx dy = -\int_{\Om^c\times \Ga_i} u_i(x)(u_i+\e\phi_i)(x)K(x,y)dxdy.$
\item[4.] $\int_{\Ga_i\times \Om_i} M_i(x,y)dx dy = - \int_{\Ga_i\times \Om_i}u_i(x,y)(u_i+\e\phi_i)(x)K(x,y)dx dy .$
\item[5.] $\int_{\Om_i\times \Ga_i} M_i(x,y)dx dy = - \int_{\Om_i\times \Ga_i}u_i(x,y)(u_i+\e\phi_i)(x)K(x,y)dx dy.$
\item[6.] $\int_{\Om_i\times \Om_i} M_i(x,y)dx dy = 0.$
\item[7.] $\int_{\Ga_i\times \Ga_i} M_i(x,y)dx dy = - \int_{\Ga_i\times \Ga_i}u_i(x,y)((u_i+\e\phi_i)(x)- (u_i+\e\phi_i)(y))K(x,y)dx dy.$
\end{enumerate}}
\noi Now relabeling $(\psi_1,\psi_2)=(\Phi,\Psi)$, $(u_1,u_2)=(u,w)$ and $(\phi_1,\phi_2)=(\phi,\psi)$. Then putting $(\Phi,\Psi)$ into \eqref{s9} and using \eqref{s10}, we see that
{\small \begin{align*}
0\leq & \int_{Q}\frac{u(x,y)(\Phi(x)-\Phi(y))+w(x,y)(\Psi(x)-\Psi(y))}{|x-y|^{n+2s}}dxdy- \io (\la f(x) u_{+}^{-q}\Phi+ \mu g(x) w_{+}^{-q}\Psi) dx\\
 &- \frac{\al}{\al+\ba}\io b(x) u_{+}^{\al-1}w_{+}^{\ba}\Phi dx-\frac{\ba}{\al+\ba}\io b(x) u_{+}^{\al}w_{+}^{\ba-1}\Psi dx\\
=& \int_{Q}\frac{u(x,y)((u+\e\phi)(x)- (u+\e\phi)(y))+ w(x,y)((w+\e\psi)(x)- (w+\e\psi)(y))}{|x-y|^{n+2s}}dxdy\\
&\quad+\int_{Q}\frac{u(x,y)((u+\e\phi)^{-}(x)- (u+\e\phi)^{-}(y))+ w(x,y)((w+\e\psi)^{-}(x)- (w+\e\psi)^{-}(y))}{|x-y|^{n+2s}}dxdy\\
&- \int_{\Om}(\la f(x) u_{+}^{-q}(u+\e\phi)+\mu g(x) w_{+}^{-q}(w+\e\psi)) - \int_{\Om}(\la f(x) u_{+}^{-q}(u+\e\phi)^{-}+ \mu g(x) w_{+}^{-q}(w+\e\phi)^{-})\\
 &- \frac{\al}{\al+\ba}\io b(x) u_{+}^{\al-1}w_{+}^{\ba}(u+\e\phi) dx-\frac{\ba}{\al+\ba}\io b(x) u_{+}^{\al}w_{+}^{\ba-1}(w+\e\phi) dx\\
 &- \frac{\al}{\al+\ba}\io b(x) u_{+}^{\al-1}w_{+}^{\ba}(u+\e\phi)^{-} dx-\frac{\ba}{\al+\ba}\io b(x) u_{+}^{\al}w_{+}^{\ba-1}(w+\e\phi)^{-} dx
\end{align*}}
{\small \begin{align*}
=&\e\left(\int_{Q}\frac{u(x,y)(\phi(x)- \phi(y))+w(x,y)(\psi(x)- \psi(y))}{|x-y|^{n+2s}} dxdy - \io (\la f(x) u_{+}^{-q}\phi + \mu g(x)
w_{+}^{-q}\psi)dx\right.\\                                                                                                                             
& \left.- \frac{\al}{\al+\ba}\io b(x) u_{+}^{\al-1}w_{+}^{\ba}\phi -\frac{\ba}{\al+\ba}\io b(x)                                                        
u_{+}^{\al}w_{+}^{\ba-1}\psi\right)+\int_{Q}\frac{|u(x)-u(y)|^{2}+|w(x)-w(y)|^{2}}{|x-y|^{n+2s}} dxdy\\                                                
 &\; +\int_{Q}\frac{u(x,y)((u+\e\phi)^{-}(x)- (u+\e\phi)^{-}(y))+w(x,y)((w+\e\phi)^{-}(x)- (w+\e\phi)^{-}(y))}{|x-y|^{n+2s}}dxdy \\                    
& - \io (\la f(x) w_{+}^{1-q}+ \mu g(x) u_{+}^{1-q}) dx+\la \int_{\Ga_1} f(x) u_{+}^{-q}(u+\e\phi)dx+ \mu \int_{\Ga_2}g(x) w_{+}^{-q}(w+\e\phi) dx\\   
&-\frac{\al}{\al+\ba} \int_{\Ga_1} b(x) u_{+}^{\al-1}w_{+}^{\ba}(u+\e\phi) dx-\frac{\ba}{\al+\ba} \int_{\Ga_2} b(x) u_{+}^{\al}w_{+}^{\ba-1}(w+\e\psi) 
dx                                                                                                                                                     
-  \io b(x) u_{+}^{\al}w_{+}^{\ba} dx \\                                                                                                               
&=\e\left(\int_{Q}\frac{u(x,y)(\phi(x)- \phi(y))+w(x,y)(\psi(x)- \psi(y))}{|x-y|^{n+2s}} dxdy - \io (\la f(x) u_{+}^{-q}\phi + \mu g(x)                
w_{+}^{-q}\psi)dx\right.\\                                                                                                                             
& \left.- \frac{\al}{\al+\ba}\io b(x) u_{+}^{\al-1}w_{+}^{\ba}\phi -\frac{\ba}{\al+\ba}\io b(x) u_{+}^{\al}w_{+}^{\ba-1}\phi                           
\right)-2\int_{\Ga_1\times \Om^c}\frac{u(x)(u+\e\phi)(x)}{|x-y|^{n+2s}}\\                                                                              
&-2\int_{\Ga_2\times \Om^c}\frac{w(x)(w+\e\phi)(x)}{|x-y|^{n+2s}}dxdy- 2\int_{\Ga_1\times \Om_1}\frac{u(x,y)(u+\e\phi)(x)}{|x-y|^{n+2s}}dx dy\\        
&- 2\int_{\Ga_2\times \Om_2}\frac{w(x,y)(w+\e\phi)(x)}{|x-y|^{n+2s}}dx dy-2\int_{\Ga_1\times\Ga_1}\frac{u(x,y)((u+\e\phi)(x)-                          
(u+\e\phi)(y))}{|x-y|^{n+2s}}\\                                                                                                                        
&-2\int_{\Ga_2\times\Ga_2}\frac{w(x,y)((w+\e\phi)(x)- (w+\e\phi)(y))}{|x-y|^{n+2s}}dxdy                                                                
+ \la\int_{\Ga_1} f(x) u_{+}^{-q}(u+\e\phi)dx\\                                                                                                        
&+ \mu \int_{\Ga_2}g(x) w_{+}^{-q}(w+\e\psi)) - \frac{\al}{\al+\ba}\int_{\Ga_1} b(x) u_{+}^{\al-1}w_{+}^{\ba}(u+\e\phi) -                              
\frac{\ba}{\al+\ba}\int_{\Ga_2} b(x) u_{+}^{\al}w_{+}^{\ba-1}(w+\e\psi) \\                                                                               
=&\e\left(\int_{Q}\frac{u(x,y)(\phi(x)- \phi(y))+w(x,y)(\psi(x)- \psi(y))}{|x-y|^{n+2s}} dxdy - \io (\la f(x) u_{+}^{-q}\phi + \mu g(x) w_{+}^{-q}\psi)dx\right.\\
& \left.- \frac{\al}{\al+\ba}\io b(x) u_{+}^{\al-1}w_{+}^{\ba}\phi dx-\frac{\ba}{\al+\ba}\io b(x) u_{+}^{\al}w_{+}^{\ba-1}\phi dx\right) -2\int_{\Ga_1\times \Om^c}\frac{|u(x)|^{2}}{|x-y|^{n+2s}}dx dy\\
&- 2\int_{\Ga_2\times \Om^c}\frac{|w(x)|^{2}}{|x-y|^{n+2s}}dx dy-2\int_{\Ga_1\times \Om_1}\frac{u(x,y)u(x)}{|x-y|^{n+2s}}dx dy-2\int_{\Ga_2\times \Om_2}\frac{w(x,y)w(x)}{|x-y|^{n+2s}}dx dy\\
&-2\int_{\Ga_1\times \Ga_1}\frac{|u(x)-u(y)|^{2}}{|x-y|^{n+2s}}dx dy -2\int_{\Ga_2\times \Ga_2}\frac{|w(x)-w(y)|^{2}}{|x-y|^{n+2s}}dx dy+ \la\int_{\Ga_1} f(x) u_{+}^{-q}(u+\e\phi) dx\\
&+\mu \int_{\Ga_2} g(x) w_{+}^{-q}(w+\e\psi)) - \frac{\al}{\al+\ba}\int_{\Ga_1} b(x) u_{+}^{\al-1}w_{+}^{\ba}(u+\e\phi) - \frac{\ba}{\al+\ba}\int_{\Ga_2} b(x) u_{+}^{\al}w_{+}^{\ba-1}(w+\e\psi) \\
&-2\e\left(\int_{\Ga_1\times \Om^c}\frac{ u(x)\phi(x)}{|x-y|^{n+2s}}dxdy+\int_{\Ga_2\times \Om^c}\frac{w(x)\psi(x)}{|x-y|^{n+2s}}dxdy+ \int_{\Ga_1\times \Om_1}\frac{u(x,y)\phi(x)}{|x-y|^{n+2s}}dx dy\right.\\
&\;+  \int_{\Ga_2\times \Om_2}\frac{ w(x,y)\psi(x)}{|x-y|^{n+2s}}dx dy+\left.\int_{\Ga_1\times \Ga_1}\frac{u(x,y)(\phi(x)- \phi(y))}{|x-y|^{n+2s}}+ \int_{\Ga_2\times \Ga_2}\frac{w(x,y)(\psi(x)- \psi(y))}{|x-y|^{n+2s}}dxdy\right)\\
\leq &\e\left(\int_{Q}\frac{u(x,y)(\phi(x)- \phi(y))+w(x,y)(\psi(x)- \psi(y))}{|x-y|^{n+2s}} dxdy - \io (\la f(x) u_{+}^{-q}\phi + \mu g(x) w_{+}^{-q}\psi)dx\right.\\
& \left.- \frac{\al}{\al+\ba}\io b(x) u_{+}^{\al-1}w_{+}^{\ba}\phi dx-\frac{\ba}{\al+\ba}\io b(x) u_{+}^{\al}w_{+}^{\ba-1}\phi\right)-2\int_{\Ga_1\times \Om_1}\frac{u(x,y)u(x)}{|x-y|^{n+2s}}dx dy
\end{align*}}
{\small\begin{align*}
&-2\int_{\Ga_2\times \Om_2}\frac{w(x,y)w(x)}{|x-y|^{n+2s}}dx dy+ \la\int_{\Ga_1} f(x) u_{+}^{-q}(u+\e\phi)+\mu \int_{\Ga_2} g(x) w_{+}^{-q}(w+\e\psi))
dx\\                                                                                                                                                   
&- \frac{\al}{\al+\ba}\int_{\Ga_1} b(x) u_{+}^{\al-1}w_{+}^{\ba}(u+\e\phi) dx- \frac{\ba}{\al+\ba}\int_{\Ga_2} b(x) u_{+}^{\al}w_{+}^{\ba-1}(w+\e\psi) 
dx\\                                                                                                                                                   
&-2\e\left(\int_{\Ga_1\times \Om^c}\frac{ u(x)\phi(x)}{|x-y|^{n+2s}}dxdy+\int_{\Ga_2\times \Om^c}\frac{w(x)\psi(x)}{|x-y|^{n+2s}}dxdy+                 
\int_{\Ga_1\times \Om_1}\frac{u(x,y)\phi(x)}{|x-y|^{n+2s}}dx dy\right.\\                                                                               
&\;+  \int_{\Ga_2\times \Om_2}\frac{ w(x,y)\psi(x)}{|x-y|^{n+2s}}+\left.\int_{\Ga_1\times \Ga_1}\frac{u(x,y)(\phi(x)- \phi(y))}{|x-y|^{n+2s}}dx dy+    
\int_{\Ga_2\times \Ga_2}\frac{w(x,y)(\psi(x)- \psi(y))}{|x-y|^{n+2s}}dxdy\right)\\                                                                                                                                                                                                                            
\leq&\e\left(\int_{Q}\frac{(u(x)-u(y))(\phi(x)- \phi(y))+(w(x)-w(y))(\psi(x)- \psi(y))}{|x-y|^{n+2s}} dxdy\right.\\
 &- \io(\la f(x) u_{+}^{-q}\phi+ \mu g(x) w_{+}^{-q}\psi) dx \left.- \frac{\al}{\al+\ba}\io b(x) u_{+}^{\al-1}w_{+}^{\ba}\phi 
 dx-\frac{\ba}{\al+\ba}\io b(x) u_{+}^{\al}w_{+}^{\ba-1}\phi\right)\\                                                         
&+2\e\left(\int_{\Ga_1\times \Om_1}\frac{|u(x)-u(y)|^{2}}{|x-y|^{n+2s}}dx dy\right)^{\frac{1}{2}}\left(\int_{\Ga_1\times      
\Om_1}\frac{|\phi(x)|^{2}}{|x-y|^{n+2s}}dx dy\right)^{\frac{1}{2}}\\                                                          
&+2\e\left(\int_{\Ga_2\times \Om_2}\frac{|w(x)-w(y)|^{2}}{|x-y|^{n+2s}}dx dy\right)^{\frac{1}{2}}\left(\int_{\Ga_2\times      
\Om_2}\frac{|\psi(x)|^{2}}{|x-y|^{n+2s}}dx dy\right)^{\frac{1}{2}}\\                                                          
&+2\e\left[\left(\int_{\Ga_1\times \Om^c}\frac{|u(x)|^{2}}{|x-y|^{n+2s}}dxdy\right)^{\frac{1}{2}}\left(\int_{\Ga_1\times      
\Om^c}\frac{|\phi(x)|^{2}}{|x-y|^{n+2s}}dx dy\right)^{\frac{1}{2}}\right.\\                                                   
&+2\e\left[\left(\int_{\Ga_2\times \Om^c}\frac{|w(x)|^{2}}{|x-y|^{n+2s}}dxdy\right)^{\frac{1}{2}}\left(\int_{\Ga_2\times      
\Om^c}\frac{|\psi(x)|^{2}}{|x-y|^{n+2s}}dx dy\right)^{\frac{1}{2}}\right.\\                                                   
 &+ \left(\int_{\Ga_1\times \Om_1}\frac{|u(x)-u(y)|^{2}}{|x-y|^{n+2s}}dx dy\right)^{\frac{1}{2}}\left(\int_{\Ga_1\times \Om_1}\frac{|\phi(x)|^{2}}{|x-y|^{n+2s}}dx dy\right)^{\frac{1}{2}}\\
&+ \left(\int_{\Ga_2\times \Ga_1}\frac{|w(x)-w(y)|^{2}}{|x-y|^{n+2s}}dx dy\right)^{\frac{1}{2}}\left(\int_{\Ga_2\times \Ga_1}\frac{|\psi(x)|^{2}}{|x-y|^{n+2s}}dx dy\right)^{\frac{1}{2}}\\
&\quad+\left.\left({\int_{\Ga_1\times \Ga_1}}\frac{|u(x)-u(y)|^{2}}{|x-y|^{n+2s}}dx dy\right)^{\frac{1}{2}}\left(\int_{\Ga_1\times \Ga_1}\frac{|\phi(x)-\phi(y)|^{2}}{|x-y|^{n+2s}}dx dy\right)^{\frac{1}{2}}\right]\\
&\quad+\left.\left({\int_{\Ga_2\times \Ga_2}}\frac{|w(x)-w(y)|^{2}}{|x-y|^{n+2s}}dx dy\right)^{\frac{1}{2}}\left(\int_{\Ga_2\times \Ga_2}\frac{|\psi(x)-\psi(y)|^{2}}{|x-y|^{n+2s}}dx dy\right)^{\frac{1}{2}}\right]\\
&+ \e \e^{\al} \|b\|_{\infty} \left(\int_{\Ga_1}|\phi|^{\al+\ba}dx\right)^{\frac{\ba}{\al+\ba}}\left(\int_{\Ga_1}(w_+)^{\al+\ba}dx\right)^{\frac{\ba}{\al+\ba}}\\
&+\e \e^{\ba} \|b\|_{\infty}\left(\int_{\Ga_2}(u_+)^{\al+\ba}dx\right)^{\frac{\al}{\al+\ba}} \left(\int_{\Ga_2}|\phi|^{\al+\ba}dx\right)^{\frac{\ba}{\al+\ba}}\\
&-\frac{\e\al}{\al+\ba} \int_{\Ga_1} b(x) u_{+}^{\al-1}w_{+}^{\ba}\phi dx-\frac{\e\ba}{\al+\ba} \int_{\Ga_2} b(x)u_{+}^{\al} w_{+}^{\ba-1}\psi dx.
\end{align*}}
Since the measure of $\Ga_i =\{x\in \Om| (u_i +\e \phi_i)(x)\leq 0\}$ tend to zero as $\e\ra 0$, it follows that
\[\int_{\Ga_i\times \Om_i}\frac{|\phi_i(x)|^{2}}{|x-y|^{n+2s}}dxdy\ra 0,\; \mbox{as}\; \e\ra 0,\] and similarly
\[\int_{\Ga_i\times \Om^c}\frac{|\phi_i(x)|^{2}}{|x-y|^{n+2s}}dx dy, \int_{\Ga_i\times \Ga_i}\frac{|\phi_i(x)-\phi_i(y)|^{2}}{|x-y|^{n+2s}}dx dy,\] $\int_{\Ga_1} b(x) u_{+}^{\al-1}w_{+}^{\ba}\phi dx$ and $\int_{\Ga_2} b(x)u_{+}^{\al} w_{+}^{\ba-1}\psi dx$,  all are tend to $0$ as $\e\ra 0$. Dividing by $\e$ and letting $\e\ra 0$, we obtain
{\small \begin{align*}
\int_{Q}& \frac{(u(x)-u(y)) (\phi(x)-\phi(y))+(w(x)-w(y)) (\psi(x)-\psi(y))}{|x-y|^{n+2s}}dxdy\\
 &- \io (\la f(x) u_{+}^{-q}\phi+\mu g(x) w_{+}^{-q}\psi) dx - \frac{\al}{\al+\ba} \io b(x) u_{+}^{\al-1}w_{+}^{\ba}\phi dx - \frac{\ba}{\al+\ba} \io b(x) u_{+}^{\al}w_{+}^{\ba-1}\psi dx \geq 0
\end{align*}}
and since this holds equally well for $(-\phi,-\psi)$, it follows that $(u,w)$ is indeed a positive weak solution of problem $(P_{\la,\mu}^{+})$
and hence a positive solution of $(P_{\la,\mu})$.\QED
\end{proof}
\begin{Lemma}\label{lm1}
 There exists a minimizing sequence $\{(U_k,W_k)\}$ in $\mc N_{\la,\mu}^{-}$ such that $(U_k,W_k)\ra (U,W)$ strongly in $\mc N_{\la,\mu}^{-}$.
Moreover $(U,W)$ is a positive weak solution of $(P_{\la,\mu})$.
\end{Lemma}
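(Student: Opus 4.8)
The plan is to mimic on the closed set $\mc N_{\la,\mu}^{-}$ the minimization scheme already performed on $\mc N_{\la,\mu}^{+}\cup\mc N_{\la,\mu}^{0}$. Since $\mc N_{\la,\mu}^{-}$ is closed in $Y$ (Lemma \ref{le3}) and $J_{\la,\mu}$ is bounded below and coercive on $\mc N_{\la,\mu}$ (Lemma \ref{le5}), Ekeland's variational principle supplies a sequence $\{(U_k,W_k)\}\subset\mc N_{\la,\mu}^{-}$ satisfying the analogues of \eqref{c1} and \eqref{c2}. Coercivity makes $\{(U_k,W_k)\}$ bounded, so along a subsequence $(U_k,W_k)\rightharpoonup (U,W)$ weakly in $Y$, strongly in $(L^{r}(\Om))^{2}$ for $1\le r<2^{*}_{s}$, and a.e.\ in $\Om$. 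Because $\al+\ba<2^{*}_{s}$ and $b\in L^{\infty}(\Om)$ one gets $\io b(x)(U_k)_{+}^{\al}(W_k)_{+}^{\ba}\,dx\to\io b(x)U_{+}^{\al}W_{+}^{\ba}\,dx$, and combining this with the defining inequality of $\mc N_{\la,\mu}^{-}$ and the gap bound $\|(U_k,W_k)\|\ge A_0$ (Lemma \ref{le2}) yields $\io b(x)U_{+}^{\al}W_{+}^{\ba}\,dx\ge \frac{(1+q)A_0^{2}}{\al+\ba-1+q}>0$, so the weak limit is nontrivial.

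The next step is the $\mc N_{\la,\mu}^{-}$-version of Lemma \ref{le6}: I would show there is $C_2>0$ with $(\al+\ba-1+q)\io b(x)(U_k)_{+}^{\al}(W_k)_{+}^{\ba}\,dx-(1+q)\|(U_k,W_k)\|^{2}\ge C_2$ for $k$ large. On $\mc N_{\la,\mu}$ this quantity equals $(\al+\ba-2)\io b(x)(U_k)_{+}^{\al}(W_k)_{+}^{\ba}\,dx-(1+q)K_{\la,\mu}(U_k,W_k)$, and $K_{\la,\mu}(U_k,W_k)\to K_{\la,\mu}(U,W)$ by continuity of the Nemytskii map $t\mapsto t_{+}^{1-q}$ from $L^{\al+\ba}$ to $L^{(\al+\ba)/(1-q)}$ together with H\"older's inequality and $f,g\in L^{q^{*}}(\Om)$; hence the only thing to rule out is the limiting equality $(\al+\ba-2)\io b(x)U_{+}^{\al}W_{+}^{\ba}\,dx=(1+q)K_{\la,\mu}(U,W)$, which is the analogue of \eqref{s3}, and the chain of estimates of \eqref{e40} applied to $(U,W)$, using $(\la,\mu)\in\Ga$ and $(U,W)\ne(0,0)$, then produces $0<E_{\la,\mu}\|(U,W)\|^{\al+\ba}\le 0$, a contradiction. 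With this bound in hand, Lemma \ref{le4} (now with $\frac{\pa F}{\pa t}((0,0),1)=\phi_{U,W}''(1)<0$, still nonzero so the implicit function theorem applies after the obvious sign changes), Lemma \ref{le7}, Lemma \ref{le8}, Corollary \ref{cc1} and Lemma \ref{le9} carry over essentially verbatim with $\mc N_{\la,\mu}^{+}$ replaced by $\mc N_{\la,\mu}^{-}$: one obtains $\la f(x)U_{+}^{-q}\phi+\mu g(x)W_{+}^{-q}\psi\in L^{1}(\Om)$ for $(\phi,\psi)\in C_{Y}$, the inequality \eqref{s91}, and---testing \eqref{s91} with $(\phi,\psi)=(U^{-},W^{-})$---that $U^{-}=W^{-}=0$, so $U>0$ and $W>0$ a.e.\ in $\Om$.

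Testing \eqref{s91} with $(\phi,\psi)=(U,W)$ gives $\|(U,W)\|^{2}\ge K_{\la,\mu}(U,W)+\io b(x)U_{+}^{\al}W_{+}^{\ba}\,dx$, while weak lower semicontinuity of the norm, the identity $\|(U_k,W_k)\|^{2}=K_{\la,\mu}(U_k,W_k)+\io b(x)(U_k)_{+}^{\al}(W_k)_{+}^{\ba}\,dx$ and the strong convergences above give the reverse inequality; thus $\|(U_k,W_k)\|\to\|(U,W)\|$, and in the Hilbert space $Y$ this forces $(U_k,W_k)\to(U,W)$ strongly, whence $J_{\la,\mu}(U,W)=\inf_{\mc N_{\la,\mu}^{-}}J_{\la,\mu}$. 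Since $\mc N_{\la,\mu}^{-}$ is closed and $\mc N_{\la,\mu}^{0}=\{(0,0)\}$, the nontrivial strong limit $(U,W)$ belongs to $\mc N_{\la,\mu}^{-}$. Finally, running the splitting of $Q$ into $\Om_i\times\Om^{c}$, $\Ga_i\times\Om^{c}$, $\Om^{c}\times\Om_i,\dots$ as in Lemma \ref{le10} for the pair $(U,W)$, and using that \eqref{s91} also holds for $(-\phi,-\psi)$, upgrades the inequality to an equality, so $(U,W)$ is a positive weak solution of $(P_{\la,\mu}^{+})$, hence of $(P_{\la,\mu})$; the gap structure of Lemma \ref{le2} gives $\|(U,W)\|>A_0>A_{\la,\mu}>\|(u,w)\|$, so it is distinct from the $\mc N_{\la,\mu}^{+}$-solution produced in Lemmas \ref{le9}--\ref{le10}, completing the proof of Theorem \ref{th1}.

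The hard part will be, as in the treatment of $\mc N_{\la,\mu}^{+}$, the singular term $\io\la f(x)U_{+}^{-q}\phi\,dx$: it makes $J_{\la,\mu}$ non-differentiable, so no standard constrained Palais--Smale argument is available on $\mc N_{\la,\mu}^{-}$, and one must instead construct admissible competitors inside $\mc N_{\la,\mu}^{-}$ via the implicit function theorem---this is where the quantitative bound $C_2$ above is indispensable, both for the uniform validity of that construction (it plays the role of \eqref{s5}) and for keeping the limit away from $\mc N_{\la,\mu}^{0}$---and then estimate the singular integrals only through Fatou's lemma, obtaining inequalities rather than identities. A secondary, purely bookkeeping, difficulty is that $(-\De)^{s}$ does not preserve the bounded support of test functions, so passing from \eqref{s91} to a genuine weak solution forces the lengthy termwise estimates over the pieces of $Q$ carried out in Lemma \ref{le10}, now for the pair $(U,W)$.
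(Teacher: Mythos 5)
Your proposal follows essentially the same route as the paper: Ekeland's principle on the closed set $\mc N_{\la,\mu}^{-}$, the $\mc N_{\la,\mu}^{-}$-analogue of Lemma \ref{le6} giving the uniform sign bound (the paper's \eqref{s11} and constant $C_4$, your $C_2$), then repeating Lemmas \ref{le7}--\ref{le10} and Corollary \ref{cc1} to obtain a positive weak solution, with the strict inequality finally placing $(U,W)$ in $\mc N_{\la,\mu}^{-}$. The only difference is that you spell out details the paper leaves implicit (nontriviality of the weak limit via the gap bound $A_0$, and the sign of $\frac{\pa F}{\pa t}((0,0),1)$ in the implicit function theorem step), which is consistent with, not divergent from, the paper's argument.
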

\begin{proof}
Using the Ekeland variational principle again, we may find a minimizing sequence $\{(U_k,W_k)\}\subset \mc N_{\la,\mu}^{-}$ for the minimizing problem
 $\inf_{\mc N_{\la,\mu}^{-}}J_{\la,\mu}$ such that for $(U_k,W_k)\rightharpoonup (U,W)$ weakly in $\h$ and pointwise a.e. in $Q$. We can repeat the argument used in Lemma \ref{le6} to derive that when $(\la,\mu)\in \Ga)$
\begin{align}\label{s11}
(1+q)\io (\la f(x) U_{+}^{1-q}+ \mu g(x)W_{+}^{1-q})dx - (\al+\ba-2)\io b(x)U_{+}^{\al} W_{+}^{\ba} dx<0
\end{align}
which yields
\[(1+q)\io (\la f(x) (U_{k})_{+}^{1-q}+ \mu g(x)(W_{k})_{+}^{1-q})dx -(\al+\ba-2)\io b(x) (U_{k})_{+}^{\al}(W_k)_{+}^{\ba}dx\leq -C_4\]
for $k$ sufficiently large and a suitable positive constant $C_4$. At this point we may proceed exactly as in Lemmas \ref{le7}, \ref{le8}, \ref{le9}, \ref{le10} and corollary \ref{cc1}, we conclude that $U$, $W>0$ is the required positive weak solution of problem $(P_{\la,\mu}^{+})$.
In particular $(U,W)\in \mc N_{\la,\mu}$. Moreover from \eqref{s11} it follows that
\begin{align*}
&(1+q)\|(U,W)\|^2 -(\al+\ba-1+q)\io b(x)U_{+}^{\al} W_{+}^{\al} dx\\
=&(1+q)\left[ K_{\la,\mu}(U,W) + \io b(x)U_{+}^{\al} W_{+}^{\ba} dx\right]- (\al+\ba-1+q)\io b(x)U_{+}^{\al} W_{+}^{\ba} dx\\
=&(1+q)K_{\la,\mu}(U,W) -(\al+\ba-2)\io b(x) U_{+}^{\al}W_{+}^{\ba} dx<0,
\end{align*}
that is $(U,W)\in\mc N_{\la,\mu}^{-}$.\QED
\end{proof}

\noi {\bf Proof of the Theorem \ref{th1}:} From Lemmas \ref{le10}, \ref{lm1} and \ref{le2}, we can conclude that the problem $(P_{\la,\mu})$ has at least two positive weak solutions $\ee\in\mc N_{\la,\mu}^{+}$, $(U,W)\in\mc N_{\la,\mu}^{-}$ with $\|(U,W)\|>\|\ee\|$ for any $(\la,\mu)\in\Ga$. \QED




\noindent{\bf Acknowledgements:} The author's research is supported
by National Board for Higher Mathematics, Govt. of India, grant
number: 2/40(2)/2015/R$\&$D-II/5488.


\end{document}